\definecolor{my_color}{rgb}{0,0.5,0.5}
\definecolor{MIXT}{rgb}{0.4,0.3,0.6}
\definecolor{mixt}{rgb}{0.5,0.3,0.2}
\definecolor{sin}{rgb}{0,0.5,0.5}
\definecolor{darkblue}{rgb}{0,0.1,0.8}
\definecolor{redi}{rgb}{0.6,0,0.4}
\numberwithin{equation}{section}
\font\tencyr=wncyr10 
\font\tencyi=wncyi10 
\font\tencysc=wncysc10 
\def\rus{\tencyr\cyracc}
\def\rusi{\tencyi\cyracc}
\def\rusc{\tencysc\cyracc}
\newtheorem{thm}{Theorem}[section]
\newtheorem{lm}[thm]{Lemma}
\newtheorem{cl}[thm]{Corollary}
\newtheorem{prop}[thm]{Proposition}
\theoremstyle{remark}
\newtheorem{rmk}[thm]{Remark}
\theoremstyle{definition}
\newtheorem{ex}[thm]{Example}
\newtheorem{df}{Definition}
\newcommand{\eus}{\EuScript}
\newcommand {\ah}{{\mathfrak a}}
\newcommand {\be}{{\mathfrak b}}
\newcommand {\ce}{{\mathfrak c}}
\newcommand {\g}{{\mathfrak g}}
\newcommand {\h}{{\mathfrak h}}
\newcommand {\ka}{{\mathfrak k}}
\newcommand {\el}{{\mathfrak l}}
\newcommand {\me}{{\mathfrak m}}
\newcommand {\p}{{\mathfrak p}}
\newcommand {\q}{{\mathfrak q}}
\newcommand {\es}{{\mathfrak s}}
\newcommand {\te}{{\mathfrak t}}
\newcommand {\ut}{{\mathfrak u}}
\newcommand {\z}{{\mathfrak z}}
\newcommand {\spn}{{\mathfrak{sp}}_{2n}}
\newcommand {\sono}{{\mathfrak{so}}_{2n+1}}
\newcommand {\sone}{{\mathfrak{so}}_{2n}}
\newcommand {\sov}{{\mathfrak{so}}(\eus V)}
\newcommand {\esi}{\varepsilon}
\newcommand {\ap}{\alpha}
\newcommand {\lb}{\lambda}
\newcommand {\vp}{\varphi}
\newcommand {\BV}{{\mathbb V}}
\newcommand {\BQ}{{\mathbb Q}}
\newcommand {\BN}{{\mathbb N}}
\newcommand {\BZ}{{\mathbb Z}}
\newcommand {\ad}{{\mathrm{ad\,}}}
\newcommand {\cha}{{\mathrm{char\,}}}
\newcommand {\hot}{{\mathsf{ht}}}
\newcommand {\ind}{{\mathsf{ind}}}
\newcommand {\Ker}{{\mathrm{Ker\,}}}
\newcommand {\rk}{{\mathsf{rk\,}}}
\newcommand {\spin}{{\mathsf{Spin}}}
\newcommand {\tr}{{\mathsf{tr}}}
\newcommand {\tri}{\mathfrak{sl}_2}
\newcommand {\GR}[2]{{\textrm{{\sf\bfseries #1}}}_{#2}}
\newcommand {\ff}{\boldsymbol{f}}
\newcommand {\ov}{\overline}
\newcommand {\un}{\underline}
\newcommand {\beq}{\begin{equation}}
\newcommand {\eeq}{\end{equation}}
\renewcommand{\le}{\leqslant}
\renewcommand{\ge}{\geqslant}
\newcommand{\bbk}{\Bbbk}
\newenvironment{E6}[6]{
{\footnotesize\begin{tabular}{@{}c@{}}
{#1}---{#2}---\lower3.6ex\vbox{\hbox{\hspace{.6ex}{#3}\rule{0ex}{3.7ex}}
\hbox{\hspace{1ex}\rule{.15ex}{1.5ex}\rule{0ex}{1.4ex}}\hbox{{#6}\strut}}---{#4}---{#5}
\end{tabular}}}
\newenvironment{E7}[7]{
{\footnotesize\begin{tabular}{@{}c@{}}
{#1}---{#2}---{#3}---\lower3.6ex\vbox{\hbox{\hspace{.6ex}{#4}\rule{0ex}{3.7ex}}
\hbox{\hspace{1ex}\rule{.15ex}{1.5ex}\rule{0ex}{1.4ex}}\hbox{{#7}\strut}}---{#5}---{#6}
\end{tabular}}}
\newenvironment{E8}[8]{%
{\footnotesize\begin{tabular}{@{}c@{}}
{#1}---{#2}---{#3}---{#4}---\lower3.6ex\vbox{\hbox{\hspace{.6ex}{#5}\rule{0ex}{3.7ex}}
\hbox{\hspace{1ex}\rule{.15ex}{1.5ex}\rule{0ex}{1.4ex}}\hbox{{#8}\strut}}---{#6}---{#7}
\end{tabular}}}
\begin{document}
\setlength{\parskip}{2pt plus 4pt minus 0pt}
\hfill {\scriptsize December 1, 2019} 
\vskip1.5ex

\title[Casimir elements and Levi subalgebras]%
{Casimir elements associated with Levi subalgebras of simple Lie algebras and their applications} 
\author{Dmitri I. Panyushev}
\address[]{Institute for Information Transmission Problems of the R.A.S., Bolshoi Karetnyi per. 19,  
127051 Moscow,  Russia}
\email{panyushev@iitp.ru}
\keywords{$\BZ$-grading, root system, isotropy representation, abelian subspace}
\subjclass[2010]{17B20, 17B22, 15A75}
\thanks{This research is partially supported by the R.F.B.R. grant {\rus N0} 16-01-00818}

\begin{abstract}
Let $\g$ be a simple Lie algebra, $\h$ a Levi subalgebra, and $\eus C_\h\in \eus U(\h)$ the Casimir 
element defined via the restriction of the Killing form on $\g$ to $\h$. We study 
$\eus C_{\h}$-eigenvalues in $\g/\h$ and related $\h$-modules. Without loss of generality, one may 
assume that $\h$ is a maximal Levi. Then $\g$ is equipped with the natural $\BZ$-grading 
$\g=\bigoplus_{i\in\BZ}\g(i)$ such that $\g(0)=\h$ and  $\g(i)$ is a simple 
$\h$-module for $i\ne 0$.
We give explicit formulae for the $\eus C_\h$-eigenvalues in each $\g(i)$, $i\ne 0$ and relate 
eigenvalues of $\eus C_\h$ in $\bigwedge^\bullet\g(1)$ to the dimensions of abelian subspaces of 
$\g(1)$. Then we prove that if  $\ah\subset\g(1)$ is abelian, 
whereas $\g(1)$ is not, then $\dim\ah\le \dim\g(1)/2$. Moreover, if $\dim\ap=(\dim\g(1))/2$,
then $\ah$ has an abelian complement.
The $\BZ$-gradings of height $\le 2$ are closely related to involutions of $\g$, and we provide a 
connection of our theory to (an extension of) the ``strange formula'' of Freudenthal--de Vries.
\end{abstract}
\maketitle

\tableofcontents
\section*{Introduction}

\noindent
Let $G$ be a simple algebraic group with Lie algebra $\g$, $\eus U(\g)$ the enveloping algebra, and 
$\Phi$ the Killing form on $\g$. If $\h\subset\g$ is a reductive subalgebra, then $\Phi\vert_\h$ is 
non-degenerate and $\me:=\h^\perp$ is a complementary $\h$-submodule of $\g$, i.e., 
$\g=\h\oplus\me$. Using $\Phi\vert_\h$, one defines 
the Casimir element $\eus C_\h\in \eus U(\h)$, and our goal is to study $\eus C_{\h}$-eigenvalues  in 
$\me$ and related $\h$-modules. In~\cite{jlms01}, we proved that {\sf (i)} the $\eus C_\h$-eigenvalues 
in $\me$ do not exceed $1/2$ and {\sf (ii)} if $\h$ is the fixed-point subalgebra of an involution, i.e., 
$[\me,\me]\subset\h$, then $\eus C_\h$ acts scalarly on $\me$, as $\frac{1}{2}{\sf id_\me}$. First, we
prove a complement to it. Namely, if $\eus C_\h$ does have an eigenvalue $1/2$ in $\me$, then 
$[\me,\me]\subset\h$ and thereby `$1/2$' is the only $\eus C_\h$-eigenvalue on $\me$.
\\ \indent
Then we stick to the case in which $\h$ is a Levi subalgebra of $\g$. Let 
$\te\subset\h$ be a Cartan subalgebra and $\Delta$ (resp. $\Delta_\h$) the root system of 
$(\g,\te)$ (resp. $(\h,\te)$). Let $\be_\h$ be a Borel subalgebra of $\h$ containing $\te$ and $\be$ 
a Borel subalgebra of $\g$ such that $\be\cap\h=\be_\h$. This yields the sets of positive roots 
$\Delta^+_\h\subset\Delta^+\subset\Delta$ and decomposition $\g=\me^-\oplus\h\oplus\me^+$, where 
$\be=\be_\h\oplus\me^+$. Then $\p:=\h\oplus\me^+$ is a standard parabolic subalgebra and
$\Delta^+=\Delta^+_\h\cup \Delta(\me^+)$, where $\Delta(\me^+)$ is the set of $\te$-weights of 
$\me^+=\p^{\sf nil}$.
Let $\Pi$ be the set of simple roots in $\Delta^+$ and $\Pi_\h:=\Pi\cap\Delta^+_\h$.
If $k=\#(\Pi\cap \Delta(\me^+))$, then $\g$ is equipped with a natural $\BZ^k$-grading.
While studying $\eus C_\h$-eigenvalues in $\me$, one may assume that $\h$ is a maximal Levi, 
i.e., $k=1$, see Section~\ref{subs:Z-grad-versus} for details.
For $\Pi\cap \Delta(\me^+)=\{\ap\}$, the corresponding $\BZ$-grading 
is called the $(\BZ,\ap)$-{\it grading}. Let $\g=\bigoplus_{i\in\BZ}\g_\ap(i)$ denote this grading, where
$\h=\g_\ap(0)$ and $\me^+=\bigoplus_{i\ge 1}\g_\ap(i)=:\g_\ap({\ge}1)$. In this case, $\ap$ is the lowest
weight of the simple $\g_\ap(0)$-module $\g_\ap(1)$. Moreover, {\bf each} $\g(i)$, $i\ne 0$, is a simple 
$\g(0)$-module~\cite[Chap.\,3,\,\S3.5]{t41},\,\cite[Theorem\,0.1]{ko10}. Then we write
$\eus C_\ap(0)$, $\be_\ap(0)$, $\p_\ap$ in place of $\eus C_{\g_\ap(0)}$, $\be_{\g_\ap(0)}$, $\p$,
respectively. 
\\ \indent
Using the partition of $\Delta^+(\me)$ associated with the $(\BZ,\ap)$-grading,
we obtain explicit formulae for the $\eus C_\ap(0)$-eigenvalue in any $\g_\ap(i)$, $i\ne 0$.
Let $\vp_\ap$ be the fundamental weight of $\g$ corresponding to $\ap$ and $\Delta_\ap(i)$ the set of roots of $\g_\ap(i)$. The sum of all elements of 
$\Delta_\ap(i)$, denoted $|\Delta_\ap(i)|$, is a multiple of $\vp_\ap$, i.e., $|\Delta_\ap(i)|=q_\ap(i)\vp_\ap$ and $q_\ap(i)\in\BN$.
Hence $|\Delta_\ap({\ge}1)|=(\sum_{i\ge 1}q_\ap(i))\vp_\ap=:q_\ap\vp_\ap$ is the sum of all roots in the 
nilradical of $\p_\ap$. Set $r_\ap:=\|\theta\|^2/\|\ap\|^2\in \{1,2,3\}$, where $\theta\subset\Delta^+$ is the 
highest root, and let $h^*$ be the {\it dual Coxeter number} of $\g$. 
Let $\gamma_\ap(k)$ denote the $\eus C_\ap(0)$-eigenvalue on $\g_\ap(k)$.

\begin{thm}     \label{thm:gamma(k)}
We have $\gamma_\ap(k)=\displaystyle\frac{k}{2h^*r_\ap}\sum_{i\ge 1}q_\ap(ki)$.
\emph{(In particular, $\gamma_\ap(1)=
q_\ap/2h^*r_\ap$.)}
\end{thm}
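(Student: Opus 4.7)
The plan is to compute $\gamma_\ap(k)$ using the standard Casimir-scalar formula on a simple $\h$-module combined with the decomposition $2\rho_\h = 2\rho - q_\ap\vp_\ap$. Since $\g_\ap(k)$ is simple as an $\h$-module, $\eus C_\h$ acts by the scalar $\gamma_\ap(k) = \Phi^*(\mu_k, \mu_k + 2\rho_\h) = \Phi^*(\nu_k, \nu_k - 2\rho_\h)$, where $\mu_k$ (resp.\ $\nu_k$) is the highest (resp.\ lowest) weight of $\g_\ap(k)$. Normalizing by the Weyl-invariant form $(\cdot,\cdot)_\circ$ with $(\theta,\theta)_\circ = 2$, one has $\Phi^* = (2h^*)^{-1}(\cdot,\cdot)_\circ$ and $\|\ap\|_\circ^2 = 2/r_\ap$. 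Since every $\lb \in \Delta_\ap(k)$ has $\ap$-coefficient $k$ and $\vp_\ap$ is $(\cdot,\cdot)_\circ$-orthogonal to $\Pi_\h$, one gets the key identity $(\lb, \vp_\ap)_\circ = k/r_\ap$.

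For $k=1$ the argument is direct: $\ap$ is itself the lowest weight of $\g_\ap(1)$, because $\ap - \gamma \notin \Delta$ for every $\gamma \in \Pi_\h$ (as $\ap$ is simple). Substituting into the lowest-weight formula yields $\gamma_\ap(1) = \Phi^*(\ap, \ap - 2\rho_\h)$. Using $2\rho_\h = 2\rho - q_\ap\vp_\ap$ with the standard identities $(\ap, \rho)_\circ = \|\ap\|_\circ^2/2 = 1/r_\ap$ and $(\ap, \vp_\ap)_\circ = 1/r_\ap$, one finds $\Phi^*(\ap, \ap-2\rho) = 0$, so $\gamma_\ap(1) = q_\ap\Phi^*(\ap, \vp_\ap) = q_\ap/(2h^*r_\ap)$, matching the $k=1$ case (as $\sum_{i\ge 1}q_\ap(1\cdot i) = q_\ap$).

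For $k \ge 2$ the lowest weight $\nu_k$ is no longer uniformly describable, so the target identity needs reformulation. Let $\gR_k := \sum_{i\ge 1}|\Delta_\ap(ki)| = \bigl(\sum_{i\ge 1}q_\ap(ki)\bigr)\vp_\ap$, which is $W_\h$-invariant. The theorem is equivalent to $\gamma_\ap(k) = \Phi^*(k\ap, \gR_k)$, since the right-hand side evaluates to $\tfrac{k}{2h^*r_\ap}\sum_{i\ge 1}q_\ap(ki)$. Combining with $\gamma_\ap(k) = \Phi^*(\mu_k, \mu_k + 2\rho_\h)$ and $\Phi^*(\mu_k, \vp_\ap) = k/(2h^*r_\ap)$ reduces the theorem to the orthogonality
\[
\bigl(\mu_k, \mu_k + 2\rho_\h - \gR_k\bigr)_\circ = 0.
\]

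Proving this orthogonality relation is the main obstacle; it is not an immediate consequence of Weyl-group symmetry. A natural strategy is to interpret $\gR_k = 2\rho_{\gH_k} - 2\rho_\h$, where $\gH_k := \bigoplus_{k\mid j}\g_\ap(j) \subset \g$ is the reductive subalgebra fixed by the order-$k$ automorphism of $\g$ associated to the grading, and then use the characterization of $\mu_k$ as the $\Pi_\h$-dominant root in $\Delta_\ap(k)$; alternatively, a case-by-case verification across the classification of $(\BZ,\ap)$-gradings of simple Lie algebras would settle the identity.
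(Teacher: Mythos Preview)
Your $k=1$ argument is correct and essentially identical to the paper's (Theorem~\ref{thm:s-znach-1}). For $k\ge 2$, however, you stop short of a proof: you correctly reduce the statement to the orthogonality
\[
(\mu_k,\mu_k+2\rho_\h-\gR_k)_\circ=0,
\]
but then only sketch two possible strategies without executing either. Saying that ``a case-by-case verification\dots would settle the identity'' is not a proof, and the first suggestion (``interpret $\gR_k=2\rho_{\gH_k}-2\rho_\h$\dots and then use the characterization of $\mu_k$'') is left as a hint.

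The good news is that your first suggestion works almost immediately. Rewrite the target in lowest-weight form: since $\gamma_\ap(k)=\Phi^*(\nu_k,\nu_k-2\rho_\h)$ and $(\nu_k,\vp_\ap)_\circ=k/r_\ap$ just as for $\mu_k$, the required identity becomes $(\nu_k,\nu_k-2\rho_\h-\gR_k)_\circ=0$. Substituting $\gR_k=2\rho_{\gH_k}-2\rho_\h$ reduces this to $(\nu_k,\nu_k)_\circ=(\nu_k,2\rho_{\gH_k})_\circ$. But $\nu_k$, the unique minimal root of $\Delta_\ap(k)$, is a \emph{simple} root of the semisimple algebra $\gH_k=\bigoplus_{k\mid j}\g_\ap(j)$: any decomposition $\nu_k=\gamma_1+\gamma_2$ into positive roots of $\gH_k$ would force one summand into $\Delta^+_\ap(0)$, contradicting minimality of $\nu_k$ in $\Delta_\ap(k)$. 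For a simple root $\beta$ of any semisimple algebra one always has $(\beta,2\rho)=(\beta,\beta)$, and the orthogonality follows. (Your highest-weight version then follows by applying $w_{\ap,0}$, which fixes $\gR_k$ and negates $\rho_\h$.)

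Completed in this way, your route is genuinely different from---and more direct than---the paper's. The paper also passes to $\g^{[k]}=\gH_k$, but then splits off its simple ideal $\ka$, applies the $k=1$ result to $(\ka,\beta)$ with respect to the Killing form $\Phi_\ka$, and converts back to $\g$ via the transition factor $T=h^*(\ka)/\bigl(h^*{\cdot}\ind(\ka\hookrightarrow\g)\bigr)$, the relation $\vp_\ap=k\,\frac{(\ap,\ap)}{(\beta,\beta)}\,\ov\vp_\beta$, and the Dynkin-index identity $\ind(\ka\hookrightarrow\g)=(\theta,\theta)/(\ov\theta,\ov\theta)$. Your argument bypasses all of that machinery by working throughout with the single form on $\te^*$ induced from $\g$; the only missing ingredient was recognising $\nu_k$ as simple in $\gH_k$.
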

\noindent
We also obtain a series of relations between numbers $\gamma_\ap(i), q_\ap(i), \dim\g_\ap(i)$.
For instance, if $d_\ap=\max\{i\mid \Delta_\ap(i)\ne \varnothing\}$, then
$\gamma_\ap(d_\ap)=1-d_\ap\gamma_\ap(1)$ and $q_\ap+q_\ap(d_\ap)=2h^*r_\ap/d_\ap$.

Let $\delta_\ap(k)$ be the {\bf maximal} $\eus C_\ap(0)$-eigenvalue in $\bigwedge^k\g_\ap(1)$, so that 
$\delta_\ap(1)=\gamma_\ap(1)$. We relate the values $\{\delta_\ap(i)\mid i=1,2,\dots\}$ to dimensions of 
abelian subspaces of $\g_\ap(1)$ as follows.

\begin{thm}    \label{thm:delta(k)}
For each $k=1,2.\dots,\dim\g_\ap(1)$, we have $\delta_\ap(k)\le k\gamma_\ap(1)$. This upper bound 
is attained for a given $k$ if and only if\/ $\g_\ap(1)$ contains a $k$-dimensional abelian subspace.
\end{thm}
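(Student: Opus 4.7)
The plan is to use the standard coproduct identity for $\eus C_\h$ and reduce everything to the base case $k=2$. Writing $\Omega_\h=\sum_i x_i\otimes y_i\in\h^{\otimes 2}$ for $\Phi|_\h$-dual bases $\{x_i\},\{y_i\}$, the coproduct gives $\Delta(\eus C_\h)=\eus C_\h\otimes 1+1\otimes\eus C_\h+2\Omega_\h$; since $\eus C_\h$ acts by $\gamma_\ap(1)\cdot\mathrm{id}$ on $V:=\g_\ap(1)$, iteration yields on $V^{\otimes k}$
\[
\eus C_\h\;=\;k\gamma_\ap(1)\cdot\mathrm{id}\;+\;2\sum_{i<j}\Omega_\h^{(ij)}.
\]
Because $\bigwedge^k V$ is the sign isotype of $S_k$ in $V^{\otimes k}$, conjugation by the appropriate transposition shows that all restrictions $\Omega_\h^{(ij)}|_{\bigwedge^k V}$ coincide with $\Omega_\h^{(12)}|_{\bigwedge^k V}$; via the inclusion $\bigwedge^k V\hookrightarrow\bigwedge^2 V\otimes V^{\otimes(k-2)}$ this operator acts as $\Omega_\h|_{\bigwedge^2 V}\otimes\mathrm{id}$. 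Therefore the upper bound $\delta_\ap(k)\le k\gamma_\ap(1)$ reduces to the base-case inequality $\Omega_\h\le 0$ on $\bigwedge^2 V$, equivalently $\delta_\ap(2)\le 2\gamma_\ap(1)$.

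The heart of the argument---and the main obstacle---is this base-case lemma. The plan is to identify $\Omega_\h|_{\bigwedge^2 V}=-c\cdot B^{*}B$ for some positive $c$, where $B\colon\bigwedge^2 V\to\g_\ap(2)$ is the $\h$-equivariant bracket and $B^{*}$ its adjoint under $\h$-invariant inner products. Concretely, using the expansion
\[
\Omega_\h(e_\nu\wedge e_\sigma)\;=\;\sum_i[x_i,e_\nu]\wedge[y_i,e_\sigma]
\]
and splitting the dual basis into Cartan and root-vector parts, I would apply the invariance of $\Phi$ together with the standard decomposition $\eus C_\g=\eus C_\h+2\sum_{\alpha\in\Delta(\me^+)}f_{-\alpha}e_\alpha+2t_{\rho_\me}$ to rewrite the result in terms of $[e_\nu,e_\sigma]\in\g_\ap(2)$. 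This yields both non-positivity of $\Omega_\h$ and the identification $\ker(\Omega_\h|_{\bigwedge^2 V})=\ker B$, i.e.\ the span of wedges $e_\nu\wedge e_\sigma$ with $[e_\nu,e_\sigma]=0$. As a sanity check, on the $\g_\ap(2)$-summand the predicted eigenvalue $\tfrac12(\gamma_\ap(2)-2\gamma_\ap(1))$ is non-positive by Theorem~\ref{thm:gamma(k)}.

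For the equality characterization, if $\g_\ap(1)$ has a $k$-dimensional abelian subspace then Borel's fixed-point theorem, applied to the $H$-action on the closed subvariety of the Grassmannian $\mathrm{Gr}(k,V)$ parametrizing abelian subspaces, yields a $T$-stable example, spanned by distinct root vectors $e_{\nu_1},\dots,e_{\nu_k}$ with $\nu_i+\nu_j\notin\Delta$ for all $i\ne j$. By the base-case lemma each $\Omega_\h^{(ij)}$ annihilates $\omega:=e_{\nu_1}\wedge\dots\wedge e_{\nu_k}$, whence $\eus C_\h\omega=k\gamma_\ap(1)\omega$ and $\delta_\ap(k)\ge k\gamma_\ap(1)$; combined with the upper bound, equality holds. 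Conversely, if $\delta_\ap(k)=k\gamma_\ap(1)$, the eigenspace is $H$-stable and contains a $T$-weight vector $\xi$; the identity $\sum_{i<j}\Omega_\h^{(ij)}\xi=0$ together with non-positivity of each term forces $\Omega_\h^{(ij)}\xi=0$ for every pair, and unpacking $\xi=\sum_S c_S\,\omega_S$ in the $T$-weight basis of $\bigwedge^k V$ extracts a $k$-subset $S\subset\Delta_\ap(1)$ of pairwise non-additive roots, i.e.\ an abelian $k$-dimensional subspace. The trickiest steps will be the base-case identification $\Omega_\h=-cB^{*}B$ (the sign and constant require careful bookkeeping of the Killing-form invariance) and verifying in the converse direction that at least one individual $\omega_S$ in the decomposition of $\xi$ is supported on an abelian set, rather than only the linear combination satisfying the vanishing.
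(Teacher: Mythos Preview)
Your coproduct strategy and reduction to the base case $k=2$ are essentially what the paper does: Proposition~\ref{prop:d_1} gives the identity you want, namely $\Omega_\h(y\wedge z)=-\textsl{d}_1([y,z])$ on $\bigwedge^2\g_\ap(1)$, which is exactly $\Omega_\h=-\textsl{d}_1\circ\partial$ with $\partial=B$. Two issues, however, separate your outline from a complete proof.

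First, your phrases ``non-positivity'' and ``$B^{*}$'' need an honest inner product. The Killing form gives $\textsl{d}_1=-\partial^{t}$, but $\Phi$ is indefinite, so $\partial^{t}\partial$ has no sign. The paper is explicit that one must pass to a compact real form of $\g$ and the associated positive-definite Hermitian inner product on $\bigwedge^\bullet\g$ (following Kostant~\cite{ko65}); only with respect to that form does $\Omega_\h\vert_{\bigwedge^2 V}$ become $-\partial^{\dagger}\partial\le 0$, and only then do your ``each term $\le 0$, sum $=0$, hence each $=0$'' arguments go through.

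Second, and more seriously, your converse extraction has the gap you yourself flag, and it is a real one. From $(\partial\otimes\mathrm{id})\xi=0$ for a $T$-weight vector $\xi=\sum_S c_S\,\omega_S$, the resulting relation lives in $\g_\ap(2)\otimes V^{\otimes(k-2)}$ and involves brackets from \emph{different} $S$'s that can cancel; nothing forces a single $S$ to be abelian. The paper's repair is not to unpack $\xi$ at all but to invoke Kostant's lemma (\cite[Prop.~4]{ko65}, \cite[Thm.~3.2]{jlms01}): the maximal $\eus C_\ap(0)$-eigenvalue on $\bigwedge^k\g_\ap(1)$ is always attained on a \emph{decomposable} polyvector $v_1\wedge\dots\wedge v_k$. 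This is proved by a variational argument on the Grassmannian using the Hermitian form and is the one non-formal ingredient. Once you have a decomposable eigenvector at the top eigenvalue $k\gamma_\ap(1)$, your cross-term computation combined with positivity yields $[v_a,v_b]=0$ for all $a,b$, and you are done. Without this decomposability step your converse direction does not close.
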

\noindent
Similar results are obtained earlier for abelian subspaces of $\g$~\cite{ko65} and for abelian subspaces
related to certain $\BZ_m$-gradings of $\g$~\cite{jlms01}.
One of the applications is that if $\g_\ap(1)$ is not abelian (which exactly means that $d_\ap>1$) 
and $\ah\subset\g_\ap(1)$ is an abelian subspace, then $\dim\ah\le (1/2)\dim\g_\ap(1)$. A related
result is that if there is an abelian subspace $\ah\subset\g_\ap(1)$ of dimension $(1/2)\dim\g_\ap(1)$, 
then {\sf\bfseries (1)} $\ah$ has an abelian complement; {\sf\bfseries (2)} all the numbers 
$\{\delta_\ap(i)\}$ can explicitly be computed. It appears here that the sequence 
$\delta_\ap(1),\dots,\delta_\ap(m)$ has an interesting behaviour that is governed by a relation
between $q_\ap$ and $q_\ap(1)$. We also provide some methods for constructing abelian subspaces 
of $\g_\ap(1)$ and point out the maximal dimension of an abelian subspace in $\g_\ap(1)$ for {\bf all} 
$(\BZ,\ap)$-gradings. The latter is related to a recent work of Elashvili et al.~\cite{e-j-k}.

For an involution $\sigma$ of $\g$, let $\g=\g_0\oplus\g_1$ be the associated $\BZ_2$-grading and
$\eus C_0\in \eus U(\g_0)$ the Casimir element defined via $\Phi\vert_{\g_0}$. Then the 
$\eus C_0$-eigenvalue on $\g_1$ equals $1/2$~\cite{jlms01}. As $\g_1$ is an orthogonal $\g_0$-module, 
there is a natural $\g_0$-module $\spin(\g_1)$ related to the exterior algebra of $\g_1$~\cite{tg01},
see Section~\ref{sect:FdV} for details.
Although $\spin(\g_1)$ is often reducible, $\eus C_0$ acts scalarly on it, and the corresponding 
eigenvalue, $\gamma_{\spin(\g_1)}$,  is computed in~\cite[Theorem~7.7]{tg01}, cf. Section~\ref{sect:FdV}. Here we obtain 
another uniform expression.

\begin{thm}    \label{thm:1/16}
For any involution (=$\BZ_2$-grading) of $\g$, one has  $\gamma_{\spin(\g_1)}=(\dim\g_1)/16$.
\end{thm}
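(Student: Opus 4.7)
The plan is to reduce the theorem to a one-line identity involving norms of Weyl vectors, and then to collapse it via the Freudenthal--de Vries strange formula together with the trace identity forced by the earlier fact $\eus C_0\vert_{\g_1}=\tfrac12\odin$.

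The starting point is the formula for $\gamma_{\spin(\g_1)}$ established in \cite[Theorem 7.7]{tg01} (equivalently, obtained by evaluating Kostant's quadratic Dirac-operator identity for the symmetric pair $(\g,\g_0)$ on the trivial $\g$-module): if $\rho$ (resp.\ $\rho_0$) is the half-sum of positive roots of $\g$ (resp.\ $\g_0$) for a compatible choice of $\te\subset\g_0\subset\g$ and Borels, and the norms on $\te^*$ are induced by $\Phi$ and $\Phi\vert_{\g_0}$ respectively, then
\[
\gamma_{\spin(\g_1)} \;=\; \|\rho\|^2 - \|\rho_0\|^2.
\]
The classical strange formula in its Killing-form guise gives $\|\rho\|^2 = (\dim\g)/24$.

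To evaluate $\|\rho_0\|^2$, decompose $\g_0 = \z\oplus\bigoplus_{i}\g_0^{(i)}$ into centre plus simple factors; this is orthogonal for $\Phi\vert_{\g_0}$, and since $\rho_0$ vanishes on $\z$ one has $\|\rho_0\|^2 = \sum_i\|\rho_0^{(i)}\|^2_{\Phi\vert_{\g_0^{(i)}}}$. On each simple factor $\Phi\vert_{\g_0^{(i)}} = c_i\,\Phi_{\g_0^{(i)}}$ for a positive constant $c_i$ (the index of the embedding), so applying the strange formula inside $\g_0^{(i)}$ and rescaling dual norms yields $\|\rho_0^{(i)}\|^2_{\Phi\vert_{\g_0^{(i)}}} = \dim\g_0^{(i)}/(24\,c_i)$.

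The sum $\sum_i\dim\g_0^{(i)}/c_i$ is pinned down by a trace identity. From the standard relation $\Phi\vert_{\g_0}(x,y) = \Phi_{\g_0}(x,y) + \tr(xy\vert_{\g_1})$ (immediate from $\g=\g_0\oplus\g_1$ and additivity of trace) and the result of \cite{jlms01} recalled in the introduction that $\eus C_0$ acts on $\g_1$ as $\tfrac12\odin$, one has $\tr_{\g_1}(\eus C_0) = \tfrac12\dim\g_1$. Expanding this trace in a $\Phi\vert_{\g_0}$-selfdual basis respecting the decomposition, together with $\Phi_{\g_0}\vert_{\g_0^{(i)}} = c_i^{-1}\Phi\vert_{\g_0^{(i)}}$ and $\Phi_{\g_0}\vert_\z = 0$, yields
\[
\sum_i \dim\g_0^{(i)}/c_i \;=\; \dim\g_0 - \tfrac12\dim\g_1.
\]

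Putting the pieces together, $\|\rho_0\|^2 = (2\dim\g_0-\dim\g_1)/48$, and therefore
\[
\gamma_{\spin(\g_1)} \;=\; \frac{\dim\g}{24} - \frac{2\dim\g_0-\dim\g_1}{48} \;=\; \frac{2\dim\g_1+\dim\g_1}{48} \;=\; \frac{\dim\g_1}{16}.
\]
The main hurdles are matching normalizations in the initial spin-Casimir identity (since $\eus C_0$ is defined via $\Phi\vert_{\g_0}$, not the intrinsic Killing form of $\g_0$) and handling the centre $\z(\g_0)$ uniformly across inner and Hermitian-type involutions; once these conventions are fixed, the strange formula and the $\tfrac12\odin$-trace identity combine to give the answer with no case analysis.
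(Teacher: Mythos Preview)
Your proof is correct and follows essentially the same route as the paper's general argument (Theorem~\ref{thm:general-g_1}): start from $\gamma_{\spin(\g_1)}=(\rho,\rho)-(\rho_0,\rho_0)$, apply the strange formula to $\g$ and to each simple factor of $\g_0$, and use the $\eus C_0$-eigenvalue $1/2$ on $\g_1$ to compute the weighted sum $\sum_i \dim\g_0^{(i)}/c_i$. The only cosmetic difference is that you derive the trace identity via $\Phi\vert_{\g_0}=\Phi_{\g_0}+\tr_{\g_1}$, whereas the paper invokes $\tr_\g(\eus C_0)=\dim\g_0$ directly from Proposition~\ref{prop:lms01}{\sf (i)} and splits it as $\tr_{\g_0}(\eus C_0)+\tr_{\g_1}(\eus C_0)$; the content is identical.
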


\noindent 
The {\bf inner} involutions are closely related to $(\BZ,\ap)$-gradings with $d_\ap\le 2$~\cite{kac}, and 
in this case we give a proof of Theorem~\ref{thm:1/16} that uses properties of 
$\eus C_\ap(0)$-eigenvalues. However, the argument that exploits $(\BZ,\ap)$-gradings does not 
extend to outer involutions. Our general proof invoke the "strange formula" of Freudenthal--de Vries, 
which asserts that $(\rho,\rho)=(\dim\g)/24$~\cite[47.11]{FdV}, where $2\rho=|\Delta^+|$. 
On the other hand, the adjoint representation of $\g$ occurs as the isotropy representation related to 
the involution $\tau$ of $\g\dotplus\g$ with $\tau(x,y)=(y,x)$. Although $\g\dotplus\g$ is not simple, one 
can state an analogue of Theorem~\ref{thm:1/16} for $(\g\dotplus\g,\tau)$, and we prove that that 
analogue is equivalent to the "strange formula". It is important here that, for the orthogonal $\g$-module 
$\g$, one has $\spin(\g)=2^{\rk\g/2]}\eus V_\rho$, where $\eus V_\rho$ is the simple $\g$-module with highest weight $\rho$. 
This result of Kostant appears in~\cite[p.\,358]{ko61}, cf. also \cite[Sect.\,5]{ko97}. To a great extent,
our general study of `$\spin(V)$' in \cite{tg01} was motivated by that observation.

The paper is structured as follows. In Section~\ref{sect:prelim-Cas}, we recall basic facts on Casimir 
elements, the Dynkin index of a simple subalgebra of $\g$, and $\BZ$-gradings. In 
Section~\ref{sect:some-prop}, we discuss some properties of $(\BZ,\ap)$-gradings and numbers
$\{q_\ap\}_{\ap\in\Pi}$. Section~\ref{sect:5/2} contains our results
on Theorem~\ref{thm:gamma(k)} and the $\eus C_\ap(0)$-eigenvalues in 
$\g_\ap(i)$.  
In Sections~\ref{sect:3} and \ref{sect:applic}, we study maximal eigenvalues of 
$\eus C_\ap(0)$ in $\g_\ap(0)$-modules $\bigwedge^i \g_\ap(1)$ ($1\le i\le\dim\g_\ap(1)$) and their relationship to abelian subspaces of 
$\g_\ap(1)$. Section~\ref{sect:FdV} is devoted to connections between $\BZ_2$-gradings and
$(\BZ,\ap)$-gradings with $d_\ap\le 2$. 
Here we discuss the "strange formula" and a generalisation of it to the $\BZ_2$-graded situation. In Appendix~\ref{sect:App}, we gather the 
tables of eigenvalues $\gamma_\ap(i)$ and numbers $q_\ap(i)$ for all $(\BZ,\ap)$-gradings.
\\ \indent
The ground field $\bbk$ is algebraically closed and $\cha\bbk=0$. We use `$\dotplus$' to denote the direct sum of Lie algebras.

\section{Casimir elements, Levi subalgebras and gradings}
\label{sect:prelim-Cas}

\noindent
Unless otherwise stated, $\g$ is a simple Lie algebra with a fixed triangular decomposition
$\g=\ut\oplus\te\oplus\ut^-$ and $\Phi$ is the Killing form on $\g$. Then $\Delta$ is the root system of 
$(\g,\te)$ and $\Delta^+$ is the set of positive roots corresponding to $\be=\te\oplus\ut$. Let 
$\Pi=\{\ap_1,\dots,\ap_n\}$ be a set of simple roots in $\Delta^+$, $\{\vp_1,\dots,\vp_n\}$  the 
corresponding set of fundamental weights, and $\theta$ the {\it highest root\/} in $\Delta^+$. We also 
write $\vp_\ap$ for the fundamental weight corresponding to $\ap\in\Pi$.

\subsection{The Casimir element associated with a reductive subalgebra}
Let  $\h$ be a reductive algebraic subalgebra of $\g$. Then $\Phi\vert_\h$ is 
non-degenerate~\cite[Chap.\,1,\,\S\,6.3]{t41} and one defines the Casimir element $\eus C_\h$. 
Namely, if $\{e_i\}$ and $\{e'_i\}$ are the dual bases of $\h$ 
w.r.t{.} $\Phi\vert_\h$, then $\eus C_\h:=\sum_{i=1}^{\dim\h} e'_i e_i\in\eus U(\h)$.
As is well known, $\eus C_\h$ is a well-defined quadratic element of the centre of $\eus U(\g)$ and 
the eigenvalues of $\eus C_\h$ on finite-dimensional $\h$-modules are non-negative rational numbers, cf.~\cite[Chap.\,3,\S\,2.9]{t41}.
We have $\g=\h\oplus\me$, where $\me=\h^\perp$ is an $\h$-module. 

\begin{prop}[cf.\,{\cite[Theorem\,2.3]{jlms01}}]    \label{prop:lms01}
{\sf (i)} \  $\tr_\g(\eus C_\h)=\dim\h$; \\
{\sf (ii)} \   If $x,y\in\h$, then $\Phi(\eus C_\h(x),y)=\tr_\h(\ad(x)\ad(y))$; \\
{\sf (iii)} \   Any $\eus C_\h$-eigenvalue in $\me$ is at most $1/2$. Moreover, if this bound is attained and
$\me_{1/2}\ne 0$ is the corresponding eigenspace, then $[\me_{1/2},\me]\subset \h$. \\
{\sf (iv)} \ If\/ $\g=\h\oplus\me$ is a $\BZ_2$-grading (i.e., $[\me,\me]\subset\h$), then $\me=\me_{1/2}$.
\end{prop}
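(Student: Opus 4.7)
The plan is to dispatch (i) and (ii) by short direct computations and then derive (iii) and (iv) from a single identity. For (i), the defining formula $\Phi(x,y)=\tr_\g(\ad x\,\ad y)$ of the Killing form yields $\tr_\g(\eus C_\h)=\sum_i\Phi(e_i',e_i)=\dim\h$ via dual-basis orthogonality. For (ii), I would expand $\Phi(\eus C_\h x,y)=\sum_i\Phi([e_i',[e_i,x]],y)$, move one bracket across by $\Phi$-invariance, and use the Jacobi identity to rewrite $[[e_i,x],y]$ as $[e_i,[x,y]]+[x,[y,e_i]]$. The first summand produces $\tr_\h(\ad[x,y])=0$ since the adjoint trace vanishes on a reductive Lie algebra, and the second produces the desired $\tr_\h(\ad x\,\ad y)$.

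The heart of (iii) and (iv) is a two-way evaluation of $\Phi(v,\eus C_\h v)$ for $v\in\me$. Direct expansion plus invariance of $\Phi$ give
\[
\Phi(v,\eus C_\h v) \;=\; -\sum_i \Phi([e_i,v],[e_i',v]).
\]
On the other hand, (i) applied with $\h=\g$ says $\eus C_\g$ acts as the identity on $\g$; concatenating the dual bases $\{e_i,e_i'\}$ of $\h$ and $\{f_j,f_j'\}$ of $\me$ yields dual bases of $\g$, whence
\[
\Phi(v,v) \;=\; -\sum_i \Phi([e_i,v],[e_i',v]) \;-\; \sum_j \Phi([f_j,v],[f_j',v]).
\]
I would decompose $[f_j,v]=\alpha_j+\beta_j$ with $\alpha_j\in\h$ and $\beta_j\in\me$. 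The operator $\ad v$ is $\Phi$-skew, and its $\h\to\h$ block vanishes because $[v,\h]\subset\me$; hence its $\h\to\me$ and $\me\to\h$ blocks are negative transposes of each other, and the cyclic identity $\tr(TT^*)=\tr(T^*T)$ gives $\sum_j\Phi(\alpha_j,\alpha_j')=\sum_i\Phi([e_i,v],[e_i',v])$.

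Subtracting the two displayed identities and specialising to a $\eus C_\h$-eigenvector $v\in\me$ of eigenvalue $\lambda$ produces the clean relation
\[
\sum_j\Phi(\beta_j,\beta_j') \;=\; (2\lambda-1)\,\Phi(v,v).
\]
Passing to a compact real form of $\g$ makes $-\Phi$ positive definite, so the left side is a non-negative sum of squared norms of the $\me$-components of $[f_j,v]$ and $-\Phi(v,v)>0$, forcing $2\lambda-1\le 0$; this is the inequality of (iii). The new content is the equality case: if $\lambda=1/2$, positivity forces every $\beta_j=0$, i.e.\ the $\me\to\me$ block of $\ad v$ vanishes, i.e.\ $[v,w]\in\h$ for all $w\in\me$; linearity in $v$ extends this to the whole eigenspace, yielding $[\me_{1/2},\me]\subset\h$. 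Part (iv) is then immediate from the same relation, since the hypothesis $[\me,\me]\subset\h$ forces $\beta_j=0$ for every $v\in\me$, so $(2\lambda-1)\Phi(v,v)=0$ on every eigenspace of $\me$ and non-degeneracy of $\Phi\vert_\me$ gives $\lambda=1/2$ throughout $\me$.

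The main obstacle is the content of the middle paragraph: establishing, in a purely algebraic way, the trace identity equating $\sum_j\Phi(\alpha_j,\alpha_j')$ with $\sum_i\Phi([e_i,v],[e_i',v])$ via the $\Phi$-skewness of $\ad v$, and then invoking a compact real form (equivalently, an associated positive-definite Hermitian form) to turn the algebraic identity into a genuine inequality. Once that positivity input is in hand, the bound $\lambda\le 1/2$, the new characterisation of its equality case, and (iv) all drop out of the single identity $\sum_j\Phi(\beta_j,\beta_j')=(2\lambda-1)\Phi(v,v)$.
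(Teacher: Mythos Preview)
The paper does not supply a proof of this proposition; it is quoted from the author's earlier article \cite[Theorem\,2.3]{jlms01}, so there is nothing here to compare against directly. Your argument is correct and is in fact the standard one (and essentially that of \cite{jlms01}): the trace identity $\sum_j\Phi(\alpha_j,\alpha_j')=\sum_i\Phi([e_i,v],[e_i',v])$ is exactly $\tr_\h(C^*C)=\tr_\me(CC^*)$ for the block $C=\ad v\vert_\h:\h\to\me$, using $B=-C^*$ from the $\Phi$-skewness of $\ad v$; combined with $\Phi(v,\eus C_\h v)=-\sum_i\Phi([e_i,v],[e_i',v])$ this yields your key identity $\sum_j\Phi(\beta_j,\beta_j')=(2\lambda-1)\Phi(v,v)$, from which (iii) and (iv) follow as you say.

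One small point worth tightening: for the positivity step, rather than ``passing to a compact real form'' it is cleaner to fix a Cartan involution $\tau$ of $\g$ with $\tau(\h)=\h$ (hence $\tau(\me)=\me$) and work with the positive-definite Hermitian form $\langle x,y\rangle=-\Phi(x,\tau y)$. Choosing the $f_j$ orthonormal for $\langle\,,\,\rangle$ forces $f_j'=-\tau(f_j)$ and hence $\beta_j'=-\tau(\beta_j)$, so $\sum_j\Phi(\beta_j,\beta_j')=\sum_j\langle\beta_j,\beta_j\rangle\ge 0$ is literally a sum of squared norms; since $\eus C_\h$ commutes with $\tau$, its eigenspaces are $\tau$-stable and contain vectors with $\Phi(v,v)\ne 0$. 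This makes both the inequality and the equality characterisation in (iii) watertight.
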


The following is a useful complement to the above properties.

\begin{prop}      \label{prop:m(1/2)}
Given $\g=\h\oplus\me$ and $\eus C_\h$ as above, suppose that  
$\me_{1/2}\ne 0$. Then  $\me_{1/2}=\me$ and thereby the decomposition $\g=\h\oplus\me$ is a $\BZ_2$-grading.
\end{prop}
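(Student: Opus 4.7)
Assume $\me_{1/2}\ne 0$. Because $\eus C_\h$ commutes with $\ad(\h)$ and is $\Phi$-symmetric on $\me$, I would first set up the $\Phi$-orthogonal eigenspace decomposition
\[
\me=\me_{1/2}\oplus\me_{<1/2},\qquad \me_{<1/2}:=\bigoplus_{c<1/2}\me_c,
\]
with each $\me_c$ an $\ad(\h)$-submodule; the orthogonality comes from self-adjointness of $\eus C_\h$ and the non-degeneracy of $\Phi|_\me$.

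\smallskip
\emph{Step 1: sharpen Proposition~\ref{prop:lms01}(iii) to $[\me_{1/2},\me_c]=0$ for every $c<1/2$.}\ By (iii), $[x,y]\in\h$ for $x\in\me_{1/2},\ y\in\me_c$. Pairing with $h\in\h$ and using invariance,
\[
\Phi([x,y],h)=\Phi(y,[h,x])=0,
\]
because $[h,x]\in\me_{1/2}$ is $\Phi$-orthogonal to $y\in\me_c$ by the eigenspace decomposition above. Since $[x,y]\in\h$ is then $\Phi|_\h$-orthogonal to all of $\h$ and $\Phi|_\h$ is non-degenerate, $[x,y]=0$.

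\smallskip
\emph{Step 2: $\pi_{\me_{1/2}}[\me_{<1/2},\me_{<1/2}]=0$.}\ For $y_1,y_2\in\me_{<1/2}$ and $x\in\me_{1/2}$, the same invariance trick gives
\[
\Phi([y_1,y_2],x)=\Phi(y_1,[y_2,x])=0
\]
by Step~1. Non-degeneracy of $\Phi|_{\me_{1/2}}$ closes this step.

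\smallskip
\emph{Step 3: produce a $\BZ_2$-grading of $\g$.}\ With $\g^+:=\h\oplus\me_{<1/2}$ and $\g^-:=\me_{1/2}$, the three containments $[\g^+,\g^+]\subset\g^+$, $[\g^+,\g^-]\subset\g^-$, $[\g^-,\g^-]\subset\g^+$ follow directly from Steps~1--2 and from $[\me_{1/2},\me_{1/2}]\subset\h$ (given by (iii)). Hence $\g=\g^+\oplus\g^-$ is a $\BZ_2$-grading; equivalently, $\sigma$ acting as $+1$ on $\g^+$ and $-1$ on $\g^-$ is a Lie algebra involution of $\g$.

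\smallskip
\emph{Step 4 (closing the argument): conclude $\me_{<1/2}=0$.}\ Applying (iv) to the grading of Step~3 gives $\eus C_{\g^+}=\tfrac12\mathrm{id}$ on $\g^-=\me_{1/2}$. Since $\h$ and $\me_{<1/2}$ are $\Phi$-orthogonal inside $\g^+$, we have the decomposition of operators $\eus C_{\g^+}=\eus C_\h+\eus C_{\me_{<1/2}}$; the identity $\eus C_\h=\tfrac12\mathrm{id}$ on $\me_{1/2}$ is recovered consistently. To actually force $\me_{<1/2}=0$, the plan is to combine this consistency with the quadratic identity
\[
(1-2\gamma)\,\Phi(x,x)=\sum_k\Phi\!\bigl(\pi_\me[f_k,x],\pi_\me[f_k',x]\bigr),\qquad x\in\me_\gamma,
\]
(valid for $\Phi$-dual bases $\{f_k\},\{f_k'\}$ of $\me$) that underlies the proof of (iii), specialised to $x\in\me_{<1/2}$ and with the sum restricted to $f_k\in\me_{<1/2}$ by Step~1. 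I expect this last identification, together with the rigidity imposed by the involution of Step~3 on $\g$, to produce the required contradiction; executing this final comparison is the principal technical hurdle of the argument.
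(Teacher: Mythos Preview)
Your Steps~1--3 are correct and in fact cleaner than the paper's route: you obtain $[\me_{1/2},\me_{<1/2}]=0$ directly from $\Phi$-invariance, without first arguing that $\h\oplus\me_{1/2}$ is reductive, and you package the outcome as a $\BZ_2$-grading $\g=\g^+\oplus\g^-$ with $\g^-=\me_{1/2}$.

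Step~4, however, is a genuine gap. The quadratic identity you invoke will not produce a contradiction: for $x\in\me_{<1/2}$ it merely re-expresses $(1-2\gamma)\Phi(x,x)$ in terms of brackets inside $\me_{<1/2}$, and nothing prevents that from being nonzero. The ``rigidity imposed by the involution'' is not a concrete statement, and Proposition~\ref{prop:lms01}(iv) applied to $\g^+$ only confirms what you already know about the action on $\me_{1/2}$. What is missing is the use of \emph{simplicity of $\g$}, which you never invoke. From your $\BZ_2$-grading, set
\[
I:=\g^-+[\g^-,\g^-]=\me_{1/2}+[\me_{1/2},\me_{1/2}].
\]
A direct Jacobi check shows $[\g^\pm,I]\subset I$, so $I$ is a nonzero ideal of $\g$; hence $I=\g$. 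But $[\me_{1/2},\me_{1/2}]\subset\h$ by (iii), so $I\subset\h\oplus\me_{1/2}$, forcing $\me_{<1/2}=0$.

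The paper reaches the same endpoint by a slightly different path: it shows $\tilde\h:=\h\oplus\me_{1/2}$ is reductive, lets $\hat\h$ be the subalgebra generated by $\me_{1/2}$, splits off a complementary ideal $\es$ inside $\tilde\h$, and then checks that $\hat\h$ is an ideal of $\g$ itself. Both arguments hinge on simplicity; yours would be shorter once Step~4 is replaced by the ideal $I$ above.
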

\begin{proof}
Write $\me=\me_{1/2}\oplus \tilde\me$, where $\tilde\me$ is the sum of all other eigenspaces of 
$\eus C_\h$ in $\me$. One has $\Phi(\eus C_\h(x),y)=\Phi(x,\eus C_\h(y))$ for all $x,y$. Hence
$\Phi(\me_{1/2},\tilde\me)=0$ and $\Phi$ is non-degenerate on $\tilde \h:=\h\oplus \me_{1/2}$.
Therefore, $\tilde\h$ is reductive and $\tilde\me={\tilde\h}^\perp$ is a $\tilde\h$-module. On the other
hand, $[\me_{1/2},\tilde\me]\subset \h$, see Prop.~\ref{prop:lms01}(iii). Hence $[\me_{1/2},\tilde\me]=0$.
Let $\hat\h$ be the subalgebra of $\tilde\h$ generated by $\me_{1/2}$. Then
$[\h,\hat\h]\subset \hat\h$ and also $[\me_{1/2},\hat\h]\subset\hat\h$, i.e., $\hat\h$ is an ideal of $\tilde\h$.
We can write $\tilde\h=\hat\h\oplus\es$, where $\es$ is a complementary ideal. Then
$\g=\es\oplus\hat\h\oplus\tilde\me$, $[\es,\hat\h]=0$, and $[\hat\h,\tilde\me]=0$. Therefore
$\hat\h$ is an ideal of $\g$. Thus $\hat\h=\g$ and $\es=\tilde\me=0$. 
\end{proof}

For $\h=\g$, one obtains the usual Casimir element $\eus C=\eus C_\g\in \eus U(\g)$.
Let $(\ ,\ )$ denote the {\it canonical bilinear form\/} on $\te^*$, i.e., 
one induced by the restriction of $\Phi$ to $\te$, see~\cite[Chap.\,6,\,\S\,1, n$^o$\,12]{bour} for its 
properties.  If $\eus V_\lb$ is a simple $\g$-module with highest weight  $\lb$, then $\eus C$ acts on 
$\eus V_\lb$ scalarly with eigenvalue $(\lb,\lb+2\rho)$~\cite[Chap.\,3, Prop.\,2.4]{t41}.
Since $\eus C(x)=x$ for any $x\in \g$, this means that $(\theta,\theta+2\rho)=1$. The latter is
equivalent to that $(\theta,\theta)=1/h^*$, where $h^*$ is the dual Coxeter number of $\g$,
cf. e.g.~\cite[1.1]{jlms01}.  And the "strange formula" of Freudenthal--de Vries asserts that 
$(\rho,\rho)=(\dim\g)/24$, see~\cite[47.11]{FdV}.

\subsection{The transition factor and the Dynkin index}    
\label{subs:transit-factor}
Let $\ka\subset\g$ be a {\bf simple} subalgebra and $\Phi_\ka$ the Killing form on $\ka$. Then
$\Phi\vert_\ka$ is proportional to $\Phi_\ka$, i.e., there is $F\in\BQ$ such that 
$\Phi(x,x)=F\cdot \Phi_\ka(x,x)$ for any $x\in\ka$. The transition factor $F$ can be expressed via the other known objects. Consider an invariant bilinear form $(\ {|}\ )_\g$ on $\g$, normalised as follows. Let 
$\langle\ {,}\ \rangle_\g$ be the induced $W$-invariant bilinear form on $\te^*$. Following Dynkin,
we then require that
$\langle\theta,\theta\rangle_\g=2$; and likewise for $(\ {|}\ )_\ka$ and $\langle\ {,}\ \rangle_\ka$.

\begin{df}[cf.~{\cite[n$^o$\,7]{dy}}]     \label{def:D-ind}
The {\it Dynkin index\/} of a simple subalgebra $\ka$ in $\g$ is defined to be 
$\ind(\ka\hookrightarrow\g):=\displaystyle\frac{(x|x)_\g}{(x|x)_\ka}$ \ for $x\in\ka$.
\end{df}

The following simple assertion is left to the reader. For a non-degenerate symmetric bilinear form 
$\Psi$ on $\BV$, let $\Psi^*$ denote the induced bilinear form on $\BV^*$.
\begin{lm}            
\label{lm:dve-formy}
If\/ $\Psi_1$ and $\Psi_2$ are two such forms and $\Psi_1=f\Psi_2$ for some $f\in \bbk^\times$, 
then  $\Psi_2^*=f\Psi_1^*$.
\end{lm}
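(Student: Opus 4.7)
The plan is to unwind the definition of the induced dual form by means of the musical isomorphism $\Psi^{\sharp}:\BV\to\BV^{*}$, $v\mapsto \Psi(v,\cdot)$. By definition, $\Psi^{*}$ is the unique bilinear form on $\BV^{*}$ for which $\Psi^{\sharp}$ is an isometry, i.e.
\[
\Psi^{*}\bigl(\Psi^{\sharp}v,\,\Psi^{\sharp}w\bigr)=\Psi(v,w)\qquad (v,w\in\BV),
\]
equivalently $\Psi^{*}(\xi,\eta)=\Psi\bigl((\Psi^{\sharp})^{-1}\xi,\,(\Psi^{\sharp})^{-1}\eta\bigr)$ for $\xi,\eta\in\BV^{*}$.

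From $\Psi_{1}=f\Psi_{2}$ I get at once $\Psi_{1}^{\sharp}=f\Psi_{2}^{\sharp}$, hence $(\Psi_{1}^{\sharp})^{-1}=f^{-1}(\Psi_{2}^{\sharp})^{-1}$. Plugging this into the formula for $\Psi_{1}^{*}$ and pulling out scalars from both arguments of $\Psi_{1}=f\Psi_{2}$ yields
\[
\Psi_{1}^{*}(\xi,\eta)=f\cdot f^{-2}\,\Psi_{2}\bigl((\Psi_{2}^{\sharp})^{-1}\xi,\,(\Psi_{2}^{\sharp})^{-1}\eta\bigr)=f^{-1}\Psi_{2}^{*}(\xi,\eta),
\]
which is the assertion $\Psi_{2}^{*}=f\Psi_{1}^{*}$.

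If one prefers a one-line basis-dependent version, fix a basis of $\BV$, let $A_{1},A_{2}$ be the Gram matrices of $\Psi_{1},\Psi_{2}$, so $A_{1}=fA_{2}$. The Gram matrix of $\Psi_{j}^{*}$ in the dual basis is $A_{j}^{-1}$, and inverting $A_{1}=fA_{2}$ gives $A_{1}^{-1}=f^{-1}A_{2}^{-1}$, i.e. the same conclusion. There is no real obstacle here; the only point to watch is the direction of the scaling, which is inverted when passing to the dual space, so the factor ends up on the opposite side of the equation from where the reader might na\"\i vely place it.
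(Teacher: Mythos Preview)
Your proof is correct. The paper itself leaves this lemma to the reader without supplying an argument, so there is nothing to compare against; your musical-isomorphism computation (and the equivalent Gram-matrix one-liner) is exactly the kind of verification intended.
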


\noindent Using this, we give a formula for the transition factor $F$
between $\Phi$ and $\Phi_\ka$ 
or, rather, the transition factor $T$ between the induced canonical bilinear forms $(\ ,\ )$ on $\te^*$ and $(\ ,\ )_\ka$ on $\te^*_\ka$, where $\te_\ka$ is a suitable Cartan subalgebra of $\ka$ and we regard $\te^*_\ka$ as subspace of $\te^*$. 

\begin{prop}     \label{prop:transition}
{\sf (i)} \ The transition factor between $(\ ,\ )$ and $(\ ,\ )_\ka$ \ is 
$T=\displaystyle \frac{1}{F}=\frac{h^*(\ka)}{h^*{\cdot} \ind(\ka\hookrightarrow\g)}$.
\\
{\sf (ii)} \ Furthermore, $\ind(\ka\hookrightarrow\g)=\displaystyle \frac{(\theta,\theta)}{(\ov{\theta},\ov{\theta})}$, where $\ov{\theta}$ is the highest root of\/  $\ka$.
\end{prop}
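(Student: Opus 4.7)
The plan is to reduce everything to a single comparison of the two canonical $W$-invariant forms, using Lemma~\ref{lm:dve-formy} and the identity $(\theta,\theta)=1/h^*$ recalled in Subsection~1.1.

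First I would pin down the proportionality constant between the Killing form and the canonical form on $\g$. Since $\Phi$ and $(\ {|}\ )_\g$ are both invariant nondegenerate symmetric forms on the simple algebra $\g$, one has $\Phi = c_\g \,(\ {|}\ )_\g$ for some scalar $c_\g$. Passing to the induced forms on $\te^*$, Lemma~\ref{lm:dve-formy} gives $(\ ,\ ) = (1/c_\g)\langle\ ,\ \rangle_\g$, and evaluating at $\theta$ via $(\theta,\theta)=1/h^*$ together with $\langle\theta,\theta\rangle_\g = 2$ forces $c_\g = 2h^*$. The analogous argument applied to $\ka$ yields $\Phi_\ka = 2h^*(\ka)(\ {|}\ )_\ka$.

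Next I would substitute these identities into the defining relation $\Phi\vert_\ka = F\,\Phi_\ka$. For any $x \in \ka$, this becomes $2h^*(x|x)_\g = 2F h^*(\ka)(x|x)_\ka$, so Definition~\ref{def:D-ind} immediately gives $\ind(\ka\hookrightarrow\g) = Fh^*(\ka)/h^*$, i.e.\ $F = h^*\cdot\ind(\ka\hookrightarrow\g)/h^*(\ka)$. Restricting $\Phi\vert_\ka = F\Phi_\ka$ to $\te_\ka$ and invoking Lemma~\ref{lm:dve-formy} once more shows $(\ ,\ )_\ka = F\,(\ ,\ )$ on $\te_\ka^*$, so the transition factor in the opposite direction is $T = 1/F$; this proves (i). For (ii), I would apply the identity $(\theta,\theta)=1/h^*$ to $\ka$ itself to obtain $(\bar\theta,\bar\theta)_\ka = 1/h^*(\ka)$, deduce $(\bar\theta,\bar\theta) = T(\bar\theta,\bar\theta)_\ka = 1/(Fh^*(\ka))$, and conclude $(\theta,\theta)/(\bar\theta,\bar\theta) = Fh^*(\ka)/h^* = \ind(\ka\hookrightarrow\g)$.

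The only genuine subtlety is keeping track of which form scales by $f$ and which by $1/f$ when Lemma~\ref{lm:dve-formy} is applied, since dualisation inverts the scaling, and making the identification of $\te_\ka^*$ as a subspace of $\te^*$ coherent so that $\bar\theta$ has an unambiguous meaning on both sides. Once those conventions are fixed, both assertions collapse to short arithmetic.
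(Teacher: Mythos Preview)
Your argument is correct and follows essentially the same approach as the paper: both proofs hinge on the identity $\Phi = 2h^*\,(\ {|}\ )_\g$ obtained from Lemma~\ref{lm:dve-formy} and $(\theta,\theta)=1/h^*$, together with its analogue for $\ka$. The only organisational difference is that the paper computes $T$ directly as the telescoping product
\[
T=\frac{(\nu,\nu)}{\langle\nu,\nu\rangle_\g}\cdot\frac{\langle\nu,\nu\rangle_\g}{\langle\nu,\nu\rangle_\ka}\cdot\frac{\langle\nu,\nu\rangle_\ka}{(\nu,\nu)_\ka}
=\frac{1}{2h^*}\cdot\frac{1}{\ind(\ka\hookrightarrow\g)}\cdot 2h^*(\ka),
\]
whereas you first solve for $F$ on the Lie-algebra side and then invoke $T=1/F$; for part~(ii) the paper gives a slightly more direct computation via the Dynkin-normalised forms that does not rely on part~(i), but your route through $(\bar\theta,\bar\theta)=T/h^*(\ka)$ is equally valid.
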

\begin{proof}
{\sf (i)}  Using Lemma~\ref{lm:dve-formy} and Def.~\ref{def:D-ind}, we notice that $T=
\displaystyle\frac{1}{F}$ and
$\displaystyle
\ind(\ka\hookrightarrow\g)=\frac{\langle\nu,\nu\rangle_\ka}{\langle\nu,\nu\rangle_\g}$ for
any $\nu\in\te^*_\ka$.
Since $(\theta,\theta)=1/h^*$ and $\langle\theta,\theta\rangle_\g=2$, we have
$(\ ,\ )=2h^*\langle\ ,\ \rangle_\g$ and likewise for two forms on $\te^*_\ka$. Then for any 
$\nu\in\te^*_\ka\subset\te^*$, we obtain
\[
    T=\frac{(\nu,\nu)}{(\nu,\nu)_\ka}=\frac{(\nu,\nu)}{\langle\nu,\nu\rangle_\g}{\cdot}
    \frac{\langle\nu,\nu\rangle_\g}{\langle\nu,\nu\rangle_\ka}{\cdot} 
    \frac{\langle\nu,\nu\rangle_\ka}{(\nu,\nu)_\ka}=
 \frac{1}{2h^*}\frac{1}{\ind(\ka\hookrightarrow\g)}{\cdot}2h^*(\ka)
 =\frac{h^*(\ka)}{h^*{\cdot} \ind(\ka\hookrightarrow\g)}.  
\]
{\sf (ii)} \ Taking $\nu=\ov{\theta}$, we obtain
\[
   \ind(\ka\hookrightarrow\g)=\frac{\langle\ov{\theta},\ov{\theta}\rangle_\ka}{\langle\ov{\theta},\ov{\theta}\rangle_\g}=\frac{2}{\langle\ov{\theta},\ov{\theta}\rangle_\g}=
   \frac{\langle{\theta},{\theta}\rangle_\g}{\langle\ov{\theta},\ov{\theta}\rangle_\g}=
   \frac{(\theta,\theta)}{(\ov{\theta},\ov{\theta})}.   \qedhere
\]
\end{proof}

\subsection{Levi subalgebras and gradings}     \label{subs:Z-grad-versus}
By definition, a Levi subalgebra is the centraliser in $\g$ of a toral subalgebra (i.e., of the Lie algebra of 
an algebraic torus). If $\h=\z_\g(\tilde\ce)$ for a toral subalgebra $\tilde\ce$, then $\ce:=\z_\g(\h)$ is 
the centre of $\h$ and $\h=\ce\dotplus \es$, where $\es=[\h,\h]$. For  $\mu\in\ce^*$, set
$\me(\mu)=\{x\in\me\mid [c,x]=\mu(c)x \ \ \forall  c\in \ce\}$. Then $\me(\mu)$ is an $\h$-module. By an
old result of Kostant,  $\me(\mu)$ is a {\bf simple} $\h$-module. See~\cite[p.\,136]{ko10}
for a proof and historical remarks. (An alternate independent approach appears 
in~\cite[Chap.\,3,\,\S 3.5]{t41}.) 
As in the introduction, we assume that $\te\subset\h$ and  $\be_\h\subset\be$. This provides the 
decomposition $\g=\me^-\oplus\h\oplus\me^+$ and partition $\Delta^+_\h\cup\Delta(\me^+)=\Delta^+$. 
If $\dim\ce=k$, then one defines a $\BZ^k$-grading of $\g$ as follows. To simplify notation,  
assume that $\Pi\cap\Delta(\me^+)=\{\ap_1,\dots,\ap_k\}$. For $\gamma\in \Delta$, let $\g^\gamma$ 
denote the corresponding root space. If $\gamma=\sum_{i=1}^na_i\ap_i\in \Delta$, then the 
$\ap_i$-{\it height\/} of $\gamma$ is $\hot_{\ap_i}(\gamma)=a_i$ and $\hot(\gamma)=\sum_{i}a_i$ is
the (usual) {\it height} of $\gamma$. For a 
$k$-tuple $(j_1,\dots,j_k)\in\BZ^k$, set 
\[ 
\Delta(j_1,\dots,j_k)=\{\gamma\in\Delta\mid \hot_{\ap_i}(\gamma)=j_i, \ 1\le i\le k \} \ \text{ and } \
   \g(j_1,\dots,j_k)=\bigoplus_{\gamma\in \Delta(j_1,\dots,j_k)} \g^\gamma .
\]
This yields a $\BZ^k$-grading $\g=\bigoplus_{j_1,\dots,j_k} \g(j_1,\dots,j_k)$ 
with $\g(0,\dots,0)=\h$. By the above result of Kostant, each $\g(j_1,\dots,j_k)$
with $(j_1,\dots,j_k)\ne (0,\dots,0)$ is a simple $\h$-module. Indeed, if $(\nu_i,\ap_j)=\delta_{ij}$, 
$1\le i,j\le k$ and $\mu=\sum_{i=1}^k j_i\nu_i\in\ce^*$, then $\g(j_1,\dots,j_k)=\me(\mu)$.
If $k=1$ and $\Pi\cap\Delta(\me^+)=\{\ap\}$, then $\h$ is a maximal Levi and the corresponding 
$\BZ$-grading is called the $(\BZ,\ap)$-{\it grading}. In this case, we write $\g_\ap(j)$ in place of $\g(j)$.

The passage from an arbitrary Levi subalgebra $\h\subset\g$ to a maximal Levi subalgebra of a simple 
subalgebra of $\g$ goes as follows. Suppose that we are to compute the $\eus C_\h$-eigenvalue on
a simple $\h$-module $V=\g(j_1,\dots,j_k)\subset\me^+$. Here $V^*=\g(-j_1,\dots,-j_k)\subset \me^-$  is 
the dual $\h$-module and $\Phi$ is non-degenerate on $V\oplus V^*$. Take
\[
  \q=\textstyle \bigoplus_{i\in\BZ}\g(ij_1,\dots,ij_k) \subset \g .
\]
It is a $\BZ$-graded subalgebra of $\g$ with $\q(i)=\g(ij_1,\dots,ij_k)$. Since $\Phi\vert_\q$ is 
non-degenerate, $\q$ is reductive. 
Furthermore, by~\cite[Sect.\,1]{ko10}, the positive part 
$\q({\ge}1)$ is generated by $V=\q(1)$. Since each $\q(i)$, $i\ne 0$, is a simple $\q(0)$-module,
the $\BZ$-grading of $\q$ is determined by a sole simple root of $\q$. Taking the corresponding simple ideal of $\q$, one can write
$\q=\ka\dotplus \el$, where $\el$ is reductive, $\ka$ is simple, and there is a simple root $\beta$ of $\ka$ 
such that $\q(i)=\ka_\beta(i)$ for $i\ne 0$, while 
\beq    \label{g(0)-decomp}
    \h=\q(0)=\ka_\beta(0)\dotplus \el .
\eeq
Thus, $\ka_\beta(0)$ is a maximal Levi subalgebra of $\ka$ and $V=\ka_\beta(1)$ for the $(\BZ{,}\beta)$-grading of $\ka$. Taking a basis for $\h$ adapted to 
the sum in~\eqref{g(0)-decomp}, one can split $\eus C_\h$ as
$\eus C_{\h}=\tilde{\eus C}_{\ka_\beta(0)}+\tilde{\eus C}_\el$.
Since $\el$ acts trivially on $V$, the eigenvalues of $\eus C_{\h}$ and $\tilde{\eus C}_{\ka_\beta(0)}$ 
on $V$ are the same. Furthermore, if ${\eus C}_{\ka_\beta(0)}$ is the true Casimir element associated 
with $(\Phi_\ka)\vert_{\ka_\beta(0)}$
and $\Phi\vert_\ka=F{\cdot}\Phi_\ka$ (cf. Section~\ref{subs:transit-factor}), 
then $\tilde{\eus C}_{\ka_\beta(0)}=F{\cdot}{\eus C}_{\ka_\beta(0)}$. Here the factor $F$ comes from the
fact that the dual bases for $\ka_\beta(0)$ required in the Casimir elements 
$\eus C_{\h}$ (i.e., in $\tilde{\eus C}_{\ka_\beta(0)}$) and ${\eus C}_{\ka_\beta(0)}$
are being computed via the proportional bilinear forms $\Phi\vert_\ka$ and $\Phi_\ka$, respectively. Thus,
\\ \indent
{\sl  for any simple $\h$-module $V\subset\me^+$, there is a simple $\BZ$-graded subalgebra 
$\ka\subset \g$ and a simple root $\beta$ of\/ $\ka$ such that $V=\ka_\beta(1)$ and then the
$\eus C_{\h}$-eigenvalue in $V$ equals  $F$ times the ${\eus C}_{\ka_\beta(0)}$-eigenvalue on $V$,
where $\Phi\vert_\ka=F{\cdot}\Phi_\ka$.
}
\\ \indent
For this reason, we restrict ourselves with considering only maximal Levi subalgebras of $\g$ and
the corresponding $(\BZ,\ap)$-gradings.

\section{$(\BZ,\ap)$-gradings and partitions of root systems}
\label{sect:some-prop}

\noindent
If $\Delta$ has two root lengths, then $\Pi_l$ is the set of {\bf long} simple roots and $\theta_s$ stands 
for the dominant {\bf short} root. In the simply-laced case, we assume that $\Pi_l=\Pi$ and 
$\theta_s=\theta$. Recall that $\hot_{\ap_i}(\gamma)$  is the $\ap_i$-{height} of $\gamma\in\Delta$.
Given $\ap\in\Pi$, set $d_\ap=\hot_\ap(\theta)$, 
$\Delta_{\ap}(i)=\{\gamma \mid \hot_{\ap}(\gamma)=i\}$,  
$\Delta^+_{\ap}(0)=\Delta^+\cap\Delta_{\ap}(0)$, and
\[
  \mathcal R_\ap=\textstyle \bigsqcup_{i=1}^{d_\ap} \Delta_{\ap}(i)=\{\gamma\mid (\gamma,\vp_\ap)>0\} .
\]
Then $ \Delta=\Delta^+_{\ap}(0)\sqcup \mathcal R_\ap$, 
and $\Delta_{\ap}(i)$ is the set of $\te$-weights of $\g_\ap(i)$, where
$\g=\bigoplus_{i=-d_\ap}^{d_\ap}\g_\ap(i)$ is the $(\BZ,\ap)$-grading. Since $\g_\ap(i)$ is a simple 
$\g_\ap(0)$-module for $i\ne 0$,   
$\Delta_\ap(i)$ with $i> 0$ contains a unique minimal root (=\,the lowest weight of $\g_\ap(i)$ w.r.t. 
$\Delta_\ap^+(0)$) and a unique maximal root (=\,the highest weight).

As usual, $\gamma^\vee=2\gamma/(\gamma,\gamma)$ and 
$\Delta^\vee=\{\gamma^\vee\mid \gamma\in \Delta\}$ is the {\it dual root system}.  
The set of simple roots in $(\Delta^+)^\vee$ is $\Pi^\vee$ and notation $\hot(\gamma^\vee)$ refers to 
the height of $\gamma^\vee$ in $\Delta^\vee$. The fundamental weight $\vp_\ap$ is {\it minuscule}, if 
$(\vp_\ap,\gamma^\vee)\le 1$ for any $\gamma\in \Delta^+$, 
i.e., $(\vp_\ap,\theta_s^\vee)=1$; and $\vp_\ap$ is {\it cominuscule}, if 
$\hot_{\ap}(\theta)=1$, i.e., $d_\ap=1$.

\un{\it\bfseries Coxeter numbers}.  Set $h=h(\g):=\hot(\theta)+1$---the {\it Coxeter number\/} of $\g$ and 
$h^*=h^*(\g):=\hot(\theta^\vee)+1$---the {\it dual Coxeter number\/} of $\g$. Since $\theta_s^\vee$ is 
the highest root in $\Delta^\vee$,  we have $\hot(\theta_s)+1=h^*(\g^\vee)$---the dual Coxeter number of 
the Langlands dual Lie algebra $\g^\vee$. Note that $h(\g)=h(\g^\vee)$, hence $h^*\le h$.
However, $h^*(\g)$ and  $h^*(\g^\vee)$ can be different.
Thus, there are up to three Coxeter numbers for $(\g,\g^\vee)$, which all coincide in the {\sf\bfseries ADE}-case.

If $M\subset\Delta^+$, then $|M|=\sum_{\gamma\in M}\gamma$, while $\#M$ stands for the cardinality. 
As usual, $2\rho=|\Delta^+|$ and hence $(\rho,\gamma^\vee)=\hot(\gamma^\vee)$ for any 
$\gamma\in \Delta^+$. The orthogonal projection of $2\rho$ to the edge of the Weyl chamber 
corresponding to $\vp_\ap$ can be written as $q_\ap\vp_\ap$ and it is  clear that 
$\displaystyle q_\ap=\frac{(2\rho,\vp_\ap)}{(\vp_\ap,\vp_\ap)}$. The numbers $\{q_\ap\}_{\ap\in\Pi}$ 
are needed for the description of the Gorenstein highest weight vector varieties, see~\cite[3.7]{p88}, 
\cite[Remark~1.5]{ja99}, or 
for computing cohomology of invertible sheaves on $G/P_\ap$, where 
$P_\ap$ is the maximal parabolic subgroup for $\ap$, see~\cite[4.6]{akh95}.

Let $W_\ap$ be the subgroup of the Weyl group $W$ generated by all {\bf simple} reflections $s_\beta$ 
with $\beta\in\Pi\setminus \{\ap\}$. Then $W_\ap$ is the stabiliser of $\vp_\ap$ in $W$ and also is the 
Weyl group of $\g_\ap(0)$. Write $w_{\ap,0}$ is the longest element in $W_\ap$.  Recall that 
$w_{\ap,0}^2=1$.

\begin{lm}    
\label{prop:edge-labels}
One has $q_\ap\vp_\ap=\vert\mathcal R_\ap\vert$ and $q_\ap\in \BN$.
\end{lm}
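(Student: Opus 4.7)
The plan is to exploit the fact that the parabolic Weyl subgroup $W_\ap\subset W$ fixes the fundamental weight $\vp_\ap$ and permutes each layer $\Delta_\ap(i)$. First I would observe that for any $\beta\in\Pi\setminus\{\ap\}$, the simple reflection $s_\beta$ sends $\ap_j\mapsto \ap_j-(\ap_j,\beta^\vee)\beta$, which leaves the coefficient of $\ap$ in the $\Pi$-expansion of any root unchanged. Consequently $W_\ap$ preserves $\ap$-heights, permutes each $\Delta_\ap(i)$, and hence permutes $\mathcal{R}_\ap$. In particular, the sum $|\mathcal{R}_\ap|\in\te^*$ is $W_\ap$-invariant.

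Next, $W_\ap$ is generated by the reflections in the hyperplanes $\beta^\perp$ for $\beta\in\Pi\setminus\{\ap\}$, so its fixed subspace in $\te^*$ is the orthogonal complement of the span of $\Pi\setminus\{\ap\}$. That complement is one-dimensional and contains $\vp_\ap$. Hence $|\mathcal{R}_\ap|=c\,\vp_\ap$ for some scalar $c$. To identify $c$, I pair with $\vp_\ap$: since every $\gamma\in\Delta^+_\ap(0)$ is orthogonal to $\vp_\ap$ and $\Delta^+=\Delta^+_\ap(0)\sqcup\mathcal{R}_\ap$, one has
\[
   c(\vp_\ap,\vp_\ap)=(|\mathcal{R}_\ap|,\vp_\ap)=(|\Delta^+|,\vp_\ap)=(2\rho,\vp_\ap),
\]
which yields $c=(2\rho,\vp_\ap)/(\vp_\ap,\vp_\ap)=q_\ap$ by the very definition of $q_\ap$.

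For the integrality claim $q_\ap\in\BN$, I would pair the just-proved identity $|\mathcal{R}_\ap|=q_\ap\vp_\ap$ with the coroot $\ap^\vee$: the left-hand side is a sum of integers $(\gamma,\ap^\vee)\in\BZ$, while the right-hand side equals $q_\ap(\vp_\ap,\ap^\vee)=q_\ap$, so $q_\ap\in\BZ$. Positivity is automatic since $\mathcal{R}_\ap\ne\varnothing$ and $\rho$ lies in the interior of the Weyl chamber, forcing $(2\rho,\vp_\ap)>0$. I expect no genuine obstacle: the whole argument rests on the single observation that simple reflections outside $\{\ap\}$ do not alter the $\ap$-height, together with standard facts about fundamental weights and the fixed subspaces of parabolic Weyl subgroups.
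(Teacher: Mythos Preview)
Your proof is correct and follows essentially the same approach as the paper: both establish that $W_\ap$ permutes $\mathcal{R}_\ap$ (hence $|\mathcal{R}_\ap|$ is a multiple of $\vp_\ap$), identify the scalar via the relation $(2\rho,\vp_\ap)=(|\mathcal{R}_\ap|,\vp_\ap)$, and deduce integrality by pairing with $\ap^\vee$. You are simply more explicit about why simple reflections outside $\{\ap\}$ preserve $\ap$-height and why the $W_\ap$-fixed subspace is one-dimensional, but the underlying argument is the same.
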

\begin{proof}
We have $2\rho=|\Delta^+_\ap(0)|+|\mathcal R_\ap|$ and $(\mu,\vp_\ap)=0$ for
any $\mu\in \Delta^+_\ap(0)$. Hence $(2\rho,\vp_\ap)=(|\mathcal R_\ap|,\vp_\ap)$. Moreover,
$s_\beta(\mathcal R_\ap)=\mathcal R_\ap$ for any $\beta\in\Pi\setminus \{\ap\}$. Therefore,
$|\mathcal R_\ap|$ is proportional to $\vp_\ap$. Clearly, $q_\ap=(|\mathcal R_\ap|,\ap^\vee)$ is an integer.
\end{proof}

\begin{thm}    \label{thm:otmetki}
\quad {\sf 1$^o$.} \ For any $\ap\in\Pi$, we have
\begin{itemize}
\item[\sf (i)] \  $q_\ap\le h$; moreover, $q_\ap=h$ if and only if $\vp_\ap$ is minuscule;
\item[\sf (ii)] \ $q_\ap\ge \rk\g+1$ and this minimum is attained for some $\ap$.
\end{itemize}

{\sf 2$^o$}. \ For any $\ap\in\Pi_l$, one has
$q_\ap\le h^*$; moreover, $q_\ap=h^*$ if and only if $\vp_\ap$ is cominuscule.

{\sf 3$^o$}. \ Suppose that $\theta$ is fundamental and $\widehat\ap\in\Pi$ is such that 
$(\theta,\widehat\ap)\ne 0$. Then $\widehat\ap\in\Pi_l$, $d_{\widehat\ap}=2$, and 
$q_{\widehat\ap}=h^*-1$.

{\sf 4$^o$}. \ If $\theta\ne \theta_s$ and $(\theta_s,\ap)\ne 0$, then $q_{\ap}=h-1$. 
\end{thm}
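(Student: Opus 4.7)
My plan centers on the identity
\[
q_\alpha \;=\; \frac{2(\rho, \varphi_\alpha)}{(\varphi_\alpha, \varphi_\alpha)}
\]
from Lemma~\ref{prop:edge-labels}, together with the facts $(\rho, \gamma^\vee) = \hot(\gamma^\vee)$ (via $\rho = \sum_\beta \varphi_\beta$), $(\theta, \theta) = 1/h^\ast$, and $h(\g) = h(\g^\vee)$. I would dispatch Parts~3 and~4 first, since both are direct substitutions.

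For Part~3, plugging $\theta = \varphi_{\widehat\alpha}$ yields $q_{\widehat\alpha} = (\rho, \theta^\vee) = h^\ast - 1$. For $\widehat\alpha \in \Pi_l$, the ratio $(\widehat\alpha, \theta^\vee) = (\widehat\alpha, \widehat\alpha)/(\theta, \theta)$ is a positive Cartan integer at most $1$, forcing $(\widehat\alpha, \widehat\alpha) = (\theta, \theta)$. For $d_{\widehat\alpha} = 2$, I would use the affine Cartan relation at $\alpha_0 = -\theta$: since $\theta = \varphi_{\widehat\alpha}$ gives $(\alpha_0, \alpha_j^\vee) = -\delta_{j,\widehat\alpha}$ for $j \ge 1$, the null-root relation $\sum_{j=0}^n a_{0j}^{\mathrm{aff}} n_j = 0$ (with $n_0 = 1$ and $\theta = \sum_{j\ge 1} n_j \alpha_j$) reduces to $2 - n_{\widehat\alpha} = 0$, so $d_{\widehat\alpha} = n_{\widehat\alpha} = 2$. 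Part~4 runs in parallel: dominance of $\theta_s$ together with the non-simply-laced classification forces $\theta_s = \varphi_\alpha$ to be fundamental, and then $q_\alpha = (\rho, \theta_s^\vee) = h(\g^\vee) - 1 = h - 1$.

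For Parts~1 and~2 I would exploit Casimir eigenvalues on the top layer. Using $\rho_{\g_\alpha(0)} = \rho - \tfrac{q_\alpha}{2}\varphi_\alpha$ and $(\theta, \varphi_\alpha) = d_\alpha/(2 r_\alpha h^\ast)$ with $r_\alpha = (\theta, \theta)/(\alpha, \alpha)$, the eigenvalue of $\mathcal C_\alpha(0)$ on $\g_\alpha(d_\alpha) \ni e_\theta$ works out to
\[
\gamma_\alpha(d_\alpha) \;=\; 1 - \frac{d_\alpha}{2 r_\alpha h^\ast}\, q_\alpha.
\]
Sandwiching $\gamma_\alpha(d_\alpha) \in [0, \tfrac12]$ (from Prop.~\ref{prop:lms01}(iii) combined with nonnegativity of the Casimir on a simple module) gives the double bound $q_\alpha \in \bigl[r_\alpha h^\ast/d_\alpha,\, 2 r_\alpha h^\ast/d_\alpha\bigr]$. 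For $\alpha \in \Pi_l$ (so $r_\alpha = 1$), $d_\alpha \ge 2$ yields $q_\alpha < 2h^\ast/d_\alpha \le h^\ast$, strictly because the top piece is nontrivial and so $\gamma_\alpha(d_\alpha) > 0$; and $d_\alpha = 1$ yields the equality $q_\alpha = h^\ast$ via Prop.~\ref{prop:lms01}(iv) applied to the resulting $\BZ_2$-grading. This completes Part~2. For Part~1(i), long minuscule reduces to Part~2 in the simply-laced case (where $h = h^\ast$); the short minuscule cases (namely $(B_n, \alpha_n)$ and $(C_n, \alpha_1)$, where $d_\alpha = r_\alpha = 2$) require a direct verification that $q_\alpha = h$. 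For Part~1(ii), the lower bound $q_\alpha \ge \rk\g + 1$ I would derive by writing $q_\alpha = \sum_{i\ge 1} q_\alpha(i)$ and bounding each $q_\alpha(i)$ from below using the simple-module structure of $\g_\alpha(i)$.

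The hardest step I anticipate is the sharp minuscule bound in Part~1(i) for short $\alpha$: the sandwich above gives only $q_\alpha \le r_\alpha h^\ast$, which is not automatically $\le h$. I would close this gap either with a parallel Casimir computation involving the simple $\g_\alpha(0)$-module with highest weight $\theta_s$ (whose $\g$-Casimir value $h/(r_\alpha h^\ast)$ is itself an output of Part~4), or, failing that, by direct verification against the tables in Appendix~\ref{sect:App}.
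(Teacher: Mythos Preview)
Your approach is genuinely different from the paper's and works well for Parts~{\sf 2$^o$}, {\sf 3$^o$}, {\sf 4$^o$}, but has real gaps in Part~{\sf 1$^o$}.

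\medskip
\textbf{Where your approach succeeds.} For {\sf 3$^o$} and {\sf 4$^o$} your substitution argument is correct and, for {\sf 3$^o$}, arguably slicker than the paper's. For {\sf 2$^o$}, your Casimir sandwich $0<\gamma_\ap(d_\ap)\le\tfrac12$ with $\gamma_\ap(d_\ap)=1-\dfrac{d_\ap q_\ap}{2r_\ap h^*}$ does the job: for $\ap\in\Pi_l$ one has $r_\ap=1$, and $d_\ap\ge 2$ forces $q_\ap<h^*$ while $d_\ap=1$ forces $q_\ap=h^*$ via the $\BZ_2$-grading. This is a nice alternative.

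\medskip
\textbf{Where it fails.} For {\sf 1$^o$(i)} with $\ap$ short, your bound $q_\ap\le 2r_\ap h^*/d_\ap$ is genuinely too weak: in $\GR{C}{n}$ with $\ap=\ap_i$ ($i\ge 2$, $r_\ap=d_\ap=2$) it gives only $q_\ap\le 2(n{+}1)$, which does not yield $q_\ap\le h=2n$; likewise for $(\GR{B}{n},\ap_n)$ and $(\GR{G}{2},\ap_1)$. Your suggested patch via a Casimir computation on the module with highest weight $\theta_s$ is not spelled out, and I do not see how to make it produce the sharp bound without effectively reverting to a case check. For {\sf 1$^o$(ii)}, the idea of bounding each $q_\ap(i)$ from below is too vague: one only gets $q_\ap(i)\ge 1$, hence $q_\ap\ge d_\ap$, which is far from $\rk\g+1$.

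\medskip
\textbf{What the paper does instead.} The paper extracts a single combinatorial identity that handles all four parts uniformly. Using $w_{\ap,0}(\mathcal R_\ap)=\mathcal R_\ap$ and $w_{\ap,0}(\Delta^+_\ap(0))=-\Delta^+_\ap(0)$, one gets $\rho+w_{\ap,0}\rho=|\mathcal R_\ap|=q_\ap\vp_\ap$, and pairing with $\gamma^\vee$ for $\gamma\in\mathcal R_\ap$ yields
\[
\hot(\gamma^\vee)+\hot\bigl(w_{\ap,0}(\gamma^\vee)\bigr)=q_\ap\,(\vp_\ap,\gamma^\vee).
\]
Taking $\gamma=\ap$ gives the key formula $q_\ap=1+\hot\bigl(w_{\ap,0}(\ap^\vee)\bigr)$. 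From this, {\sf 1$^o$(i)} follows since coroot heights are $\le h-1$, with equality iff $w_{\ap,0}(\ap^\vee)$ is the highest root of $\Delta^\vee$, which is equivalent to $\vp_\ap$ minuscule; {\sf 1$^o$(ii)} follows since $w_{\ap,0}(\ap^\vee)$ dominates (in height) the coroot $\sum_i\ap_i^\vee$, giving $q_\ap\ge n+1$; {\sf 2$^o$} follows because for $\ap\in\Pi_l$ the coroot $w_{\ap,0}(\ap^\vee)$ is short in $\Delta^\vee$, so its height is $\le\hot(\theta^\vee)=h^*-1$; and {\sf 3$^o$}, {\sf 4$^o$} follow by identifying $w_{\ap,0}(\ap)$ explicitly as $\theta-\widehat\ap$ (resp.\ the dual statement).

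\medskip
In short: keep your arguments for {\sf 2$^o$}--{\sf 4$^o$} if you like, but for {\sf 1$^o$} you need the height identity above (or a full case analysis) rather than the Casimir sandwich.
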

\begin{proof}
Part {\sf 1$^o$(i)} and the first half of {\sf (ii)} are proved in \cite[Appendix]{ja99}. For the sake of 
completeness, we provide the full argument. 
\\ \indent Clearly, $W_\ap$ preserves each $\Delta_\ap(i)$ and $w_{\ap,0}$ takes the unique
minimal element of each $\Delta_\ap(i)$, $i>0$, to the unique maximal one.

{\sf 1$^o$}. \  We have
$w_{\ap,0}(\mathcal R_\ap )=\mathcal R_\ap $ and
$w_{\ap,0}(\Delta^+_\ap(0))=-\Delta^+_\ap(0)$.
Hence $\rho+w_{\ap,0}\rho=|\mathcal R_\ap |$ and, for any $\gamma\in \mathcal R_\ap $, we have
\[
  (\rho,\gamma^\vee)+(\rho,w_{\ap,0}(\gamma^\vee))=
  (|\mathcal R_\ap |,\gamma^\vee)=q_\ap(\vp_\ap,\gamma^\vee).
\] 
That is, $\hot(\gamma^\vee)+\hot (w_{\ap,0}(\gamma^\vee))=q_\ap(\vp_\ap,\gamma^\vee)$.
Taking $\gamma=\ap$, one obtains 
\beq    \label{eq:q-ap}
1+\hot (w_{\ap,0}(\ap^\vee))=q_\ap .
\eeq
Since $\hot(\gamma^\vee) \le h-1$ for any $\gamma^\vee\in \Delta^\vee$, we have 
$q_\ap\le h$. Furthermore, $q_\ap=h$ if and only if $w_{\ap,0}(\ap^\vee)$ is the
highest root in $\Delta^\vee$. In this case, the equality
$1=(\vp_\ap,\ap^\vee)=(\vp_\ap,w_{\ap,0}(\ap^\vee))$ implies that
$(\vp_\ap,\gamma^\vee)\le 1$ for any $\gamma\in\Delta^+$, i.e.,
$\vp_\ap$ is minuscule. On the other side, $w_{\ap,0}(\ap^\vee)$
is the co-root of maximal height among the roots $\gamma$ such that
$(\vp_\ap,\gamma^\vee)=1$. Since this set contains the co-root
$\sum_{i=1}^n\ap_i^\vee$, we have $\hot (w_{\ap,0}(\ap^\vee))\ge n=\rk\g$.

The existence of $\ap$ such that $q_\ap=\rk\g+1$ can be checked case-by-case. If $\g$ is of type 
$\GR{D}{n}$ or $\GR{E}{n}$, then the branching node of the Dynkin diagram will do. For {\sf\bfseries{BCFG}}, one takes the unique long simple
root that is adjacent to a short root. For $\GR{A}{n}$, all simple roots yield $q_\ap=\rk\g+1=h(\g)$.

{\sf 2$^o$}. \ If $\ap\in\Pi_l$, then  $\hot(w_{\ap,0}(\ap^\vee))\le \hot(\theta^\vee)=h^*-1$, and the equality occurs if and only if $w_{\ap,0}(\ap)=\theta$. In this case, $\theta\in\Delta_\ap(1)$. Hence 
$\hot_\ap(\theta)=\hot_\ap(\ap)=1$, i.e.,
$\vp_\ap$ is comuniscule.

{\sf 3$^o$}. \ Here $(\theta^\vee,\widehat\ap)=(\theta,{\widehat\ap}^\vee)=1$, hence 
$\widehat\ap\in\Pi_l$.
Then $2=(\theta,\theta^\vee)=d_{\widehat\ap}(\widehat\ap,\theta^\vee)=d_{\widehat\ap}$. Next,
$w_{\widehat\ap,0}(\widehat\ap)$ is the maximal root whose $\widehat\ap$-height equals $1$, i.e.,
$w_{\widehat\ap,0}(\widehat\ap)=\theta-\widehat\ap$. Then 
$\hot(w_{\widehat\ap,0}({\widehat\ap}^\vee))=\hot((\theta-\widehat\ap)^\vee)=
\hot(\theta^\vee-{\widehat\ap}^\vee)=h^*-2$ and it follows from \eqref{eq:q-ap} that 
$q_{\widehat\ap}=h^*-1$.

{\sf 4$^o$}. \  Here $\ap\in\Pi_s$ and $\ap^\vee$ is the unique long simple root in $\Pi^\vee$
such that $(\ap^\vee,\theta_s^\vee)\ne 0$. As in the previous part, the $\ap^\vee$-height
of $\theta_s^\vee$ equals $2$ and $w_{\ap,0}(\ap^\vee)=\theta_s^\vee-\ap^\vee$. Since
$\hot(\theta_s^\vee-\ap^\vee)=h-2$, we obtain $q_\ap=\hot(\theta_s^\vee-\ap^\vee)+1=h-1$.
\end{proof}

\begin{ex}    \label{ex:spisok-q}
For the reader's convenience, we list the numbers $\{q_\ap\mid \ap\in\Pi\}$, $h$, and $h^*$ for all simple $\g$.
The numbering of simple roots follows~\cite[Table\,1]{t41}; in particular,  for  $\GR{E}{6}$, $\GR{E}{7}$, and $\GR{E}{8}$, the numbering is 
\\[.9ex] 
\centerline{
\raisebox{-3.1ex}{\begin{tikzpicture}[scale= .55, transform shape]
\tikzstyle{every node}=[circle, draw,fill=brown!30]
\node (a) at (0,0) {\bf 1};
\node (b) at (1.1,0) {\bf 2};
\node (c) at (2.2,0) {\bf 3};
\node (d) at (3.3,0) {\bf 4};
\node (e) at (4.4,0) {\bf 5};
\node (f) at (2.2,-1.1) {\bf 6};
\foreach \from/\to in {a/b, b/c, c/d, d/e, c/f}  \draw[-] (\from) -- (\to);
\end{tikzpicture}}, \ \ 
\raisebox{-3.1ex}{\begin{tikzpicture}[scale= .55, transform shape]
\tikzstyle{every node}=[circle, draw, fill=brown!30]
\node (a) at (0,0) {\bf 1};
\node (b) at (1.1,0) {\bf 2};
\node (c) at (2.2,0) {\bf 3};
\node (d) at (3.3,0) {\bf 4};
\node (e) at (4.4,0) {\bf 5};
\node (f) at (5.5,0) {\bf 6};
\node (g) at (3.3,-1.1) {\bf 7};
\foreach \from/\to in {a/b, b/c, c/d, d/e, e/f, d/g}  \draw[-] (\from) -- (\to);
\end{tikzpicture}}, \ \
and \ \ 
\raisebox{-3.1ex}{\begin{tikzpicture}[scale= .55, transform shape]
\tikzstyle{every node}=[circle, draw, fill=brown!30] 
\node (h) at (-1.1,0) {\bf 1};
\node (a) at (0,0) {\bf 2};
\node (b) at (1.1,0) {\bf 3};
\node (c) at (2.2,0) {\bf 4};
\node (d) at (3.3,0) {\bf 5};
\node (e) at (4.4,0) {\bf 6};
\node (f) at (5.5,0) {\bf 7};
\node (g) at (3.3,-1.1) {\bf 8};
\foreach \from/\to in {h/a, a/b, b/c, c/d, d/e, e/f, d/g}  \draw[-] (\from) -- (\to);
\end{tikzpicture}},}  
\\ respectively.  We also write $q_i$ for $q_{\ap_i}$. 
\begin{enumerate}
\item For $\GR{A}{n}$, one has $q_i=n+1=h=h^*$ for all $i$;
\item For $\GR{B}{n}$, one has $q_i=2n-i$ for $1\le i\le n-1$ and $q_n=2n$; here $h=2n, h^*=2n-1$;
\item For $\GR{C}{n}$, one has $q_i=2n-i+1$ for all $i$; here $h=2n, h^*=n+1$;
\item For $\GR{D}{n}$, one has $q_i=2n-i-1$ for $1\le i\le n-2$ and $q_{n-1}=q_n=2n-2=h$.
\item For $\GR{E}{6}$, $h=h^*=12$ and the numbers $\{q_i\}$ are: \quad \begin{E6}{12}{9}{7}{9}{12}{11}\end{E6}  
\item For $\GR{E}{7}$, $h=h^*=18$ and the numbers $\{q_i\}$ are: \quad \begin{E7}{18}{13}{10}{8}{11}{17}{14}\end{E7}
\item For $\GR{E}{8}$, $h=h^*=30$ and the numbers $\{q_i\}$ are: \quad \begin{E8}{29}{19}{14}{11}{9}{13}{23}{17}\end{E8}
\item For $\GR{F}{4}$, one has $q_1=11=h-1$, $q_2=7$, $q_3=5$, and $q_4=8=h^*-1$;
\item For $\GR{G}{2}$, one has $q_1=5=h-1$ and $q_2=3=h^*-1$.
\end{enumerate}
\end{ex}

\begin{rmk}    \label{rem:refinement-q}
{\sf (1)} Since  $\Delta_\ap(i)$ is the set of weights of a $\g_\ap(0)$-module, we have
$|\Delta_\ap(i)|=q_\ap(i)\vp_\ap$, where $\q_\ap(i)>0$ for $i>0$ and $\sum_{i= 1}^{d_\ap} q_\ap(i)=q_\ap$. This provides a refinement 
of the numbers $\{q_\ap\mid \ap\in\Pi\}$, which we use in Section~\ref{sect:5/2}.
\\ \indent
{\sf (2)} We frequently use the fact that $\#\{\gamma\in\Delta^+\mid (\theta,\gamma)>0\}=2h^*-3$, see~\cite[Prop.\,1]{suter}. 
\end{rmk}

\section{Eigenvalues of Casimir elements associated with $(\BZ,\ap)$-gradings}
\label{sect:5/2}

\noindent
In this section, we fix $\ap\in\Pi$  and work with the $(\BZ,\ap)$-grading 
$\g=\bigoplus_{i\in\BZ}\g_\ap(i)$. Recall that the centre of $\g_\ap(0)$ is one-dimensional (and is 
spanned by $\vp_\ap$ upon the identification of $\te$ and $\te^*$), each $\g_\ap(i)$, $i\ge 1$, is a 
{\bf simple} $\g_\ap(0)$-module, and the set of $\te$-weights of $\g_\ap(i)$ is 
$\Delta_\ap(i)$,~cf. Section~\ref{sect:some-prop}. The {\it height\/} 
of the $(\BZ,\ap)$-grading is $d_\ap=\max_{j\in\BZ}\{j\mid \g_\ap(j)\ne 0\}$. 
The Casimir element in 
$\eus U(\g_\ap(0))$ corresponding to the restriction of $\Phi$ to $\g_\ap(0)$ is denoted by
$\eus C_\ap(0)$. 
Write $\gamma_\ap(i)$ for the eigenvalue of $\eus C_\ap(0)$ on $\g_\ap(i)$. To keep track of the 
length of simple roots, we need $r_\ap=(\theta,\theta)/(\ap,\ap)$. Hence $r_\ap=1$ if and only if 
$\ap\in\Pi_l$. Note that $(\ap,\ap)=1/(h^*r_\ap)$ and  $(\ap,\vp_\ap)=1/(2h^*r_\ap)$.

In the rest of this section, we write $d$ for $d_\ap=\hot_\ap(\theta)$.

\begin{thm}    \label{thm:s-znach-1}
For any $(\BZ{,}\ap)$-grading, we have $\gamma_\ap(1)=\displaystyle\frac{q_\ap}{2h^* r_\ap}$ and 
$\gamma_\ap(d )=1-\displaystyle\frac{d  q_\ap}{2h^* r_\ap}$.
\end{thm}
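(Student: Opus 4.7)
The plan is to apply the standard eigenvalue formula for the Casimir of a reductive Lie algebra: if $V$ is a simple $\g_\ap(0)$-module with highest weight $\lambda$, then $\eus C_\ap(0)$ acts on $V$ as the scalar $(\lambda,\lambda+2\rho_\ap(0))$, where $\rho_\ap(0)=\tfrac12|\Delta^+_\ap(0)|$ and $(\ ,\ )$ is the canonical form on $\te^*$. Since $\te\subset\g_\ap(0)$, the form induced by $\Phi|_{\g_\ap(0)}$ on $\te$ coincides with the one induced by $\Phi$; and $\rho_\ap(0)$ lies in the semisimple part of $\g_\ap(0)$, hence is orthogonal to the centre, so no centre-versus-semisimple bookkeeping is needed. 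Applied to $V=\g_\ap(1)$ and $V=\g_\ap(d)$, this reduces each claim to a short root-system computation.

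The bridge to the combinatorics of the grading is the identity
\begin{equation*}
   2\rho_\ap(0) \;=\; 2\rho - q_\ap\vp_\ap,
\end{equation*}
which follows at once from the partition $\Delta^+=\Delta^+_\ap(0)\sqcup\mathcal R_\ap$ together with Lemma~\ref{prop:edge-labels}. I will also use the elementary inner products $(\ap,\ap)=1/(h^*r_\ap)$ and $(\ap,\vp_\ap)=1/(2h^*r_\ap)$ recalled at the start of the section, and $(\ap,\rho)=(\ap,\ap)/2$ (because $\ap$ is simple).

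For $\gamma_\ap(1)$: the paper notes that $\ap$ is the \emph{lowest} weight of the simple $\g_\ap(0)$-module $\g_\ap(1)$, so its highest weight is $\mu := w_{\ap,0}(\ap)$. Since $w_{\ap,0}$ is an orthogonal involution and sends $\rho_\ap(0)$ to $-\rho_\ap(0)$, the eigenvalue formula becomes
\begin{equation*}
   \gamma_\ap(1) \;=\; (\mu,\mu+2\rho_\ap(0))\;=\;(\ap,\ap) - 2(\ap,\rho_\ap(0)).
\end{equation*}
Substituting $2\rho_\ap(0)=2\rho-q_\ap\vp_\ap$ and using the three inner products just listed, the right-hand side collapses to $\tfrac{q_\ap}{2}(\ap,\ap)=q_\ap/(2h^*r_\ap)$.

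For $\gamma_\ap(d)$: the highest weight of $\g_\ap(d)$ is the highest root $\theta$ (since $\theta\in\Delta_\ap(d)$ and $\theta$ is maximal in all of $\Delta^+$). Using $(\theta,\theta+2\rho)=1$, recalled in Section~\ref{sect:prelim-Cas} as a reformulation of $(\theta,\theta)=1/h^*$, we get
\begin{equation*}
   \gamma_\ap(d) \;=\; (\theta,\theta+2\rho_\ap(0)) \;=\; 1 - q_\ap(\theta,\vp_\ap),
\end{equation*}
and expanding $\theta=\sum_i m_i\ap_i$ with $m_\ap=d$, together with $(\ap_i,\vp_\ap)=\delta_{i\ap}(\ap,\ap)/2$, yields $(\theta,\vp_\ap)=d/(2h^*r_\ap)$, which is exactly the second claim. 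The only real pitfall is keeping the normalisations straight—distinguishing the Dynkin form $\langle\,,\,\rangle_\g$ from the canonical form $(\,,\,)$, and tracking the factor $r_\ap$ when $\ap$ is short—but the identities $(\ap,\ap)=1/(h^*r_\ap)$ and $(\ap,\vp_\ap)=1/(2h^*r_\ap)$ dispose of this cleanly.
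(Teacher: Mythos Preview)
Your proof is correct and follows essentially the same route as the paper's: both use the identity $2\rho_\ap(0)=2\rho-q_\ap\vp_\ap$, identify the highest weights of $\g_\ap(1)$ and $\g_\ap(d)$ as $w_{\ap,0}(\ap)$ and $\theta$ respectively, and reduce the Casimir formula $(\lambda,\lambda+2\rho_\ap(0))$ to the same short inner-product computations. Your extra remarks justifying that the canonical form on $\te^*$ is unchanged under restriction to $\g_\ap(0)$ are a welcome bit of care, but the substance of the argument is identical.
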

\begin{proof}
Set $2\rho_\ap(0)=|\Delta^+_\ap(0)|$. Then 
$2\rho=2\rho_\ap(0)+|\mathcal R_\ap|=2\rho_\ap(0)+ q_\ap\vp_\ap$. 
By general principle, if $\eus V_\lb$ is a simple $\g_\ap(0)$-module with the highest weight $\lb$, then
the $\eus C_\ap(0)$-eigenvalue on $\eus V_\lb$ equals $(\lb,\lb+2\rho_\ap(0))$, 
see~\cite[Ch.\,3, Prop.\,2.4]{t41}.
\\ \noindent
\textbullet \quad In our case, $\ap$ is the lowest weight in $\g_\ap(1)$, hence $w_{\ap,0}(\ap)$ is the 
highest weight. Hence 
\begin{multline*}
   \gamma_\ap(1)=(w_{\ap,0}(\ap),w_{\ap,0}(\ap)+2\rho_\ap(0))=(\ap,\ap-2\rho_\ap(0))=
   (\ap,\ap-2\rho)+(\ap, |\mathcal R_\ap|) \\ =(\ap, |\mathcal R_\ap|) 
   =q_\ap(\ap,\vp_\ap)=\frac{q_\ap}{2}(\ap,\ap)=\frac{q_\ap}{2h^*r_\ap} .
\end{multline*}
\textbullet \quad 
Since $\theta$ is the highest weight of the $\g_\ap(0)$-module $\g_\ap(d )$, we obtain
\begin{multline*}
   \gamma_\ap(d )=(\theta,\theta+2\rho_\ap(0))=(\theta,\theta+2\rho)-(\theta,q_\ap\vp_\ap)
   =1-q_\ap(\theta,\vp_\ap) \\
   =1-q_\ap d{\cdot} (\ap,\vp_\ap)=1-\frac{q_\ap d }{2h^*r_\ap} .  \qedhere
\end{multline*}   
\end{proof}
\begin{cl}     \label{cor:2.2}
We have
\begin{itemize}
\item[\sf (i)] \ $d \gamma_\ap(1)+ \gamma_\ap(d )=1$ and hence\/ $1/2d \le \gamma_\ap(1)< 1/d $;
\item[\sf (ii)] \ if\/ $d =1$, i.e., $\vp_\ap$ is cominuscule, then $\gamma_\ap(1)=1/2$;
\item[\sf (iii)] \ if\/ $\theta$ is a multiple of a fundamental weight and $(\widehat\ap,\theta)\ne 0$, then
$\gamma_{\widehat\ap}(1)=(h^*-1)/2h^*$.
\end{itemize}
\end{cl}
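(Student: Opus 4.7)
My plan is to derive all three claims directly from Theorem~\ref{thm:s-znach-1}, which already supplies closed expressions for $\gamma_\ap(1)$ and $\gamma_\ap(d)$. No new structural input about the $(\BZ,\ap)$-grading is required; the whole statement is essentially a bookkeeping exercise together with one positivity remark.

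For part~\textsf{(i)}, I would simply add the two formulas of Theorem~\ref{thm:s-znach-1}: since $d\gamma_\ap(1)=dq_\ap/(2h^*r_\ap)$ and $\gamma_\ap(d)=1-dq_\ap/(2h^*r_\ap)$, the identity $d\gamma_\ap(1)+\gamma_\ap(d)=1$ is immediate. The two inequalities then follow by squeezing $\gamma_\ap(d)$. The upper estimate $\gamma_\ap(d)\le 1/2$, supplied by Proposition~\ref{prop:lms01}(iii) applied to $\me=\g/\h$, gives $d\gamma_\ap(1)\ge 1/2$, i.e.\ $\gamma_\ap(1)\ge 1/(2d)$. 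For the strict bound $\gamma_\ap(1)<1/d$ I would invoke $\gamma_\ap(d)>0$: the module $\g_\ap(d)$ is a non-trivial simple $\g_\ap(0)$-module (its weights all have $\ap$-height $d\ne 0$, so the centre $\ce\subset\g_\ap(0)$ acts by a non-zero scalar), and hence $\eus C_\ap(0)$ acts on it by a strictly positive scalar. Part~\textsf{(ii)} is the degenerate case $d=1$: the identity collapses to $2\gamma_\ap(1)=1$, so $\gamma_\ap(1)=1/2$.

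For part~\textsf{(iii)}, the hypothesis places us precisely in the setting of Theorem~\ref{thm:otmetki}.3$^o$, which yields $\widehat\ap\in\Pi_l$ (hence $r_{\widehat\ap}=1$) together with $q_{\widehat\ap}=h^*-1$. Substituting these into the formula $\gamma_{\widehat\ap}(1)=q_{\widehat\ap}/(2h^*r_{\widehat\ap})$ from Theorem~\ref{thm:s-znach-1} gives the claimed value $(h^*-1)/(2h^*)$. The only step that is not outright citation-and-substitution is the positivity $\gamma_\ap(d)>0$ needed for the strict upper bound in \textsf{(i)}; this is the mildest of obstacles, since it is the standard fact that the Casimir of a reductive Lie algebra acts by a strictly positive scalar on a non-trivial simple module, but it deserves the one line of explanation indicated above rather than being swept under the rug.
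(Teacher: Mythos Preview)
Your treatment of parts~\textsf{(i)} and~\textsf{(ii)} is correct and coincides with the paper's proof.

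For part~\textsf{(iii)} there is a genuine gap. The hypothesis is that $\theta$ is a \emph{multiple} of a fundamental weight, whereas Theorem~\ref{thm:otmetki}(3$^o$) assumes that $\theta$ \emph{is} a fundamental weight. These are not the same: for $\g=\spn$ (with $n\ge 2$) one has $\theta=2\vp_1$, so $\widehat\ap=\ap_1$ is \emph{short}. In that case $r_{\widehat\ap}=2$, not $1$, and $q_{\widehat\ap}=2n=2(h^*-1)$, not $h^*-1$; both conclusions you quote from Theorem~\ref{thm:otmetki}(3$^o$) fail. Your final substitution $q_{\widehat\ap}/(2h^*r_{\widehat\ap})=(h^*-1)/(2h^*)$ happens to remain valid because $q_{\widehat\ap}/r_{\widehat\ap}=h^*-1$ in this situation as well, but that equality requires a separate argument which you have not supplied.

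The paper covers the extra case by a direct computation that bypasses the values of $q_{\widehat\ap}$ and $r_{\widehat\ap}$ individually: since $\theta$ is a multiple of $\vp_{\widehat\ap}$ one has $d_{\widehat\ap}=2$, $\Delta_{\widehat\ap}(2)=\{\theta\}$, and $\Delta_{\widehat\ap}(1)=\{\mu\in\Delta^+\mid(\mu,\theta^\vee)=1\}$ splits into $h^*-2$ pairs $\{\mu,\theta-\mu\}$. Hence $|\mathcal R_{\widehat\ap}|=(h^*-1)\theta$, and then $\gamma_{\widehat\ap}(1)=(\widehat\ap,|\mathcal R_{\widehat\ap}|)=(h^*-1)(\widehat\ap,\theta)=(h^*-1)(\theta,\theta)(\widehat\ap,\theta^\vee)/2=(h^*-1)/2h^*$, using only $(\widehat\ap,\theta^\vee)=1$ and $(\theta,\theta)=1/h^*$. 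You should either reproduce this argument or restrict your claim to the case where $\theta$ is actually fundamental.
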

\begin{proof}
{\sf (i)} The first equality is clear. Since $\gamma_\ap(d )>0$, one obtains $\gamma_\ap(1)< 1/d $.
On the other hand, any $\eus C_\ap(0)$-eigenvalue in $\bigoplus_{i\ne 0}\g_\ap(i)$ is at most 
$1/2$,~see Prop.~\ref{prop:lms01}.
Hence $\gamma_\ap(d )\le 1/2$ and then $\gamma_\ap(1)\ge 1/2d $.

{\sf (ii)} This follows from (i) with $d=1$.

{\sf (iii)} If $\theta$ is {\bf fundamental}, then $\widehat\ap\in\Pi_l$ and $q_{\widehat\ap}=h^*-1$, see 
Theorem~\ref{thm:otmetki}(3$^o$). Hence the assertion on $\gamma_{\widehat\ap}(1)$. For a 
more general situation in which $\theta$ is a multiple of a fundamental weight, we use the fact that 
$\Delta_{\widehat\ap}(2)=\{\theta\}$ and
$\Delta_{\widehat\ap}(1)=\{\mu\in\Delta^+\mid (\mu,\theta^\vee)=1\}$. Then 
$\#\Delta_{\widehat\ap}(1)=2h^*-4$ (cf. Remark~\ref{rem:refinement-q}(2)) and $\Delta_{\widehat\ap}(1)$ is a union of pairs $\{\mu,\theta-\mu\}$.
Therefore $|\Delta_{\widehat\ap}(1)|=(h^*-2)\theta$ and $|\mathcal R_{\widehat\ap}|=(h^*-1)\theta$. As in 
the proof of Theorem~\ref{thm:s-znach-1},  
$\gamma_{\widehat\ap}(1)=(\widehat\ap,|\mathcal R_{\widehat\ap}|)=
(h^*-1)(\widehat\ap,\theta)=(h^*-1)(\theta,\theta)(\widehat\ap,\theta^\vee)/2=(h^*-1)/2h^*$.
\end{proof}

\begin{rmk}
If $\g$ is classical, then $d \in\{1,2\}$ for all $\ap\in\Pi$. Therefore, Theorem~\ref{thm:s-znach-1}
describes all eigenvalues of all $\eus C_\ap(0)$. 
\end{rmk}

To obtain a general formula for any $\gamma_\ap(i)$, we use the refinement 
$\{q_\ap(i)\}$ of numbers $\{q_\ap\mid \ap\in\Pi\}$, see Remark~\ref{rem:refinement-q}(1). 
Suppose that $1\le k\le d $ and we are going to compute $\gamma_\ap(k)$.
Consider the $\BZ$-graded subalgebra $\g^{[k]}:=\bigoplus_{i\in\BZ}\g_\ap(ki)\subset \g$, i.e., 
$\g^{[k]}(i)=\g_\ap(ki)$. Then
$\g$ and $\g^{[k]}$ share the same $0$-th part and thereby the same Cartan subalgebra $\te\subset \g_\ap(0)$.
 
\begin{lm}      \label{lm:g^k-ss}
$\g^{[k]}$ is semisimple and the root system of\/ $\g^{[k]}$ relative to $\te$  is\/
$\bigsqcup_{i\in\BZ} \Delta_\ap(ki)$. 
\end{lm}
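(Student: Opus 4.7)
The plan is to establish three things in sequence: that $\g^{[k]}$ is a $\BZ$-graded subalgebra with the predicted $\te$-weight decomposition, that its Killing form restricts non-degenerately (giving reductivity), and that its centre is trivial (giving semisimplicity).

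First I would note that $[\g_\ap(a),\g_\ap(b)]\subset\g_\ap(a+b)$ for all $a,b\in\BZ$, so $\g^{[k]}=\bigoplus_i\g_\ap(ki)$ is closed under the bracket and inherits a $\BZ$-grading with $\g^{[k]}(i)=\g_\ap(ki)$. The Cartan $\te$ of $\g$ lies inside $\g_\ap(0)\subset\g^{[k]}$, and is self-centralising in $\g$ and hence also in $\g^{[k]}$; therefore $\te$ is a Cartan subalgebra of $\g^{[k]}$. Each $\g_\ap(ki)$ is $\te$-stable, so the set of non-zero $\te$-weights on $\g^{[k]}$ is exactly $\bigsqcup_{i\in\BZ}\Delta_\ap(ki)$, which will serve as the root system once reductivity is known.

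Next I would verify that $\Phi\vert_{\g^{[k]}}$ is non-degenerate. Invariance of $\Phi$ applied to the grading element $h_{\vp_\ap}\in\te$ gives $\Phi(\g_\ap(a),\g_\ap(b))=0$ whenever $a+b\neq 0$; non-degeneracy of $\Phi$ on $\g$ then forces a perfect pairing between $\g_\ap(a)$ and $\g_\ap(-a)$. Both halves of every such pairing lie in $\g^{[k]}$, so $\Phi\vert_{\g^{[k]}}$ stays non-degenerate. Invoking the standard fact that a subalgebra of a semisimple Lie algebra on which the Killing form restricts non-degenerately is reductive (cf.\ Bourbaki, \emph{Lie groups and Lie algebras}, Ch.\,I, \S\,6), I conclude that $\g^{[k]}$ is reductive.

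Finally I would kill the centre. Any $z\in\z(\g^{[k]})$ must lie in $\te$ and annihilate every root; the roots already include $\Delta_\ap(0)=\Delta_\h$, whose common kernel in $\te$ is the one-dimensional line $\bbk h_{\vp_\ap}$. But the hypothesis $1\le k\le d$ forces $\Delta_\ap(k)\neq\varnothing$, and any $\gamma\in\Delta_\ap(k)$ satisfies $\gamma(h_{\vp_\ap})=(\gamma,\vp_\ap)=k(\ap,\vp_\ap)>0$. Hence no non-zero multiple of $h_{\vp_\ap}$ is central, so $\z(\g^{[k]})=0$ and $\g^{[k]}$ is semisimple. The only subtle step is the passage from non-degeneracy of $\Phi\vert_{\g^{[k]}}$ to reductivity; should one wish to avoid citing Bourbaki, one can instead recognise $\g^{[k]}$ as the fixed-point subalgebra of the order-$k$ automorphism of $\g$ acting on $\g_\ap(j)$ by $\zeta^j\cdot\mathrm{id}$ with $\zeta$ a primitive $k$-th root of unity, and invoke the classical reductivity of such fixed-point subalgebras. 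I do not anticipate a substantive obstacle beyond this choice of reference.
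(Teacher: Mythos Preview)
Your proof is correct and follows essentially the same line as the paper's. The paper's argument is much terser: it observes that any central element of $\g^{[k]}$ must lie in the centre of $\g_\ap(0)$, which is one-dimensional and acts non-trivially on $\g_\ap(k)$, hence the centre vanishes; the root-system claim is declared ``clear.'' The reductivity of $\g^{[k]}$ is not argued inside the lemma itself---the paper relies on the earlier remark in Section~\ref{subs:Z-grad-versus} that $\Phi\vert_\q$ is non-degenerate (of which $\g^{[k]}$ is a special case). You have simply made that step explicit, and your alternative route via the fixed-point subalgebra of a finite-order automorphism is also one the paper uses later (in the proof of Proposition~\ref{prop:simmetri-d_ap}).
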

\begin{proof}
The centre of $\g^{[k]}$ (if any) belongs to the centre of $\g_\ap(0)$. As the centre of $\g_\ap(0)$ is
one-dimensional and it acts non-trivially on $\g_\ap(k)$, $\g^{[k]}$ must be semisimple. The rest is clear.
\end{proof}

The passage from $\g$ to $\g^{[k]}$ is a particular case of the general construction outlined in 
Section~\ref{subs:Z-grad-versus} (a passage from $\g$ to $\q$). Because this time we begin with a
$(\BZ,\ap)$-grading, it is possible to say more on the relevant details and the factor $F$. As a result, we end up with an explicit formula for $\gamma_\ap(k)$.
Each graded part $\g^{[k]}(i)=\g_\ap(ki)$ of $\g^{[k]}$ is a simple $\g_\ap(0)$-module. Therefore, the $\BZ$-grading 
of $\g^{[k]}$ is given by a simple root of $\g^{[k]}$. Clearly, this root, say $\beta$, is just the unique 
minimal root in $\Delta_\ap(k)$. Although $\g^{[k]}$ is not necessarily simple, one can write
$\g^{[k]}=\ka\dotplus\es$, where $\ka$ is {\bf simple}, $\es$ is semisimple, and $\beta$ is a simple root of $\ka$.
In this case, the whole of $\es$ lies in $\g_\ap(0)$. Therefore $\g_\ap(0)=\ka_\beta(0)\dotplus\es$ and
$\ka_\beta(i)=\g_\ap(ki)$ for $i\ne 0$. Let $\ov{\vp}_\beta$ be the fundamental weight of $\ka$
(= of $\g^{[k]}$) corresponding to $\beta$.

\begin{prop}    \label{prop:svyaz}
$\vp_\ap=\displaystyle k\frac{(\ap,\ap)}{(\beta,\beta)}\cdot\ov{\vp}_\beta $.
\end{prop}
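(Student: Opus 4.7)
The plan is to view both $\vp_\ap$ and $\ov{\vp}_\beta$ as elements of $\te^*$, the second extended by zero from the Cartan subalgebra of $\ka$ to that of $\es$ inside $\te$, and then to prove proportionality by showing that both functionals vanish on the same hyperplane of $\te$. The proportionality constant is then fixed by evaluating on a convenient coroot, for which $\beta^\vee$ is the natural choice.

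The key step is the following observation. The decomposition $\g_\ap(0)=\ka_\beta(0)\dotplus\es$ induces a splitting of the simple roots of $\g_\ap(0)$: if $\Pi_\ka$ denotes the simple roots of $\ka$ (compatible with our choice of Borel) and $\Pi_\es$ the simple roots of $\es$, then $(\Pi_\ka\setminus\{\beta\})\sqcup\Pi_\es=\Pi\setminus\{\ap\}$. By definition of fundamental weight of $\g$, the functional $\vp_\ap$ vanishes on every coroot $\gamma^\vee$ with $\gamma\in\Pi\setminus\{\ap\}$. In turn, $\ov{\vp}_\beta$ vanishes on $\gamma^\vee$ for $\gamma\in\Pi_\ka\setminus\{\beta\}$ by definition of fundamental weight of $\ka$, and it vanishes on the Cartan subalgebra of $\es$---hence on $\gamma^\vee$ for $\gamma\in\Pi_\es$---by the extension-by-zero convention. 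Therefore $\vp_\ap$ and $\ov{\vp}_\beta$ both lie in the one-dimensional subspace of $\te^*$ that annihilates $\{\gamma^\vee\mid\gamma\in\Pi\setminus\{\ap\}\}$, and so $\vp_\ap=c\,\ov{\vp}_\beta$ for some scalar $c$.

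To compute $c$, I would pair with $\beta^\vee$. On the one hand, $\langle\ov{\vp}_\beta,\beta^\vee\rangle=1$ by definition of fundamental weight. On the other, $\beta\in\Delta_\ap(k)$ has $\ap$-height $k$, and when $\beta$ is expanded in $\Pi$ the terms other than $\ap$ are orthogonal to $\vp_\ap$; hence $(\vp_\ap,\beta)=k(\vp_\ap,\ap)=k(\ap,\ap)/2$, which gives $\langle\vp_\ap,\beta^\vee\rangle=2(\vp_\ap,\beta)/(\beta,\beta)=k(\ap,\ap)/(\beta,\beta)$. Combining the two sides yields $c=k(\ap,\ap)/(\beta,\beta)$, which is the claimed identity.

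The only delicate point is the bookkeeping between the canonical form $(\,,\,)$ on $\te^*$ (attached to $\g$) and the fact that $\ov{\vp}_\beta$ is intrinsically an object of $\ka$, whose natural invariant form differs from $\Phi\vert_\ka$ by the factor $F$ of Section~\ref{subs:transit-factor}. This potential confusion is harmless here because the coroot $\beta^\vee\in\te$ and the pairing $\langle\ov{\vp}_\beta,\beta^\vee\rangle=1$ are independent of any choice of invariant bilinear form, so the entire computation can be carried out uniformly inside the single form $(\,,\,)$ of $\g$.
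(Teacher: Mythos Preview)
Your proof is correct and follows essentially the same approach as the paper's: establish proportionality of $\vp_\ap$ and $\ov{\vp}_\beta$, then pin down the constant by pairing with $\beta^\vee$. The only difference is presentational---the paper states proportionality by noting that both weights generate the one-dimensional centre of $\g_\ap(0)$, whereas you unpack this by exhibiting the common annihilator $\{\gamma^\vee\mid\gamma\in\Pi\setminus\{\ap\}\}$ via the splitting $(\Pi_\ka\setminus\{\beta\})\sqcup\Pi_\es=\Pi\setminus\{\ap\}$; the evaluation on $\beta^\vee$ is identical in both.
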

\begin{proof}
Since either of the weights $\vp_\ap$ and $\ov{\vp}_\beta$ generates the one-dimensional centre of
$\g_\ap(0)$, these are proportional.
By the assumption, $(\vp_\ap, \ap^\vee)=1$ and $(\ov{\vp}_\beta, \beta^\vee)=1$. On the other 
hand, since $\beta$ is a root in $\Delta_\ap(k)$, we have
$(\vp_\ap,\beta^\vee)=k(\vp_\ap,\ap){\cdot} \frac{2}{(\beta,\beta)}=k{\cdot} \frac{(\ap,\ap)}{(\beta,\beta)}$.
Hence $\vp_\ap/\ov{\vp}_\beta=k {\cdot}\frac{(\ap,\ap)}{(\beta,\beta)}$.
\end{proof}

\begin{thm}     \label{thm:gen-formula} 
For any $\ap\in\Pi$ and $1\le k\le d $, one has
$\gamma_\ap(k)=\displaystyle \frac{k }{2h^* r_\ap}\sum_{i\ge 1} q_\ap(ki)$. In particular,
$\gamma_\ap(d)=\displaystyle \frac{d q_\ap(d) }{2h^* r_\ap}$.
\end{thm}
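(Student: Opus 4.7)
The plan is to exploit the passage $\g\rightsquigarrow\g^{[k]}=\bigoplus_{i\in\BZ}\g_\ap(ki)$ and reduce to Theorem~\ref{thm:s-znach-1} applied to $\ka$. By Lemma~\ref{lm:g^k-ss}, $\g^{[k]}$ is semisimple and $\g^{[k]}(0)=\g_\ap(0)$. Write $\g^{[k]}=\ka\dotplus\es$ as in Section~\ref{subs:Z-grad-versus}, where $\ka$ is simple, $\es\subset\g_\ap(0)$, and the $\BZ$-grading of $\ka$ is a $(\BZ{,}\beta)$-grading with $\beta$ the unique minimal root of $\Delta_\ap(k)$. In particular, $\ka_\beta(1)=\g_\ap(k)$ and $\g_\ap(0)=\ka_\beta(0)\dotplus\es$ as ideals.

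Applying Theorem~\ref{thm:s-znach-1} \emph{intrinsically} in $\ka$ (with Killing form $\Phi_\ka$ and the canonical form $(\,,\,)_\ka$) yields the $\eus C_{\ka_\beta(0)}$-eigenvalue on $\ka_\beta(1)$, namely $\bar q_\beta/(2 h^*(\ka)\bar r_\beta)$, where $\bar q_\beta\bar\vp_\beta=\sum_{i\ge 1}|\Delta_\ap(ki)|$ inside $\te_\ka^*$ and $\bar r_\beta=(\bar\theta,\bar\theta)_\ka/(\beta,\beta)_\ka$. Next, the discussion in Section~\ref{subs:Z-grad-versus} shows that $\gamma_\ap(k)$ equals the scaling factor relating the Casimir built from $\Phi|_{\ka_\beta(0)}$ to the one built from $(\Phi_\ka)|_{\ka_\beta(0)}$, times $\bar q_\beta/(2 h^*(\ka)\bar r_\beta)$. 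Invoking Proposition~\ref{prop:transition} to evaluate this factor in terms of $h^*$, $h^*(\ka)$, and $\ind(\ka\hookrightarrow\g)=(\theta,\theta)/(\bar\theta,\bar\theta)$, together with $(\bar\theta,\bar\theta)_\ka=F(\bar\theta,\bar\theta)$ (whence $\bar r_\beta=(\bar\theta,\bar\theta)/(\beta,\beta)$), produces after cancellations the compact intermediate formula
\[
\gamma_\ap(k)=\tfrac{1}{2}\bar q_\beta\,(\beta,\beta),
\]
all scalar products now taken in the canonical form of $\g$.

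To finish, Proposition~\ref{prop:svyaz} gives $\vp_\ap=k\,\tfrac{(\ap,\ap)}{(\beta,\beta)}\bar\vp_\beta$. Comparing the two expressions for $\sum_{i\ge 1}|\Delta_\ap(ki)|$, one in terms of $\bar\vp_\beta$ and one in terms of $\vp_\ap$ (with coefficients $q_\ap(ki)$ from Remark~\ref{rem:refinement-q}(1)), converts to
\[
\bar q_\beta=\frac{k(\ap,\ap)}{(\beta,\beta)}\sum_{i\ge 1}q_\ap(ki).
\]
Substituting and using $(\ap,\ap)=1/(h^*r_\ap)$ yields exactly the stated formula $\gamma_\ap(k)=\frac{k}{2h^*r_\ap}\sum_{i\ge 1}q_\ap(ki)$. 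For the final assertion, setting $k=d$ forces $\Delta_\ap(di)=\varnothing$ for $i\ge 2$, so only the $i=1$ term survives.

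The main obstacle is not conceptual but clerical: one must keep straight which bilinear form is used in each step, and in particular pin down the scaling factor between the Casimir $\tilde{\eus C}_{\ka_\beta(0)}$ (defined via $\Phi|_{\ka_\beta(0)}$, the one that actually contributes to $\eus C_\ap(0)$ since $\es$ acts as zero on $\g_\ap(k)$) and the intrinsic Casimir $\eus C_{\ka_\beta(0)}$ (defined via $(\Phi_\ka)|_{\ka_\beta(0)}$, the one covered by Theorem~\ref{thm:s-znach-1}); all references to two different canonical forms must be carefully cross-checked via Lemma~\ref{lm:dve-formy} and Proposition~\ref{prop:transition}.
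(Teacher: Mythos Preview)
Your proof is correct and follows essentially the same route as the paper: pass to $\g^{[k]}=\ka\dotplus\es$, apply Theorem~\ref{thm:s-znach-1} intrinsically to $(\ka,\beta)$, use Proposition~\ref{prop:transition} for the transition factor between the two Casimirs, and convert $\bar q_\beta$ to the $q_\ap(ki)$'s via Proposition~\ref{prop:svyaz}. The only cosmetic difference is that you isolate the clean intermediate identity $\gamma_\ap(k)=\tfrac{1}{2}\bar q_\beta\,(\beta,\beta)$, whereas the paper carries all the factors through in one displayed computation before simplifying via $\ind(\ka\hookrightarrow\g)\,\bar r_\beta\,(\beta,\beta)=(\theta,\theta)$.
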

\begin{proof}
As above, we consider $\g^{[k]}=\ka\dotplus\es$ and the  simple root $\beta$ of $\ka$ such that
$\ka_\beta(i)=\g_\ap(ki)$. For the $(\BZ{,}\beta)$-grading of the simple algebra $\ka$, we consider the 
same relevant objects as for $(\g,\ap)$. To distinguish them, the former will be marked by `bar'
(cf. $\vp_\ap$ versus $\ov{\vp}_\beta$). This includes $\ov{q}_\beta,\ov{\mathcal R}_\beta, \ov{r}_\beta$,
etc. (see below).

\textbullet \quad Since $\bigsqcup_{i\in\BZ}\Delta_\ap(ki)=\bigsqcup_{i\in\BZ}\Delta_\beta(i)$ is the 
partition of the root system of $(\ka,\te)$ corresponding to $\beta$, we have 
$|\ov{\mathcal R}_\beta|=\sum_{i\ge 1}|\Delta_\ap(ki)|=\ov{q}_\beta\ov{\vp}_\beta$. On the other hand, 
this sum equals $\sum_{i\ge 1}q_\ap(ki)\vp_\ap$. Invoking Proposition~\ref{prop:svyaz},
we obtain 
\[
   \ov{q}_\beta=k \frac{(\ap,\ap)}{(\beta,\beta)}\sum_{i\ge 1}q_\ap(ki) .
\]
Let $\ov{\eus C}_\beta(0)\in\eus U(\ka)$ be Casimir element associated with the Levi subalgebra
$\ka_\beta(0)\subset\ka$. 
It is important to understand that $\ov{\eus C}_\beta(0)$ is defined via the use of the Killing form 
$\Phi_\ka$ on $\ka$.
Let $\ov{\gamma}_\beta(i)$ denote the eigenvalue of $\ov{\eus C}_\beta(0)$ on $\ka_\beta(i)$.
Set $\ov{r}_\beta=(\ov{\theta},\ov{\theta})/(\beta,\beta)$, where $\ov{\theta}$ is the highest root of $\ka$.
By Theorem~\ref{thm:s-znach-1} applied to $\ka$ and $\beta$, we have
$\ov{\gamma}_\beta(1)=\displaystyle \frac{\ov{q}_\beta}{2h^*(\ka) {\cdot}\ov{r}_\beta}$.

\textbullet \quad Our next step is to compare $\gamma_\ap(k)$ and $\ov{\gamma}_\beta(1)$. Since
$\g_\ap(0)=\ka_\beta(0)\dotplus\es$ and $\es$ acts trivially on each $\g_\ap(ki)$, one can safely remove 
from $\eus C_\ap(0)$ the summands corresponding to the dual bases for $\es$, while computing
$\gamma_\ap(ki)$. This "almost" yields
$\ov{\eus C}_\beta(0)$. The only difference is that the dual bases for $\ka$ occurring in two Casimir elements
are defined via the use of different Killing forms ($\Phi$ and $\Phi_\ka$, respectively). Hence the 
eigenvalues of $\eus C_\ap(0)$ and $\ov{\eus C}_\beta(0)$ on all $\g_\ap(ki)$ are proportional.
More precisely, since the eigenvalues are computed via the use of the canonical bilinear form
on $\te^*$ and $\te^*_\ka$, respectively, the 
transition factor equals the ratio of these two canonical forms.
By Proposition~\ref{prop:transition}(i), this factor equals $T=\displaystyle \frac{h^*(\ka)}{h^*{\cdot} \ind (\ka\hookrightarrow \g)}$.
Gathering together previous formulae, we obtain
\beq \label{eq:polovina}
  \gamma_\ap(k) =T{\cdot}\ov{\gamma}_\beta(1)=
  \frac{h^*(\ka)}{h^*{\cdot} \ind (\ka\hookrightarrow \g)}{\cdot}
  \frac{\ov{q}_\beta}{2h^*(\ka) {\cdot}\ov{r}_\beta}=
  \frac{k{\cdot}(\ap,\ap)\sum_{i\ge 1}q_\ap(ki)}{2h^*{\cdot} \ind(\ka\hookrightarrow \g){\cdot}\ov{r}_\beta {\cdot}(\beta,\beta)}.
\eeq
Proposition~\ref{prop:transition}(ii) says that $\ind (\ka\hookrightarrow \g)=\displaystyle (\theta,\theta)/(\ov{\theta},\ov{\theta})$. Hence 
$\ind (\ka\hookrightarrow \g){\cdot}\ov{r}_\beta{\cdot} (\beta,\beta)=
(\theta,\theta)$, and one simplifies  Eq.~\eqref{eq:polovina}  to 
\[
    \frac{k{\cdot}(\ap,\ap)\sum_{i\ge 1}q_\ap(ki)}{2h^* {\cdot}(\theta,\theta)}=
     \frac{k}{2h^*r_\ap}\sum_{i\ge 1}q_\ap(ki) ,
\]
as required.
\end{proof}

\begin{cl}    \label{cor:sravnenie-2}
For any $\ap\in\Pi$, one has
$d \bigl(q_\ap+q_\ap(d )\bigr)=2h^*r_\ap$ 
and $2h^*r_\ap/d \in \BN$. 
In particular, if $d =2$, then $q_\ap+q_\ap(2)=h^*r_\ap$. 
\end{cl}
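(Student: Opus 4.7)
The plan is to derive the identity directly from the two facts we already have in hand, namely the closed formula of Theorem~\ref{thm:gen-formula} and the relation $d\gamma_\ap(1)+\gamma_\ap(d)=1$ from Corollary~\ref{cor:2.2}(i).

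First I would specialise Theorem~\ref{thm:gen-formula} to the two extreme values of $k$. Taking $k=1$, the sum $\sum_{i\ge 1}q_\ap(i)$ is simply $q_\ap$, so $\gamma_\ap(1)=q_\ap/(2h^*r_\ap)$, which reproves the first half of Theorem~\ref{thm:s-znach-1}. Taking $k=d$, the only index $i\ge 1$ for which $q_\ap(di)\ne 0$ is $i=1$, because $\Delta_\ap(j)=\varnothing$ for $j>d$; hence $\gamma_\ap(d)=d\,q_\ap(d)/(2h^*r_\ap)$, which is exactly the second formula displayed in the theorem.

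Now substituting these two expressions into Corollary~\ref{cor:2.2}(i) gives
\[
  \frac{d\,q_\ap}{2h^*r_\ap}+\frac{d\,q_\ap(d)}{2h^*r_\ap}=1,
\]
and clearing the denominator yields $d\bigl(q_\ap+q_\ap(d)\bigr)=2h^*r_\ap$, as required. Since $q_\ap$ and $q_\ap(d)$ are positive integers by Lemma~\ref{prop:edge-labels} and Remark~\ref{rem:refinement-q}(1), the left-hand side is $d$ times an integer, so $2h^*r_\ap/d=q_\ap+q_\ap(d)\in\BN$. The case $d=2$ is an immediate specialisation.

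There is essentially no obstacle here: the content is entirely packaged into Theorem~\ref{thm:gen-formula} and Corollary~\ref{cor:2.2}(i), and the corollary amounts to recognising that in the identity $d\gamma_\ap(1)+\gamma_\ap(d)=1$ both summands have the common denominator $2h^*r_\ap$, with numerators $d\,q_\ap$ and $d\,q_\ap(d)$ respectively. The only point worth flagging for the reader is the observation that the sum defining $\gamma_\ap(d)$ collapses to its $i=1$ term, which is what makes $q_\ap(d)$ (rather than the full partial sum) appear.
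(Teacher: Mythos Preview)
Your proof is correct and follows essentially the same route as the paper: the paper simply says that Theorems~\ref{thm:s-znach-1} and~\ref{thm:gen-formula} give two expressions for $\gamma_\ap(d)$, and equating them ``yields everything''. Your version routes through Corollary~\ref{cor:2.2}(i) instead of quoting Theorem~\ref{thm:s-znach-1} directly, but since that corollary is an immediate reformulation of Theorem~\ref{thm:s-znach-1}, the argument is the same (and you spell out the integrality of $2h^*r_\ap/d$ more explicitly than the paper does).
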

\begin{proof}
Theorems~\ref{thm:s-znach-1} and \ref{thm:gen-formula} provide two different formulae for $\gamma_\ap(d )$, 
which yields everything. 
\end{proof}

To apply Theorem~\ref{thm:gen-formula}, one has to know the integers $\{q_\ap(j)\mid 1\le j\le d \}$.
Corollary~\ref{cor:sravnenie-2} allows us to compute $q_\ap(d)$ and thereby settles the problem for $d =2$. For $d>2$, there are some relations between $\{q_\ap(i)\mid i=1,\dots,d\}$, which allows us to solve this problem.

\begin{prop}   \label{prop:simmetri-d_ap}
If $d \ge 2$ and $1\le i\le d-1$, then $q_\ap(i)=q_\ap(d-i)$. 
\end{prop}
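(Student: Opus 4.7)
My proof plan is in two parts: a reduction of the statement to a dimension identity, and then the proof of that identity.

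\textbf{Step 1 (Reduction).} For any $\gamma\in\Delta_\ap(i)$, because $(\vp_\ap,\ap_j)=0$ for $\ap_j\in\Pi\setminus\{\ap\}$ and $(\vp_\ap,\ap)=(\ap,\ap)/2$, one has
\[
(\gamma,\vp_\ap) \;=\; \hot_\ap(\gamma)\,(\ap,\vp_\ap) \;=\; \tfrac{i(\ap,\ap)}{2}.
\]
Summing this over $\gamma\in\Delta_\ap(i)$ and using $|\Delta_\ap(i)|=q_\ap(i)\vp_\ap$ gives
\[
q_\ap(i)\,(\vp_\ap,\vp_\ap) \;=\; \tfrac{1}{2}\,i\,(\ap,\ap)\,\dim\g_\ap(i).
\]
Therefore the proposition is equivalent to the numerical identity
\[
i\cdot\dim\g_\ap(i) \;=\; (d-i)\cdot\dim\g_\ap(d-i), \qquad 1\le i\le d-1. \tag{$\star$}
\]

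\textbf{Step 2 (Dimension identity).} I would prove $(\star)$ by exploiting the bracket map $\ad\colon\g_\ap(i)\otimes\g_\ap(d-i)\to\g_\ap(d)$, which is surjective for $1\le i\le d-1$: this follows from the fact that $\me^+$ is generated as a Lie algebra by $\g_\ap(1)$, which inductively yields $\g_\ap(k+1)=[\g_\ap(1),\g_\ap(k)]$ and hence $\g_\ap(d)=[\g_\ap(i),\g_\ap(d-i)]$. Because $\g_\ap(i),\g_\ap(d-i),\g_\ap(d)$ are all simple $\g_\ap(0)$-modules, this gives a non-degenerate $\g_\ap(0)$-equivariant pairing and, in particular, an embedding $\g_\ap(d-i)\hookrightarrow \g_\ap(i)^*\otimes\g_\ap(d)$. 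Combined with the Weyl dimension formula applied to the highest weights $\lambda_i,\lambda_{d-i}$ of $\g_\ap(i),\g_\ap(d-i)$ (and the highest weight $\theta$ of $\g_\ap(d)$, the unique maximal element of $\Delta_\ap(d)$), one extracts the multiplicative relation $(\star)$.

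\textbf{Main obstacle.} The crucial step is deducing $(\star)$ from the Weyl dimension formula together with the bracket surjection, particularly when $\dim\g_\ap(d)>1$ (e.g.\ for $\GR{G}{2}$ at a short simple root, or for $\GR{E}{6}$ at the branching node, where $n_3=2$). The key technical point is to identify the highest weight $\lambda_{d-i}$ as $\theta-\lambda_i$: since $[v_{\lambda_i},v_{\lambda_{d-i}}]$ is annihilated by every $e_\beta$ with $\beta\in\Pi\setminus\{\ap\}$ (by the Jacobi identity and the highest-weight property of $v_{\lambda_i},v_{\lambda_{d-i}}$), it is a $\g_\ap(0)$-highest-weight vector in $\g_\ap(d)$ of weight $\lambda_i+\lambda_{d-i}$, hence equals $\theta$ up to the kernel. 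Handling the kernel possibility and converting the resulting relation into the precise factor $i/(d-i)$ via Weyl's formula is the delicate part; an alternative end-game would be a direct case-by-case verification using the tables of $q_\ap(i)$ from the Appendix.
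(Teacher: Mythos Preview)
Your Step~1 reduction is correct and is in fact the content of Proposition~\ref{prop:hot_ap-etc} in the paper (which is proved later, independently, but by the same elementary calculation you give). The problem is Step~2: you do not actually prove $(\star)$. You acknowledge this yourself when you call the conversion via Weyl's formula ``the delicate part'' and offer case-by-case verification from the Appendix as a fallback. But the Appendix tables are \emph{computed using} this proposition (see the Remark following it and Example~\ref{ex:primer-E8}), so that fallback is circular. As for the Weyl approach: even granting $\lambda_{d-i}=\theta-\lambda_i$ (for which your kernel argument is incomplete, since $[v_{\lambda_i},v_{\lambda_{d-i}}]$ could a priori vanish), you give no mechanism by which the products $\prod_{\beta\in\Delta_\ap^+(0)}(\lambda_i+\rho_\ap(0),\beta)$ and $\prod_{\beta\in\Delta_\ap^+(0)}(\theta-\lambda_i+\rho_\ap(0),\beta)$ would be in ratio $(d-i):i$. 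There is no visible source for the integers $i$ and $d-i$ in those products; the center of $\g_\ap(0)$, which is where the $\ap$-height lives, contributes nothing to the Weyl dimension formula for the semisimple part.

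The paper's proof avoids all of this with a one-line representation-theoretic observation. The subalgebra $\g^{[d]}=\g_\ap(-d)\oplus\g_\ap(0)\oplus\g_\ap(d)$ is semisimple (its centre would have to lie in the one-dimensional centre of $\g_\ap(0)$, but that acts nontrivially on $\g_\ap(d)$). For $1\le i\le d-1$, the space $\g_\ap(i)\oplus\g_\ap(i-d)$ is a $\g^{[d]}$-module (it is the $\zeta^i$-eigenspace of the inner order-$d$ automorphism whose fixed points are $\g^{[d]}$). The sum of $\te$-weights in any finite-dimensional module of a semisimple Lie algebra is zero, so $|\Delta_\ap(i)|+|\Delta_\ap(i-d)|=0$, i.e.\ $(q_\ap(i)-q_\ap(d-i))\vp_\ap=0$. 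This bypasses dimensions, highest weights, and Weyl's formula entirely.
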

\begin{proof}
Consider $\g^{[d]}=\g_\ap(-d)\oplus\g_\ap(0)\oplus\g_\ap(d)$. Then 
$\g^{[d]}$ is the fixed point subalgebra of an automorphism $\psi\in \text{Int}(\g)$ of order $d$. If 
$\zeta =\sqrt[d]1$ is primitive and $1\le i\le d-1$, then the eigenspace of $\psi$ corresponding to 
$\zeta^i$ is
$\g_i:=\g_\ap(i)\oplus \g_\ap(i-d)$.  Since $\g^{[d]}$ is semisimple (Lemma~\ref{lm:g^k-ss}), 
the sum of weights of the $\g^{[d]}$-module $\g_i$ equals $0$. That is, 
\[
    |\Delta_\ap(i)|+ |\Delta_\ap(i-d)|=(q_\ap(i)-q_\ap(d-i))\vp_\ap=0 .  \qedhere
\]
\end{proof}
\begin{rmk}
If $d=3$, then $q_\ap(1)=q_\ap(2)$. That is, Corollary~\ref{cor:sravnenie-2} and 
Proposition~\ref{prop:simmetri-d_ap} are sufficient for computing the numbers $\{q_\ap(j)\}$.
For $d\ge 4$, one can also consider all $\g^{[k]}$ with $k> d/2$, which yields more relations. For 
instance, if $d=4$ and $k=3$, then one get the relation $q_\ap(4)+q_\ap(1)=q_\ap(2)$. All these extra 
relations are sufficient for leisure calculations of all $\{q_\ap(j)\}$. Note that the maximal possible value
$d=6$ is attained only for $\GR{E}{8}$ (once).
\end{rmk}

For future use, we record the following by-product of the above theory.
\begin{prop}      \label{prop:2gamma>1gamma}
For any $\ap\in\Pi$, we have $2\gamma_\ap(1)>\gamma_\ap(2)$. Moreover, if $d$ is odd, then
$\gamma_\ap(1)>\gamma_\ap(2)$.
\end{prop}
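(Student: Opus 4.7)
The plan is to derive both inequalities directly from the closed-form expression in Theorem~\ref{thm:gen-formula} together with the symmetry of Proposition~\ref{prop:simmetri-d_ap}. Write
\[
   \gamma_\ap(1)=\frac{1}{2h^*r_\ap}\sum_{i\ge 1}q_\ap(i),\qquad
   \gamma_\ap(2)=\frac{1}{h^*r_\ap}\sum_{i\ge 1}q_\ap(2i).
\]
Subtracting gives
\[
   2\gamma_\ap(1)-\gamma_\ap(2)=\frac{1}{h^*r_\ap}\Bigl(\sum_{i\ge 1}q_\ap(i)-\sum_{i\ge 1}q_\ap(2i)\Bigr)
   =\frac{1}{h^*r_\ap}\sum_{i\text{ odd}}q_\ap(i).
\]
Since $\ap$ itself is a root of $\g_\ap(1)$, we have $q_\ap(1)>0$, hence the right-hand side is strictly positive. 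This establishes the first assertion without any case analysis.

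For the second assertion, compute instead
\[
   \gamma_\ap(1)-\gamma_\ap(2)=\frac{1}{2h^*r_\ap}\Bigl(\sum_{i\text{ odd}}q_\ap(i)-\sum_{i\text{ even}}q_\ap(i)\Bigr).
\]
Now invoke Proposition~\ref{prop:simmetri-d_ap}: $q_\ap(i)=q_\ap(d-i)$ for $1\le i\le d-1$. When $d$ is odd, the involution $i\mapsto d-i$ swaps the parities of indices within $\{1,2,\dots,d-1\}$ and matches $q_\ap$-values, so the contributions from odd $i\in[1,d-1]$ and even $i\in[1,d-1]$ cancel exactly. The only surviving term is $q_\ap(d)$, which sits on the odd side because $d$ is odd. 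Hence
\[
   \gamma_\ap(1)-\gamma_\ap(2)=\frac{q_\ap(d)}{2h^*r_\ap}>0,
\]
as $q_\ap(d)>0$.

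There is no real obstacle here; the argument is a two-line consequence of the identities already established. The only care needed is in correctly pairing odd and even indices under $i\mapsto d-i$, and in observing that $d$ odd forces $q_\ap(d)$ to land in the odd sum rather than be cancelled.
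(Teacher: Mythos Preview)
Your proof is correct and follows essentially the same approach as the paper: both arguments apply the formula of Theorem~\ref{thm:gen-formula} directly and, for the odd-$d$ case, invoke the symmetry $q_\ap(i)=q_\ap(d-i)$ from Proposition~\ref{prop:simmetri-d_ap}. Your version is slightly more explicit in that you compute the exact value $\gamma_\ap(1)-\gamma_\ap(2)=q_\ap(d)/(2h^*r_\ap)$, whereas the paper phrases the same cancellation as $2\sum_{i\ge 1}q_\ap(2i)=\sum_{i=1}^{d-1}q_\ap(i)<q_\ap$.
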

\begin{proof}
We have $\gamma_\ap(1)=\displaystyle \frac{\sum_{i\ge 1} q_\ap(i)}{2h^* r_\ap}=\frac{q_\ap}{2h^* r_\ap}$ 
and
$\gamma_\ap(2)=\displaystyle \frac{2\sum_{i\ge 1} q_\ap(2i)}{2h^* r_\ap}$, which yields the first inequality.
For $d$ odd, it follows from Proposition~\ref{prop:simmetri-d_ap} that
$2\sum_{i\ge 1} q_\ap(2i)=\sum_{i=1}^{d-1} q_\ap(i)< q_\ap$.
\end{proof}

\begin{ex}    \label{ex:inequal}
If $d=d_\ap$ is even, then it can happen that $2\gamma_\ap(1)> \gamma_\ap(2) > \gamma_\ap(1)$.
For instance, look up $(\GR{E}{8},\ap_5)$ or $(\GR{E}{8},\ap_6)$ or $(\GR{F}{4},\ap_2)$ in tables in
Appendix~\ref{sect:App}.
\end{ex}
Another interesting relation is

\begin{prop}     \label{prop:k>d/2}
If $k> d/2$ and $\g^{[k]}=\ka\dotplus\es$ as above, then \ 
$\displaystyle  \frac{q_\ap(k)}{\gamma_\ap(k)}=\frac{2h^*}{k}{\cdot}\frac{(\beta,\beta)}{(\ap,\ap)}{\cdot}\ind (\ka\hookrightarrow \g)$.
In particular, for $k=d$, one obtains \ $\displaystyle \frac{q_\ap(d)}{\gamma_\ap(d)}=\frac{2h^*r_\ap}{d}$.
\end{prop}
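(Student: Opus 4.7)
First, I would observe that the hypothesis $k>d/2$ forces $\Delta_\ap(jk)=\varnothing$ for $|j|\ge 2$, so the sum $\sum_{i\ge 1}q_\ap(ki)$ in Theorem~\ref{thm:gen-formula} collapses to its single term $q_\ap(k)$. This yields directly
\[
  \gamma_\ap(k)=\frac{k\,q_\ap(k)}{2h^* r_\ap},\qquad \text{equivalently}\qquad \frac{q_\ap(k)}{\gamma_\ap(k)}=\frac{2h^* r_\ap}{k},
\]
and the "in particular" assertion follows by specialising to $k=d$. What is left is to show that this right-hand side equals $(2h^*/k)\cdot(\beta,\beta)/(\ap,\ap)\cdot\ind(\ka\hookrightarrow\g)$, i.e., to verify the identity
\[
  r_\ap=\frac{(\beta,\beta)}{(\ap,\ap)}\cdot\ind(\ka\hookrightarrow\g).
\]

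Next, I would invoke Proposition~\ref{prop:transition}(ii) to rewrite $\ind(\ka\hookrightarrow\g)=(\theta,\theta)/(\ov{\theta},\ov{\theta})$, where $\ov{\theta}$ is the highest root of $\ka$. Since $r_\ap=(\theta,\theta)/(\ap,\ap)$, the identity collapses to $(\beta,\beta)=(\ov{\theta},\ov{\theta})$, that is, $\beta$ must be a long root of $\ka$. The same hypothesis $k>d/2$ now intervenes a second time: it forces $\g^{[k]}(i)=\g_\ap(ki)=0$ for $|i|\ge 2$, so the $(\BZ,\beta)$-grading of $\ka$ has height $d_\beta=1$ and $\ov{\vp}_\beta$ is cominuscule for $\ka$. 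It then suffices to cite the folk fact that a cominuscule simple root of a simple Lie algebra is always long: trivial in the simply-laced case, immediate from inspection of the coefficients of $\ov{\theta}$ in types $\GR{B}{m},\GR{C}{m}$, and vacuous in $\GR{F}{4},\GR{G}{2}$ where no simple root is cominuscule.

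The main conceptual obstacle is recognising the double role played by the hypothesis $k>d/2$: once to collapse the sum in Theorem~\ref{thm:gen-formula}, and once more to force $d_\beta=1$ so that the factor $\ov{r}_\beta=(\ov{\theta},\ov{\theta})/(\beta,\beta)$ implicit in the derivation of Theorem~\ref{thm:gen-formula} trivialises to $1$. Once this is seen, the rest is a mechanical rewriting using Propositions~\ref{prop:transition} and \ref{prop:svyaz}, with essentially no new computation required.
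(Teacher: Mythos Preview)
Your argument is correct and takes a somewhat different route from the paper's. The paper derives the two quantities separately: it uses that $\ov{\vp}_\beta$ is cominuscule in $\ka$ to write $|\Delta_\ap(k)|=h^*(\ka)\,\ov{\vp}_\beta$, obtaining $\gamma_\ap(k)=h^*(\ka)/\bigl(2h^*\ind(\ka\hookrightarrow\g)\bigr)$ and $q_\ap(k)=h^*(\ka)(\beta,\beta)/\bigl(k(\ap,\ap)\bigr)$ (these are Eqs.~\eqref{eq:gamma(k)} and~\eqref{eq:q(k)}), and then divides. For $k=d$ it simplifies via the concrete observation that $\beta$, being the minimal root in $\Delta_\ap(d)$, is $W_\ap$-conjugate to $\theta$; hence $\ov{\theta}=\theta$, $\ind(\ka\hookrightarrow\g)=1$, and $(\beta,\beta)/(\ap,\ap)=r_\ap$.

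Your approach is leaner: you read the ratio $q_\ap(k)/\gamma_\ap(k)=2h^*r_\ap/k$ straight off Theorem~\ref{thm:gen-formula} (the sum collapses), and then match it to the stated formula by proving that $\beta$ is long in $\ka$ whenever $k>d/2$, invoking the classification fact ``cominuscule $\Rightarrow$ long''. This buys you a slight strengthening---the simplified form $2h^*r_\ap/k$ holds for \emph{all} $k>d/2$, not only $k=d$---at the price of a small case check. The paper's route, in turn, yields the individual expressions \eqref{eq:gamma(k)}, \eqref{eq:q(k)} in terms of $h^*(\ka)$, which are informative in their own right, and its treatment of $k=d$ avoids any type-by-type inspection.
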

\begin{proof}
{\sf 1)} \ If $k>d/2$, then $\g^{[k]}=\g_\ap(-k)\oplus\g_\ap(0)\oplus\g_\ap(k)$ has only three summands 
and $\g_\ap(k)$ is commutative. That is, $\ov{\vp}_\beta$ is cominuscule and 
$|\Delta_\ap(k)|=h^*(\ka){\cdot}\ov{\vp}_\beta$. Hence the eigenvalue of $\ov{\eus C}_\beta(0)$ on 
$\ka_\beta(1)=\g_\ap(k)$ equals $1/2$, see Corollary~\ref{cor:2.2}. Using the 
transition factor $T=\displaystyle \frac{h^*(\ka)}{h^*{\cdot} \ind (\ka\hookrightarrow \g)}$ (cf. 
Theorem~\ref{thm:gen-formula}), we obtain
\beq    \label{eq:gamma(k)}
        \gamma_\ap(k)=\displaystyle \frac{h^*(\ka)}{2h^*{\cdot} \ind (\ka\hookrightarrow \g)} .
\eeq 
On the other hand, $|\Delta_\ap(k)|=q_\ap(k)\vp_\ap=h^*(\ka)\ov{\vp}_\beta$.
Hence $h^*(\ka)\ov{\vp}_\beta=q_\ap(k){\cdot}\displaystyle k\frac{(\ap,\ap)}{(\beta,\beta)}\cdot\ov{\vp}_\beta$ and 
\beq   \label{eq:q(k)}
q_\ap(k)= \frac{h^*(\ka)}{k}{\cdot} \frac{(\beta,\beta)}{(\ap,\ap)} .
\eeq
Combining Eq.~\eqref{eq:gamma(k)} and~\eqref{eq:q(k)} yields the first assertion.

{\sf 2)} \ If $k=d$, then $\beta$ is the minimal root in $\Delta_\ap(d)$, which is $W_\ap$-conjugate to 
$\theta$, the
maximal root in $\Delta_\ap(d)$. Hence $\beta$ is long and $\ov{\theta}=\theta$. Therefore,
$(\beta,\beta)/(\ap,\ap)=r_\ap$ and
$\ind (\h\hookrightarrow \g)=1$, cf. Proposition~\ref{prop:transition}.
\end{proof}
{\bf Remark.} Comparing Proposition~\ref{prop:k>d/2} and Corollary~\ref{cor:sravnenie-2}, we see that
$\displaystyle \frac{q_\ap(d)}{\gamma_\ap(d)}=q_\ap+q_\ap(d)$ is an integer.

\begin{ex}    \label{ex:primer-E8}
{\sf (1)} \ Consider the $(\BZ{,}\ap_2)$-grading of $\GR{E}{8}$. Here $d=3$ and $q_2=19$ (see
Example~\ref{ex:spisok-q}). By Theorem~\ref{thm:s-znach-1},
$\gamma_{\ap_2}(1)=19/60$ and $\gamma_{\ap_2}(3)=3/60$. Then Corollary~\ref{cor:sravnenie-2}
shows that $q_2(3)=(60/3)-19=1$. Hence $q_2(1)=q_2(2)=9$. Now, using Theorem~\ref{thm:gen-formula}, we compute that $\gamma_{\ap_2}(2)=18/60$.

{\sf (2)} \  Take the $(\BZ{,}\ap_2)$-grading of $\GR{F}{4}$. Here $r_{\ap_2}=2$, $d=4$, and 
$q_2=7$. By Theorem~\ref{thm:s-znach-1}, $\gamma_{\ap_2}(1)=7/36$ and 
$\gamma_{\ap_2}(4)=1-(4{\cdot}7/36)=8/36$. Then Corollary~\ref{cor:sravnenie-2} shows that 
$q_2(4)=(2{\cdot}9{\cdot}2/4)-7=2$. Since $q_2(1)=q_2(3)$ and $q_2(4)+q_2(1)=q_2(2)$, one computes the remaining $q_2(j)$'s. Finally, Theorem~\ref{thm:gen-formula} implies that 
$\gamma_{\ap_2}(2)=10/36$ and $\gamma_{\ap_2}(3)=3/36$.
\end{ex}
The complete calculations of the eigenvalues $\{\gamma_\ap(i)\}$ and integers $\{q_\ap(i)\}$ for
all $(\BZ{,}\ap)$-gradings are gathered in Appendix~\ref{sect:App}.

\section{Eigenvalues of $\eus C_\ap(0)$ in $\bigwedge^k\g_\ap(1)$ and abelian subspaces of 
$\g_\ap(1)$}
\label{sect:3}

\noindent
In this section, we relate eigenvalues of $\eus C_\ap(0)$ to dimensions of abelian subspaces
(=\, commutative subalgebras) of $\g_\ap(1)$. The key role is played by the inequality 
$\gamma_\ap(2)< 2\gamma_\ap(1)$, see  Prop.~\ref{prop:2gamma>1gamma}. As an application of our
theory, we prove that if $d_\ap>1$ and $\ah\subset\g_\ap(1)$ is an abelian subspace, i.e., $[\ah,\ah]=0$,
then $\dim\ah\le  \dim\g_\ap(1)/2$. Our results up to Proposition~\ref{prop:mult-free} are parallel to 
results of~\cite[Sect.\,4]{jlms01} that concern the case of $\BZ_m$-gradings. Furthermore, most 
proofs therein can readily be adapted to the $\BZ$-graded setting.
For this reason, we omit some details.

Let us recall some basic facts on the complexes $(\bigwedge^\bullet\!\g, \partial)$ 
and $(\bigwedge^\bullet\!\g, \textsl{d})$.
We identify $\g$ with $\g^*$, using $\Phi$, and consider
$\bigwedge^\bullet\g$ with the usual differentials:
\[  \textstyle
\text{$\textsl{d}:\bigwedge^l\g \to \bigwedge^{l+1}\g$ \quad and \quad
$\partial:\bigwedge^l\g \to \bigwedge^{l-1}\g.$}
\] 
Here
\[
\partial(x_1\wedge\ldots\wedge x_l):=\sum_{i<j}(-1)^{i+j-1}
[x_i,x_j]\wedge x_1\wedge\ldots \hat{x}_i\dots \hat{x}_j\ldots\wedge x_l 
\]
for $l\ge 2$ and $\partial(x_1)=0$. In particular, 
$\partial(x_1\wedge x_2)=[x_1,x_2]$. 
\\
We regard $\Phi$ as having been extended, in the usual way, via
determinants, from $\g$ to $\bigwedge^\bullet\!\g$. 
More precisely, denoting the extension of $\Phi$ to $\bigwedge^l\g$ by
$\Phi^{(l)}$, we have
\[
\Phi^{(l)}(x_1\wedge\ldots\wedge x_l,y_1\wedge\ldots\wedge y_l):=
\det \| \Phi(x_i,y_j)\| \ .
\] 
Then $\textsl{d}=-\partial^t$.
For $x\in\g$, let $\esi(x)$ be the exterior product operator and 
$i(x)$ the interior product operator in $\bigwedge\g$. That is, 
\[
\esi(x){\cdot}x_1\wedge\ldots\wedge x_l:=x\wedge x_1\ldots\wedge x_l,
\]
\[
i(x){\cdot}x_1\wedge\ldots\wedge x_l:=\sum_{i=1}^l(-1)^{i-1}
\Phi(x,x_i)x_1\wedge\ldots \hat{x_i}\ldots \wedge x_l \ .
\]
Let $\vartheta$ denote the natural extension of the adjoint representation
of $\g$ to $\bigwedge^\bullet\!\g$: 
\[
\vartheta(x){\cdot}x_1\wedge\ldots\wedge x_l:=\sum_{i=1}^l
x_1\wedge\ldots \wedge [x,x_i]\wedge\ldots \wedge x_l \ .
\]
These operators satisfy the following relations for all $x\in\g$:
\beq     \label{eq:3shtuki} 
[\textsl{d},\vartheta(x)]=0, \quad  
[ \partial , \vartheta (x) ]=0 , \quad
\esi(x)\partial+\partial\esi(x)=\vartheta(x) .
\eeq
Let $e_1,\dots,e_N$ be a basis for $\g$ and $e'_1,\dots,e'_N$ the dual basis.
After Koszul \cite[3.4]{kos}, it is known that
\beq   \label{eq:d}
\textsl{d}=\frac{1}{2}\sum_{i=1}^N \esi(e'_i)\vartheta(e_i) \ .
\eeq
Combining Eq.~\eqref{eq:3shtuki} and \eqref{eq:d} yields
\[
2(\textsl{d}\partial+\partial \textsl{d})=\sum_{i=1}^N\vartheta(e'_i)\vartheta(e_i)=\vartheta(\eus C) ,
\]
where $\eus C$ is the Casimir element for $\g$. In the general $\BZ$-graded situation, we choose a basis
$\mathbb B=(e_1,\dots,e_N)$
compatible with grading, which means that $\mathbb B\cap\g(i)$ is a basis for $\g(i)$ for each $i$. 
Let $\mathbb B'=(e'_1,\dots,e'_N)$ be the dual basis. Since $\g(i)^*\simeq \g(-i)$, we have 
$(\mathbb B\cap\g(i))'=\mathbb B'\cap\g(-i)$ is a basis for $\g(-i)$. Any compatible basis yields a splitting of the differential: 
$\textsl{d}=\sum_{i\in\BZ}\textsl{d}_i$, where
\[
    \textsl{d}_i= \frac{1}{2}\sum_{j:\ e_j\in \g(i)} \esi(e'_j)\vartheta(e_j) .
\]
Note that $\textsl{d}_i(\g(j))\subset \begin{cases} \g(i)\otimes\g(j-i), & i\ne j/2 \\
\bigwedge^2\!\g(i), & i=j/2 \end{cases} \ \subset \bigwedge^2\!\g$. In particular, $\textsl{d}_1(\g(2))
\subset \bigwedge^2\!\g(1)$. 
The key technical result for $\g(1)$ and $\textsl{d}_1$ is
\begin{prop}    \label{prop:d_1}
{\sf (i)} \ Let $(e_1,\dots,e_s)$ be a basis for $\g(1)$ and $(e'_1,\dots,e'_s)$ the dual basis for $\g(-1)$.
For any $y,z\in \g(1)$, we have 
\[
     \sum_{i=1}^s [e_i,y]\wedge [e'_i,z]=-\textsl{d}_1([y,z]) .
\]
{\sf (ii)} \ For any $x\in\g(2)$ and $u,v\in\g(-1)$, we have 
$\Phi^{(2)}(\textsl{d}_1(x), u\wedge v)=-\Phi(x,\partial(u\wedge v))$.
\end{prop}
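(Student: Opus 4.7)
The plan is to dispatch (ii) first, since it follows from a short adjointness argument, and then attack (i) by explicit expansion via the Jacobi identity and invariance of $\Phi$. For (ii), I would appeal to the general duality $\textsl{d}=-\partial^t$ with respect to the extended Killing form $\Phi^{(\bullet)}$, already recorded in the preamble; it is a direct consequence of the Koszul formula \eqref{eq:d} together with \eqref{eq:3shtuki}. Applied to $x\in\g(2)$ and $u\wedge v\in\bigwedge^2\g(-1)$, this gives
\[
\Phi^{(2)}(\textsl{d}(x),u\wedge v)\;=\;-\Phi(x,\partial(u\wedge v))\;=\;-\Phi(x,[u,v]).
\]
Because $\Phi$ pairs $\g(k)$ nontrivially only with $\g(-k)$, the only component of the bi-graded decomposition of $\textsl{d}(x)$ that pairs nontrivially with $u\wedge v\in\bigwedge^2\g(-1)$ under $\Phi^{(2)}$ is the $\bigwedge^2\g(1)$-component. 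By the grading-compatible splitting $\textsl{d}=\sum_i\textsl{d}_i$, that component is precisely $\textsl{d}_1(x)$, and (ii) follows.

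For (i), the strategy is to expand $\textsl{d}_1([y,z])$ using the formula extracted from the splitting of \eqref{eq:d} and then apply the Jacobi identity. Writing $\textsl{d}_1([y,z])$ in the form $\tfrac{1}{2}\sum_i e_i\wedge[e'_i,[y,z]]$ with $(e_i),(e'_i)$ the indicated dual bases, I would apply $[e'_i,[y,z]]=[[e'_i,y],z]+[y,[e'_i,z]]$ to split it into two sums. Each is then rearranged using the $\g(0)$-invariance of the canonical element $\sum_i e_i\otimes e'_i\in\g(1)\otimes\g(-1)$, which is a restatement of $\Phi$-invariance and reads $\sum_i([u,e_i]\otimes e'_i+e_i\otimes[u,e'_i])=0$ for all $u\in\g(0)$. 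Combined with the antisymmetry of the wedge product and the compensating symmetry $(y,z,e_i,e'_i)\leftrightarrow(z,y,e'_i,e_i)$, the two Jacobi branches merge into $-\sum_i[e_i,y]\wedge[e'_i,z]$, the factor $\tfrac{1}{2}$ in \eqref{eq:d} being absorbed by the twofold symmetrisation.

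The main obstacle is the combinatorial bookkeeping: tracking signs from the wedge and from the transpose in $\textsl{d}=-\partial^t$, and verifying that the two Jacobi branches recombine with the correct relative sign. If the direct manipulation becomes unwieldy, a cleaner route is to pair both sides of (i) against an appropriately chosen dual element in $\bigwedge^2\g$ and reduce the matter to an identity on iterated brackets that follows from the ad-invariance of $\Phi$, using (ii) to convert one side of the resulting pairing into a bracket expression matching the other.
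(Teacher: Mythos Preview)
Your argument for (ii) is correct and is the standard route: the adjointness $\textsl{d}=-\partial^t$ together with the $\Phi^{(2)}$-orthogonality of the graded pieces of $\bigwedge^2\g$ singles out precisely the $\textsl{d}_1$-component when pairing against $u\wedge v\in\bigwedge^2\g(-1)$.

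For (i) there is a genuine obstruction, though it stems from a misprint in the statement rather than from your strategy. With $(e_i)\subset\g(1)$ and $(e'_i)\subset\g(-1)$ as printed, one has $[e_i,y]\in\g(2)$ and $[e'_i,z]\in\g(0)$, so the left side lies in $\g(2)\wedge\g(0)$; but $\textsl{d}_1([y,z])\in\bigwedge^2\g(1)$, as the paper records just before the proposition. These sit in distinct direct summands of $\bigwedge^2\g$, so the identity cannot hold as written, and your step ``the two Jacobi branches merge into $-\sum_i[e_i,y]\wedge[e'_i,z]$'' necessarily fails: the Jacobi expansion of $\tfrac12\sum_i e_i\wedge[e'_i,[y,z]]$ stays inside $\bigwedge^2\g(1)$, and $\g(0)$-invariance of the canonical element of $\g(1)\otimes\g(-1)$ cannot move you to $\g(2)\wedge\g(0)$. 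The intended hypothesis---visible from the application in Proposition~\ref{prop:wedge-2}, where the cross term in $\eus C_\ap(0)(y\wedge z)$ arises from dual bases of $\g_\ap(0)$---is that $(e_i),(e'_i)$ are dual bases of $\g(0)$. With that correction your overall plan works, provided you replace the $\g(0)$-invariance step by the basis-swap identity
\[
\sum_{e_i\in\g(0)}[e_i,y]\otimes e'_i \;=\; -\!\sum_{f_j\in\g(1)} f_j\otimes [f'_j,y]\qquad(y\in\g(1)),
\]
which follows from the ad-invariance of $\Phi$ on all of $\g$; applying it to $\sum_i[e_i,y]\wedge[e'_i,z]$ and to its $(y\leftrightarrow z)$ partner, then using Jacobi, gives $-\textsl{d}_1([y,z])$ with the factor $\tfrac12$ absorbed just as you anticipated.
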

\begin{proof}
The argument is essentially the same as in the proof of the similar results for $\BZ_m$-gradings, 
see Proposition 4.1 and Eq.~(4.4) in~\cite{jlms01}. 
\end{proof}

The above assertion holds for any  $\BZ$-grading. Below, we again assume that $\g(i)=\g_\ap(i)$ for
some $\ap\in\Pi$ and consider related eigenvalues of $\eus C_\ap(0)$.  

\begin{prop}   \label{prop:wedge-2}
Any  $\eus C_\ap(0)$-eigenvalue in $\bigwedge^2\!\g_\ap(1)$ is at most $2\gamma_\ap(1)$.
The eigenspace for $2\gamma_\ap(1)$ is spanned by the bi-vectors $y\wedge z$ such that $[y,z]=0$.
\end{prop}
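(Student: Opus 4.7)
My plan combines three ingredients: the tensor-product formula for the Casimir, the $\g_\ap(0)$-equivariant bracket map $\partial\colon\bigwedge^2\!\g_\ap(1)\to\g_\ap(2)$, $y\wedge z\mapsto[y,z]$, and the strict inequality $\gamma_\ap(2)<2\gamma_\ap(1)$ from Proposition~\ref{prop:2gamma>1gamma}. Since $\g_\ap(1)$ is a simple $\g_\ap(0)$-module on which $\eus C_\ap(0)$ acts as $\gamma_\ap(1)\cdot\mathrm{id}$, the standard Casimir computation on the tensor square yields
$$
\eus C_\ap(0)(y\wedge z)\,=\,2\gamma_\ap(1)(y\wedge z)\,+\,2\,T(y\wedge z),\qquad T(y\wedge z):=\sum_{j}[f_j,y]\wedge[f'_j,z],
$$
where $\{f_j\},\{f'_j\}$ are $\Phi$-dual bases of $\g_\ap(0)$. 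The proposition thus reduces to showing that $T$ is non-positive on $\bigwedge^2\!\g_\ap(1)$ and that its zero eigenspace equals the span of the commuting bivectors.

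By the generation property $\g_\ap(2)=[\g_\ap(1),\g_\ap(1)]$ in any $(\BZ,\ap)$-grading (the case $d_\ap=1$ is trivial: then $\g_\ap(1)$ is abelian, $\partial=0$, and $\bigwedge^2\!\g_\ap(1)$ is already spanned by commuting bivectors), $\partial$ is a surjection of $\g_\ap(0)$-modules onto the simple module $\g_\ap(2)$. Complete reducibility yields $\bigwedge^2\!\g_\ap(1)=\ker\partial\oplus M$ with $\partial|_M\colon M\isom\g_\ap(2)$, so $\eus C_\ap(0)$ acts on $M$ as $\gamma_\ap(2)$. Proposition~\ref{prop:2gamma>1gamma} provides the strict inequality $\gamma_\ap(2)<2\gamma_\ap(1)$, which handles the complement of $\ker\partial$.

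The heart of the proof is to show that $T$ vanishes on $\ker\partial$, equivalently that $\eus C_\ap(0)$ acts on $\ker\partial$ as the scalar $2\gamma_\ap(1)$. I would derive this by projecting the Koszul identity $\vartheta(\eus C)=2(\textsl{d}\partial+\partial\textsl{d})$ onto $\bigwedge^2\!\g_\ap(1)$ and comparing with the tensor-product expression for $\vartheta(\eus C)(y\wedge z)$. Using the graded decomposition $\eus C=\eus C_\ap(0)+\sum_{i>0,\ell}(e'_{i,\ell}e_{i,\ell}+e_{i,\ell}e'_{i,\ell})$ together with $\eus C\cdot y=y$ for $y\in\g$, the cross-term contributions from $|i|\ge 1$ amount to $2(1-\gamma_\ap(1))(y\wedge z)$, and combining with Proposition~\ref{prop:d_1} yields an identity expressing $T$ as a scalar multiple of $\textsl{d}_1\circ\partial$. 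This vanishes on $\ker\partial$ by definition.

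The spanning characterisation of the $2\gamma_\ap(1)$-eigenspace follows weight-by-weight: weight spaces of $\bigwedge^2\!\g_\ap(1)$ attached to weights $\lambda\notin\Delta_\ap(2)$ consist of root-vector bivectors $e_\mu\wedge e_\nu$ with $\mu+\nu=\lambda\notin\Delta$, so $[e_\mu,e_\nu]=0$ automatically; for $\lambda\in\Delta_\ap(2)$ the codimension-one kernel of $\partial$ in the $\lambda$-weight space is spanned by explicit commuting deformations of decomposable bivectors. The main obstacle will be the identification of $T$ with a multiple of $\textsl{d}_1\circ\partial$: the bookkeeping of how cross terms from $\vartheta(e'_{i,\ell})\vartheta(e_{i,\ell})$ with $|i|\ge 1$ project onto $\bigwedge^2\!\g_\ap(1)$ is delicate but ultimately formal.
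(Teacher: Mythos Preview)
Your strategy for the eigenvalue bound is essentially the paper's: both hinge on the identity
\[
\eus C_\ap(0)(y\wedge z)\;=\;2\gamma_\ap(1)\,(y\wedge z)\;-\;2\,\textsl{d}_1\bigl([y,z]\bigr),
\]
i.e.\ your $T=-\textsl{d}_1\circ\partial$, combined with $\gamma_\ap(2)<2\gamma_\ap(1)$. The paper gets this identity in one stroke from Proposition~\ref{prop:d_1}; your route via the full Koszul formula $\vartheta(\eus C)=2(\textsl{d}\partial+\partial\textsl{d})$ and subtraction of the $|i|\ge 1$ contributions is a legitimate but longer way to the same place. Your $\ker\partial\oplus M$ decomposition (with $M\simeq\g_\ap(2)$, so $\eus C_\ap(0)|_M=\gamma_\ap(2)$ by Schur) is a clean alternative to the paper's appeal to the fact that the maximal eigenvalue is attained on decomposables; in fact it yields slightly more, anticipating Theorem~\ref{thm:wedge-2}.

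Where your proposal has a genuine gap is the \emph{spanning} assertion. You correctly identify the $2\gamma_\ap(1)$-eigenspace with $\ker\partial$, but then you must show that $\ker\partial$ is spanned by decomposable bivectors $y\wedge z$ with $[y,z]=0$. Your weight-by-weight sketch is fine for weights $\lambda\notin\Delta_\ap(2)$, where every root-vector bivector $e_\mu\wedge e_\nu$ of that weight already commutes. For $\lambda\in\Delta_\ap(2)$, however, the $\lambda$-weight space has basis $\{e_{\mu_k}\wedge e_{\nu_k}\}$ with $[e_{\mu_k},e_{\nu_k}]=c_k e_\lambda$, $c_k\ne 0$, and the kernel of $\partial$ there is the hyperplane $\{\sum a_k c_k=0\}$. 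It is not at all obvious that such combinations can be rewritten as sums of decomposable commuting bivectors: the ``explicit commuting deformations'' you allude to would have to mix weights, and cross-brackets $[e_{\mu_i},e_{\nu_j}]$ for $i\ne j$ need not vanish. This is exactly the non-trivial content of $\eus A_2=\ker\partial$ (Theorem~\ref{thm:wedge-2}(i)), which the paper deduces \emph{from} the present proposition, not the other way around.

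The paper closes this gap by citing Kostant's result \cite[Prop.\,4]{ko65} and \cite[Theorem\,3.2]{jlms01}: the eigenspace for the \emph{maximal} Casimir eigenvalue in an exterior power is spanned by decomposable polyvectors. The proof of that fact uses a compact real form and Hermitian positivity and is not ``formal''. Once one has it, any decomposable $y\wedge z$ lying in the $2\gamma_\ap(1)$-eigenspace must satisfy $[y,z]=0$ (otherwise the identity forces its eigenvalue to be $\gamma_\ap(2)$), and the spanning claim follows. You should either invoke this result directly or supply an independent argument for the $\lambda\in\Delta_\ap(2)$ case.
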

\begin{proof}
By Proposition~\ref{prop:d_1}, we have
\[
   \eus C_\ap(0)(y\wedge z)=(\eus C_\ap(0)y)\wedge z+y\wedge (\eus C_\ap(0)z)+
   2\sum_{i=1}^s [e_i,y]\wedge [e'_i,z]=2\gamma_\ap(1){\cdot}y\wedge z-2\textsl{d}_1([y,z]) .
\]
By~\cite[Prop.\,4]{ko65} and \cite[Theorem\,3.2]{jlms01}, the maximal eigenvalue of $\eus C_\ap(0)$ 
in $\bigwedge^k\!\g_\ap(1)$  is attained on decomposable
polyvectors. So, we may assume that $y\wedge z$ is a $\eus C_\ap(0)$-eigenvector.

\textbullet \quad Assume that $[y,z]\ne 0$. Since $[y,z]=\partial (y\wedge z)$ and $\partial$ is $\g$-equivariant, the $\eus C_\ap(0)$-eigenvalues of $y\wedge z$ and $[y,z]$ are equal. As
$[y,z]\in\g_\ap(2)$, its eigenvalue equals $\gamma_\ap(2)$. We also know that $\gamma_\ap(2) <2\gamma_\ap(1)$, see Proposition~\ref{prop:2gamma>1gamma}.

\textbullet \quad If $[y,z]= 0$, then $\eus C_\ap(0)(y\wedge z)=2\gamma_\ap(1){\cdot}y\wedge z$.
\end{proof}

Actually, there is a more precise result for $\bigwedge^2\!\g_\ap(1)$.
\begin{thm}   \label{thm:wedge-2}
Let $\eus A_2=\text{\rm span}\{ y\wedge z\in \bigwedge^2\!\g_\ap(1)\mid [y,z]=0\}$. Then
\begin{itemize}
\item[\sf (i)] \  $\eus A_2=\Ker(\partial\vert_{\bigwedge^2\!\g_\ap(1)})$;
\item[\sf (ii)] \  $\bigwedge^2\!\g_\ap(1)=\eus A_2\oplus \textsl{d}_1(\g_\ap(2))$; hence
  $\eus C_\ap(0)$ has at most two eigenvalues in $\bigwedge^2\!\g_\ap(1)$.
\end{itemize}
\end{thm}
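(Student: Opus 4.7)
The plan is to build everything on a single master identity coming from Proposition~\ref{prop:d_1}(i). The Leibniz-rule expansion of $\eus C_\ap(0)$ on a decomposable bi-vector (already worked out in the proof of Proposition~\ref{prop:wedge-2}) gives
\[
\eus C_\ap(0)(y\wedge z)=2\gamma_\ap(1)\,y\wedge z-2\,\textsl{d}_1([y,z])
\]
for all $y,z\in\g_\ap(1)$. Since $\partial(y\wedge z)=[y,z]$, linearity upgrades this to the master identity
\[
\eus C_\ap(0)\xi=2\gamma_\ap(1)\,\xi-2\,\textsl{d}_1(\partial\xi)\qquad\bigl(\xi\in\textstyle\bigwedge^2\g_\ap(1)\bigr). \qquad(\ast)
\]

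For part~{\sf (i)}, the inclusion $\eus A_2\subseteq \Ker(\partial\vert_{\bigwedge^2\g_\ap(1)})$ is immediate from $\partial(y\wedge z)=[y,z]$. Conversely, $(\ast)$ shows that every element of $\Ker(\partial\vert_{\bigwedge^2\g_\ap(1)})$ is an eigenvector of $\eus C_\ap(0)$ for the eigenvalue $2\gamma_\ap(1)$, hence lies in the maximal eigenspace, which by Proposition~\ref{prop:wedge-2} equals $\eus A_2$. Chaining the inclusions yields $\eus A_2=\Ker(\partial\vert_{\bigwedge^2\g_\ap(1)})$.

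For part~{\sf (ii)}, I apply $\partial$ to $(\ast)$. Since $\partial$ is $\g_\ap(0)$-equivariant and $\partial\xi\in\g_\ap(2)$ carries the $\eus C_\ap(0)$-eigenvalue $\gamma_\ap(2)$, one obtains $\gamma_\ap(2)\,\partial\xi=2\gamma_\ap(1)\,\partial\xi-2\,\partial\textsl{d}_1(\partial\xi)$. Surjectivity of $\partial\vert_{\bigwedge^2\g_\ap(1)}$ onto $\g_\ap(2)=[\g_\ap(1),\g_\ap(1)]$ then forces
\[
\partial\textsl{d}_1=\tfrac{2\gamma_\ap(1)-\gamma_\ap(2)}{2}\,\mathrm{id}_{\g_\ap(2)},
\]
and the scalar is nonzero by Proposition~\ref{prop:2gamma>1gamma}. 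Hence $\textsl{d}_1\vert_{\g_\ap(2)}$ is injective and $\textsl{d}_1(\g_\ap(2))\cap \Ker(\partial\vert_{\bigwedge^2\g_\ap(1)})=0$. Combined with the rank--nullity equality $\dim\bigwedge^2\g_\ap(1)=\dim\eus A_2+\dim\g_\ap(2)$ from~{\sf (i)}, this yields the direct sum of~{\sf (ii)}. The two summands are eigenspaces for the distinct eigenvalues $2\gamma_\ap(1)$ and $\gamma_\ap(2)$, so $\eus C_\ap(0)$ has at most two eigenvalues on $\bigwedge^2\g_\ap(1)$.

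No step is genuinely difficult once $(\ast)$ is in place; the whole argument is clean bookkeeping riding on Proposition~\ref{prop:d_1}(i) together with the strict inequality $2\gamma_\ap(1)>\gamma_\ap(2)$ from Proposition~\ref{prop:2gamma>1gamma}. The degenerate case $d_\ap=1$ (so $\g_\ap(2)=0$ and $\g_\ap(1)$ is abelian) is trivially absorbed by $(\ast)$.
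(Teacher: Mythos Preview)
Your proof is correct. Part~{\sf (i)} is identical to the paper's argument. In part~{\sf (ii)} you take a slightly different route: the paper appeals to Proposition~\ref{prop:d_1}(ii) (the adjointness $\Phi^{(2)}(\textsl{d}_1 x,u\wedge v)=-\Phi(x,\partial(u\wedge v))$) to conclude that $\dim\textsl{d}_1(\g_\ap(2))=\dim\partial(\bigwedge^2\g_\ap(-1))=\dim\g_\ap(2)$, and then finishes by a complement argument with an auxiliary summand $\mathbb U$. You instead apply $\partial$ to the master identity $(\ast)$ and read off $\partial\textsl{d}_1=\tfrac{2\gamma_\ap(1)-\gamma_\ap(2)}{2}\,\mathrm{id}_{\g_\ap(2)}$, which gives injectivity of $\textsl{d}_1$ and disjointness from $\eus A_2$ in one stroke, then close with rank--nullity. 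Your approach is cleaner and more self-contained---it avoids Proposition~\ref{prop:d_1}(ii) altogether and, as a bonus, yields the explicit eigenvalue of the ``Laplacian'' $\partial\textsl{d}_1$ on $\g_\ap(2)$. The paper's route has the minor advantage of not needing the surjectivity $[\g_\ap(1),\g_\ap(1)]=\g_\ap(2)$ explicitly (it comes out in the wash), but that fact is standard and stated elsewhere in the paper.
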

\begin{proof}
{\sf (i)} \ If $u=\sum_i y_i\wedge z_i$, then 
\[
   \eus C_\ap(0)u=2\gamma_\ap(1){\cdot}u-
2\sum_i\textsl{d}_1([y_i,z_i])=2\gamma_\ap(1){\cdot}u-2\textsl{d}_1\partial(u).
\]
Therefore, if $\partial(u)=0$, then $u\in \eus A_2$ in view of Proposition~\ref{prop:wedge-2}. Hence
$\Ker(\partial\vert_{\bigwedge^2\!\g_\ap(1)})\subset \eus A_2$, and the opposite inclusion is obvious.

{\sf (ii)} \ Since $\textsl{d}_1: \g_\ap(2)\to \bigwedge^2\!\g_\ap(1)$ is $\g_\ap(0)$-equivariant,
the eignevalue of $\eus C_\ap(0)$ in $\textsl{d}_1(\g_\ap(2))$, which is $\gamma_\ap(2)$, is strictly less than
$2\gamma_\ap(1)$. Therefore, we have $\bigwedge^2\!\g_\ap(1)=\eus A_2\oplus \textsl{d}_1(\g_\ap(2))\oplus\mathbb U$
with some $\mathbb U$.
Then it follows from {\sf (i)} that
$\partial(\bigwedge^2\!\g_\ap(1))=\partial(\textsl{d}_1(\g_\ap(2))\oplus\partial\,\mathbb U$ 
and  $\dim \partial(\textsl{d}_1(\g_\ap(2))=\dim (\textsl{d}_1(\g_\ap(2))$. Using Proposition~\ref{prop:d_1}(ii),
we see that $\dim (\textsl{d}_1(\g_\ap(2))=\dim \partial(\bigwedge^2\!\g_\ap(-1))=
\dim (\bigwedge^2\!\g_\ap(1))$. Hence $\partial\,\mathbb U=0$ and then $\mathbb U=0$.
\end{proof}

\begin{rmk}   \label{rem:pro-A2}
It follows from Theorem~\ref{thm:wedge-2} that 
\[   \textstyle
\bigwedge^2\!\g_\ap(1)\simeq \g_\ap(2) \Longleftrightarrow\ \eus A_2=0\ \Longleftrightarrow 
\g_\ap(1) \text{ has no 2-dim abelian subspaces}.
\] 
This possibility does materialise for the $(\BZ,\ap)$-gradings $(\GR{B}{n}, \ap_n)$ and $(\GR{G}{2},\ap_1)$.
\end{rmk}

The following is a natural generalisation of Proposition~\ref{prop:wedge-2}. 
\begin{thm}     \label{thm:main3}
For any $k\ge 1$, the maximal eigenvalue of  $\eus C_\ap(0)$ in $\bigwedge^k\!\g_\ap(1)$ is at most $k\gamma_\ap(1)$.
This bound is attained if and only if $\g_\ap(1)$ contains a $k$-dimensional commutative subalgebra.
In that case, the eigenspace belonging to $k\gamma_\ap(1)$ is spanned by the polyvectors 
$\bigwedge^k\!\ah$, where $\ah\subset\g_\ap(1)$ is $k$-dimensional and $[\ah,\ah]=0$.
\end{thm}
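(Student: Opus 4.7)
The plan is to generalise the argument of Proposition~\ref{prop:wedge-2} ($k=2$) to arbitrary $k$ by combining three ingredients: the reduction to decomposable polyvectors, a $k$-factor analogue of the Casimir identity, and a dichotomy driven by the boundary operator $\partial$.

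First, I would invoke \cite[Prop.\,4]{ko65} and \cite[Theorem\,3.2]{jlms01} to reduce to $u=y_1\wedge\cdots\wedge y_k$ with linearly independent $y_i\in\g_\ap(1)$: the maximal eigenvalue of $\eus C_\ap(0)$ on $\bigwedge^k\g_\ap(1)$ is always attained on such decomposable polyvectors. Next, iterating the Leibniz-type computation in the proof of Proposition~\ref{prop:wedge-2} and substituting Proposition~\ref{prop:d_1}(i) on each pair of factors would yield the identity
\begin{equation}\label{eq:k-wedge-id}
\eus C_\ap(0)\,u=k\gamma_\ap(1)\,u-2\sum_{1\le i<j\le k}(-1)^{i+j-1}\textsl{d}_1([y_i,y_j])\wedge y_1\wedge\cdots\widehat{y_i}\cdots\widehat{y_j}\cdots\wedge y_k.
\end{equation}
The right-hand side can be written compactly as $k\gamma_\ap(1)u-2\,\textsl{d}_1(\partial u)$ once $\textsl{d}_1$ is extended from $\g_\ap(2)$ to $\g_\ap(2)\wedge\bigwedge^{k-2}\g_\ap(1)$ by the Leibniz rule on the outer factors; the $k=2$ instance of~\eqref{eq:k-wedge-id} is precisely the computation in the proof of Proposition~\ref{prop:wedge-2}.

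Given a decomposable eigenvector $u$ of $\eus C_\ap(0)$ with eigenvalue $\lambda$, the analysis then splits into two cases. If $\partial u=0$, then \eqref{eq:k-wedge-id} immediately gives $\lambda=k\gamma_\ap(1)$; expanding $\partial u=\sum_{i<j}(-1)^{i+j-1}[y_i,y_j]\wedge y_1\wedge\cdots\widehat{y_i}\cdots\widehat{y_j}\cdots\wedge y_k$ in a basis of $\g_\ap(2)$ and invoking the linear independence of the $(k-2)$-wedges $\{y_1\wedge\cdots\widehat{y_i}\cdots\widehat{y_j}\cdots\wedge y_k\}_{i<j}$ inside $\bigwedge^{k-2}\mathrm{span}(y_1,\dots,y_k)$ forces $[y_i,y_j]=0$ for all $i<j$, so $\ah:=\mathrm{span}(y_1,\dots,y_k)$ is a $k$-dimensional abelian subspace; conversely, any $k$-dimensional abelian $\ah$ satisfies $\partial(\bigwedge^k\ah)=0$, placing $\bigwedge^k\ah$ in the $k\gamma_\ap(1)$-eigenspace via~\eqref{eq:k-wedge-id}. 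In the complementary case $\partial u\ne 0$, the $\g_\ap(0)$-equivariance of $\partial$ identifies $\lambda$ with an eigenvalue of $\eus C_\ap(0)$ on $\Ima\partial\subset\g_\ap(2)\wedge\bigwedge^{k-2}\g_\ap(1)$; the strict bound $\lambda<k\gamma_\ap(1)$ should then follow by induction on $k$, with base case Proposition~\ref{prop:2gamma>1gamma} ($\gamma_\ap(2)<2\gamma_\ap(1)$) and inductive hypothesis $\delta_\ap(k-2)\le(k-2)\gamma_\ap(1)$, invoking the tensor-product Casimir-eigenvalue expansion $(\mu_V+\mu_W,\mu_V+\mu_W+2\rho_\ap(0))=\gamma_V+\gamma_W+2(\mu_V,\mu_W)$ on highest weights of the irreducible summands of $\g_\ap(2)\wedge\bigwedge^{k-2}\g_\ap(1)$.

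The hardest step will be this final induction: controlling the mixed term $2(\mu_V,\mu_W)$ uniformly across all $(\BZ,\ap)$-gradings requires careful use of the geometry of $\mathcal R_\ap$ from Section~\ref{sect:some-prop}. If a clean uniform argument proves elusive, a fallback is to verify the strict inequality case-by-case via the data compiled in Appendix~\ref{sect:App}.
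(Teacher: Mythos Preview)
Your approach diverges from the paper's in a substantive way. The paper does not carry out an induction on $k$; it simply observes that Kostant's argument from~\cite{ko65} (adapted to the graded setting in~\cite[Theorem\,4.4]{jlms01}) goes through verbatim. That argument passes to a compact real form of $\g$ and the associated Hermitian inner product on $\bigwedge^\bullet\g$: in that setting the ``defect'' operator $u\mapsto k\gamma_\ap(1)\,u-\eus C_\ap(0)u$ is shown to be positive semidefinite directly, with kernel exactly the span of the abelian polyvectors. Positivity replaces any need to track eigenvalues on $\g_\ap(2)\wedge\bigwedge^{k-2}\g_\ap(1)$.

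Your identity~\eqref{eq:k-wedge-id} and the deduction that $\partial u=0$ on a decomposable $u$ forces $[y_i,y_j]=0$ are fine, and match the $k=2$ analysis. The genuine gap is the case $\partial u\ne 0$. You propose to bound the $\eus C_\ap(0)$-eigenvalue of $\partial u$ by the maximal eigenvalue on $\g_\ap(2)\otimes\bigwedge^{k-2}\g_\ap(1)$ via the Cartan-component formula $\gamma_\ap(2)+\delta_\ap(k-2)+2(\mu_V,\mu_W)$, hoping the first two terms together with Proposition~\ref{prop:2gamma>1gamma} and the inductive bound will dominate. But the cross term $2(\mu_V,\mu_W)$ is typically \emph{positive} here: $\mu_V$ is the highest root in $\Delta_\ap(2)$ and $\mu_W$ is a sum of roots in $\Delta_\ap(1)$, and there is no structural reason for this pairing to be controlled by the slack $2\gamma_\ap(1)-\gamma_\ap(2)$. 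You acknowledge this yourself, and your fallback to case-by-case verification via Appendix~\ref{sect:App} would not constitute a proof (Appendix~\ref{sect:App} tabulates $\gamma_\ap(i)$ and $q_\ap(i)$, not the highest weights $\mu_W$ of the summands of $\bigwedge^{k-2}\g_\ap(1)$ for all $k$). This is precisely the difficulty that the Hermitian-form argument circumvents: with a positive-definite inner product one never needs to estimate the cross term, because the relevant operator is manifestly $\ge 0$.

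A secondary point: even if the bound were established, your argument for the eigenspace description is one-directional. You show that decomposable eigenvectors with eigenvalue $k\gamma_\ap(1)$ come from abelian $\ah$, but not that every (possibly non-decomposable) vector in the $k\gamma_\ap(1)$-eigenspace lies in the span of such $\bigwedge^k\ah$. In the Kostant--Hermitian approach this falls out of the kernel description of the semidefinite operator.
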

\begin{proof}
The  proof of the similar result related to $\BZ_m$-gradings goes through {\sl mutatis mutandis\/}
(cf. \cite[Theorem\,4.4]{jlms01}). Following ideas of Kostant~\cite{ko65}, one has to use a compact real 
form of $\g$ and a related Hermitian inner product on $\g$.
\end{proof}

Let $\eus A_k$ be the subspace of $\bigwedge^k\!\g_\ap(1)$ spanned by the ``$k$-dimensional 
commutative subalgebras'', i.e.,
\[
    \eus A_k=\text{\rm span}\{ y_1\wedge\ldots\wedge y_k\mid y_i\in\g_\ap(1) \ \& \ [y_i,y_j]=0
    \ \forall i,j\} .
\]
Then $\eus A=\oplus_{k\ge 1}\eus A_k$ is a $\g_\ap(0)$-submodule of $\bigwedge^\bullet\!\g_\ap(1)$.
\begin{prop}       \label{prop:mult-free}
$\eus A$ is a multiplicity-free\/ $\g_\ap(0)$-module.
\end{prop}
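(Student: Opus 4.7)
The plan is to combine Theorem~\ref{thm:main3} with a combinatorial analysis of the $\be_\ap(0)$-singular vectors of $\eus A$. By Theorem~\ref{thm:main3}, $\eus A_k$ coincides with the eigenspace of $\eus C_\ap(0)$ on $\bigwedge^k\g_\ap(1)$ belonging to the maximal eigenvalue $k\gamma_\ap(1)$; in particular $\eus A_k$ is automatically a $\g_\ap(0)$-submodule, and every irreducible constituent of $\eus A_k$ shares the same value of the Casimir.

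Next I would classify the $\be_\ap(0)$-singular vectors in $\eus A$. Since the weight spaces of $\g_\ap(1)$ are the one-dimensional root spaces $\g^\gamma$ with $\gamma\in\Delta_\ap(1)$, every $\te$-weight vector of $\bigwedge^k\g_\ap(1)$ is a scalar multiple of a decomposable polyvector $v_S:=\bigwedge_{\gamma\in S}e_\gamma$ for some $k$-element subset $S\subset\Delta_\ap(1)$. A direct computation of $e_\beta{\cdot}v_S$ for $\beta\in\Pi_\h$ shows that $v_S$ is $\be_\ap(0)$-singular exactly when $\ah_S:=\bigoplus_{\gamma\in S}\g^\gamma$ is $\be_\ap(0)$-stable, i.e., $S$ is an upper set in $\Delta_\ap(1)$ for the order $\gamma\preccurlyeq\delta\Leftrightarrow\delta-\gamma\in\BZ_{\ge 0}\Pi_\h$; by Theorem~\ref{thm:main3}, $v_S$ lies in $\eus A_k$ precisely when, moreover, $\ah_S$ is abelian, i.e., $\gamma+\delta\notin\Delta$ for all $\gamma,\delta\in S$. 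The irreducible $\g_\ap(0)$-submodule generated by such $v_S$ has highest weight $\lambda_S:=\sum_{\gamma\in S}\gamma$, so the multiplicity of a simple $\g_\ap(0)$-module $\eus V_\lambda$ in $\eus A$ equals the number of abelian $\be_\ap(0)$-stable subsets $S\subset\Delta_\ap(1)$ with $\lambda_S=\lambda$.

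Multiplicity-freeness therefore reduces to the injectivity of the assignment $S\mapsto\lambda_S$ on abelian $\be_\ap(0)$-stable subsets of $\Delta_\ap(1)$, and this is the main obstacle. I would handle it by adapting the argument of \cite[Sect.\,4]{jlms01}: assuming $\lambda_S=\lambda_T$ for two distinct such $S,T$ (of necessarily equal cardinality), set $S'=S\setminus T$ and $T'=T\setminus S$ so that $\sum_{\gamma\in S'}\gamma=\sum_{\delta\in T'}\delta$, pick a $\preccurlyeq$-minimal element $\gamma_0\in S'$, and exploit the upper-set and abelian properties of both $S$ and $T$ together with the above sum identity to force $\gamma_0\in T$, contradicting $\gamma_0\in S'$. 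Because the argument in \cite{jlms01} is carried out for commutative subspaces of a simple $\g_0$-module in a $\BZ_m$-graded algebra and uses only the action of the simple roots of $\g_\ap(0)$ on $\Delta_\ap(1)$, it transfers mechanically to the present $\BZ$-graded setting, and no new root-system input is required.
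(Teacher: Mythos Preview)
Your approach matches the paper's: reduce to showing that the map $S\mapsto\lambda_S=\sum_{\gamma\in S}\gamma$ is injective on $\be_\ap(0)$-stable abelian subsets of $\Delta_\ap(1)$. Two points are worth correcting.

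First, your claim that every $\te$-weight vector of $\bigwedge^k\g_\ap(1)$ is a scalar multiple of a decomposable $v_S$ is false: distinct subsets $S$ can have the same weight sum, so weight spaces are typically higher-dimensional. What is true (and what both you and the paper actually need) is the weaker statement that every highest weight occurring in $\eus A_k$ equals $\lambda_S$ for some $\be_\ap(0)$-stable abelian $S$; this follows because each $\be_\ap(0)$-stable abelian $\ah$ gives a genuine singular vector $v_{\ah}$, and a Borel fixed-point argument on the projective variety of abelian $k$-planes shows these exhaust the highest weights in $\eus A_k$.

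Second, for the injectivity step the paper's argument is shorter and more direct than the ``minimal element'' strategy you outline, and---notably---it does not use the abelian hypothesis at all. Given two upper ideals $S\ne T$ in $\Delta_\ap(1)$ with $\lambda_S=\lambda_T$, set $S'=S\setminus T$, $T'=T\setminus S$. Then $|S'|=|T'|$ is a nonzero vector, so $(|S'|,|T'|)>0$, whence some $\mu\in S'$, $\nu\in T'$ satisfy $(\mu,\nu)>0$. Since $\hot_\ap(\mu)=\hot_\ap(\nu)=1$, the difference $\mu-\nu$ lies in $\Delta_\ap(0)$; if, say, $\nu-\mu\in\Delta^+_\ap(0)$, then the upper-ideal property of $S$ forces $\nu=\mu+(\nu-\mu)\in S$, contradicting $\nu\in T'$. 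So the injectivity holds already for upper ideals, without invoking commutativity.
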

\begin{proof}
Set $\be_\ap(0)=\be\cap\g_\ap(0)$. If $\lb$ is a highest weight in $\eus A$ w.r.t. $\be_\ap(0)$, then 
there is a $\be_\ap(0)$-stable abelian subspace $\ah\subset \g_\ap(1)$ such that the set of $\te$-roots 
of $\ah$ is $\Delta_\ah=\{\mu_1,\dots,\mu_l\}$ and $\lb=\sum_{i=1}^l \mu_i$; and vice versa. The
$\be_\ap(0)$-invariance of $\ah$ means that $\Delta_\ah$ is an {\it upper $\Delta^+_\ap(0)$-ideal\/} in 
the sense that if $\mu_i+\eta\in\Delta^+$ for some $\eta\in\Delta^+_\ap(0)$, then 
$\mu_i+\eta\in\Delta_\ah$.
\\ \indent
Assume that there are two $\be_\ap(0)$-stable commutative subalgebras of $\g_\ap(1)$
whose dominant weights coincide. That is, $\ah\sim \Delta_\ah=\{\mu_1,\dots,\mu_l\}$,
$\ah'\sim \Delta_{\ah'}=\{\nu_1,\dots,\nu_m\}$, and $\sum \mu_i=\sum \nu_j$. Removing the common 
elements of these two sets, we have
\[
    |\Delta_\ah\setminus \Delta_{\ah'}|= |\Delta_{\ah'}\setminus \Delta_{\ah}|
\]
Hence $(|\Delta_\ah\setminus \Delta_{\ah'}|, |\Delta_{\ah'}\setminus \Delta_{\ah}|)>0$ and there are
$\mu_i\in\Delta_\ah\setminus \Delta_{\ah'} , \nu_j\in \Delta_{\ah'}\setminus \Delta_{\ah}$ such that
$(\mu_i,\nu_j)>0$. Then $\mu_i-\nu_j\in \Delta_\ap(0)$, since $\hot_\ap(\mu_i)=\hot_\ap(\nu_j)$.
If, for instance, $\nu_j-\mu_i$ is positive, then $\nu_j\in \Delta_{\ah}$. A contradiction!
\end{proof}

If $d_\ap=1$, then $\g_\ap(1)$ is commutative. Conversely, if $d_\ap\ge 2$, then $[\g_\ap(1),\g_\ap(1)]=\g_\ap(2)$,
i.e., $\g_\ap(1)$ is {\bf not} commutative. From our theory, we derive a more precise assertion.

\begin{thm}    \label{thm:comm-dim}
For any $(\BZ{,}\ap)$-grading of $\g$, one has
\begin{itemize}
\item either\/  $\g_\ap(1)$ is abelian (which happens if and only if $d_\ap=1$);
\item or\/ $\dim\ah\le \frac{1}{2}\dim\g_\ap(1)$ for any abelian subspace $\ah\subset\g_\ap(1)$.
\end{itemize}
\end{thm}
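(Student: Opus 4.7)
The "$d_\ap=1$" half is immediate: $d_\ap=1$ means $\g_\ap(2)=0$, which forces $[\g_\ap(1),\g_\ap(1)]=0$. Conversely, if $d_\ap\ge 2$ then $\g_\ap({\ge}1)$ is generated as a subalgebra by $\g_\ap(1)$ (Kostant, recalled in Section~\ref{subs:Z-grad-versus}), so $[\g_\ap(1),\g_\ap(1)]=\g_\ap(2)\ne 0$ and $\g_\ap(1)$ is not abelian.

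I now assume $d_\ap\ge 2$, write $n=\dim\g_\ap(1)$, and suppose towards contradiction that an abelian $\ah\subset\g_\ap(1)$ of dimension $k>n/2$ exists. The plan is to play Theorem~\ref{thm:main3} off against itself at the three values $k$, $n-k$, and $n$, using the $\g_\ap(0)$-isomorphism $\bigwedge^{n-k}\!\g_\ap(1)\cong\bigl(\bigwedge^k\!\g_\ap(1)\bigr)^{\!*}\otimes\det\g_\ap(1)$ to transfer information between the first two. The centre $\ce$ of $\g_\ap(0)$ is spanned by the grading element $h_\ap$; let $\pi\in\ce^*$ be its dual. Then $\pi$ is a positive multiple of $\vp_\ap$, and since $(\vp_\ap,\ap_j)=0$ for $j\ne\ap$, one has $(\pi,\lb_\es)=0=(\pi,\rho_\es)$ for every weight $\lb_\es$ supported on the Cartan of $\es:=[\g_\ap(0),\g_\ap(0)]$. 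Hence a simple submodule of $\bigwedge^j\!\g_\ap(1)$ with highest weight $\lb_\es+j\pi$ (the central part $j\pi$ being forced by the action of $h_\ap$) has $\eus C_\ap(0)$-eigenvalue $(\lb_\es,\lb_\es+2\rho_\es)+j^2(\pi,\pi)$, a clean split of semisimple and central contributions.

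The duality above sends $\lb_\es\mapsto-w_\es\lb_\es$ and $k\pi\mapsto(n-k)\pi$, and $-w_\es$ preserves $(\cdot,\cdot+2\rho_\es)$; so if $M$ denotes the maximum of $(\lb_\es,\lb_\es+2\rho_\es)$ over simple constituents, $M$ is the same for $\bigwedge^k\!\g_\ap(1)$ and $\bigwedge^{n-k}\!\g_\ap(1)$. Theorem~\ref{thm:main3} applied via the existing $\ah$ forces the maximum on $\bigwedge^k\!\g_\ap(1)$ to equal $k\gamma_\ap(1)$, whence $M=k\gamma_\ap(1)-k^2(\pi,\pi)$; Theorem~\ref{thm:main3} applied to $\bigwedge^{n-k}\!\g_\ap(1)$ then bounds $M+(n-k)^2(\pi,\pi)\le(n-k)\gamma_\ap(1)$. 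Using $k^2-(n-k)^2=n(2k-n)$ and dividing by $2k-n>0$ yields $n(\pi,\pi)\ge\gamma_\ap(1)$, i.e., $n^2(\pi,\pi)\ge n\gamma_\ap(1)$. But $\bigwedge^n\!\g_\ap(1)$ is one-dimensional with $\lb_\es=0$, so its $\eus C_\ap(0)$-eigenvalue is exactly $n^2(\pi,\pi)$; Theorem~\ref{thm:main3} at $n$ supplies the reverse inequality $n^2(\pi,\pi)\le n\gamma_\ap(1)$, with equality iff $\g_\ap(1)$ itself is abelian. The two collapse to equality, contradicting $d_\ap\ge 2$.

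The main obstacle I expect is securing the clean split of the $\eus C_\ap(0)$-eigenvalue in the second paragraph, which rests on $(\vp_\ap,\ap_j)=0$ for $j\ne\ap$ (hence $(\pi,\lb_\es)=(\pi,\rho_\es)=0$) and on the involutivity of $-w_\es$ preserving the bilinear form on $\es$-highest weights, so that the bijection $U\mapsto U^*\otimes\det\g_\ap(1)$ between simple constituents of $\bigwedge^k\!\g_\ap(1)$ and $\bigwedge^{n-k}\!\g_\ap(1)$ preserves the $M$-value. Once this is in place, the three invocations of Theorem~\ref{thm:main3} interlock in essentially two lines of algebra.
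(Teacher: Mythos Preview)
Your proof is correct and follows essentially the same approach as the paper: split $\eus C_\ap(0)$ into its semisimple and central pieces, use that $\bigwedge^k\g_\ap(1)$ and $\bigwedge^{m-k}\g_\ap(1)$ have the same maximal semisimple eigenvalue, and combine the information at levels $k$, $m-k$, and $m$ via Theorem~\ref{thm:main3} to force $\delta_\ap(m)=m\gamma_\ap(1)$. The only cosmetic difference is that the paper observes an $(m-k)$-dimensional abelian subspace of $\ah$ also exists, so it gets an \emph{equality} at both $k$ and $m-k$ and reads off $m\chi=\gamma_\ap(1)$ directly, whereas you use only the inequality at $n-k$ and close the gap with the inequality at $n$; the two routes differ by one line of algebra.
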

\begin{proof} It suffices to prove that if $\g_\ap(1)$ contains an abelian subspace $\ah$ such that
$\dim\ah> \frac{1}{2}\dim\g_\ap(1)$, then $\g_\ap(1)$ is abelian.

Set $m=\dim\g_\ap(1)$ and let $\delta_\ap(k)$ be the maximal eigenvalue of $\eus C_\ap(0)$ on 
$\bigwedge^k\!\g_\ap(1)$. Then
$\delta_\ap(1)=\gamma_\ap(1)$ and we have proved that $\delta_\ap(k)\le k\gamma_\ap(1)$ for any $k$.
Note that $\delta_\ap(m)$ is just the eigenvalue on the $1$-dimensional module
$\bigwedge^m\!\g_\ap(1)$.

Write $\g_\ap(0)=\es\oplus \langle h_\ap\rangle$, where $\es$ is semisimple and $h_\ap$ is the element of the centre that has the eigenvalue $k$  on $\g_\ap(k)$. Then $h_\ap$ also has 
eigenvalue $k$  on $\bigwedge^k\!\g_\ap(1)$. We have
\beq     \label{eq:casimir-summa}
  \eus C_\ap(0)=\eus  C_\es+ h_\ap h'_\ap=\eus  C_\es+ h^2_\ap /(h_\ap,h_\ap) .
\eeq
Since $\bigwedge^k\!\g_\ap(1)$ and $\bigwedge^{m-k}\!\g_\ap(1)$ are isomorphic as $\es$-modules,
their $\eus C_\es$-eigenvalues coincide; the difference in $\eus C_\ap(0)$-eigenvalues comes the presence of the last summand.

An easy observation is that if the summand $h^2_\ap /(h_\ap,h_\ap)$ has the eigenvalue $\chi$ on 
$\g_\ap(1)$, then its eigenvalue on $\bigwedge^k\!\g_\ap(1)$ is $k^2\chi$.

Assume that $k< m/2$ and $\g_\ap(1)$ has a commutative subalgebra of dimension 
$m-k$. Then it has a $k$-dimensional commutative subalgebra, too. Hence
$\delta_\ap(k)=k\gamma_\ap(1)$ and $\delta_\ap(m-k)=(m-k)\gamma_\ap(1)$. 
Let $F_i$ be the maximal eigenvalue of $\eus  C_\es$ on $\bigwedge^i\!\g_\ap(1)$.  Then
$F_i=F_{m-i}$ and, using
the decomposition in~\eqref{eq:casimir-summa}, we can write \\
\centerline{
$\begin{cases}   \delta_\ap(k)& =F_k+k^2\chi=k\gamma_\ap(1), \\
\delta_\ap(m-k)& =F_{k}+(m-k)^2\chi=(m-k)\gamma_\ap(1) . \end{cases}$}
\\[.6ex]
Taking the difference yields $m(m-2k)\chi=(m-2k)\gamma_\ap(1)$. Note also that $F_0=F_m=0$, since
$\bigwedge^m\!\g_\ap(1)$ is a trivial $\es$-module. Hence
$\delta_\ap(m)=m^2\chi=m\gamma_\ap(1)$. By Theorem~\ref{thm:main3}, this means that
$\g_\ap(1)$ is commutative.
\end{proof}

\begin{rmk}   \label{rem:pro-alela}
It was recently noticed that, for any nilpotent element $e\in\g$ and the associated Dynkin $\BZ$-grading (so that $e\in\g(2)$),  one has $\dim\ah\le (\dim\g(1))/2$ whenever $\ah\subset\g(1)$ and $[\ah,\ah]=0$, 
see~\cite[Prop.\,3.1]{e-j-k}. In this case, $\dim\g(1)$ is necessarily even.
However, there are $(\BZ{,}\ap)$-gradings of height $\ge 2$ that are not Dynkin gradings, and it can 
also happen that $\dim\g_\ap(1)$ is odd. 
\\ \indent
In fact, conversations with A.G.\,Elashvili on results of~\cite{e-j-k} revived my memory of~\cite{jlms01} 
and triggered my interest to eigenvalues of the Casimir elements related to Levi subalgebras and 
$\BZ$-gradings.
\end{rmk}

\begin{rmk}   \label{rem:ne-vsegda-polovina}
For an arbitrary $\BZ$-grading of $\g$, it can happen that $\g(1)$ is not abelian, but 
$\dim\ah> \frac{1}{2}\dim\g(1)$ for some abelian $\ah\subset\g(1)$. Suppose that a $\BZ$-grading
$\g=\bigoplus_{i\in\BZ}\g(i)$ is given by a function $\ff:\Pi\to\{0,1\}$, i.e., $\g^\ap\subset \g(\ff(\ap))$, cf.
~\cite[Ch.\,3,\,\S3.5]{t41}. Set $\Pi_1=\{\ap\mid \ff(\ap)=1\}$.
Then $\g(1)=\bigoplus_{\ap\in\Pi_1}\eus V(\ap)$, where $\eus V(\ap)$ is a simple $\g(0)$-module with 
{\bf lowest} weight $\ap$. The set of weights of $\eus V(\ap)$ is 
\[
   \{\gamma\in\Delta^+ \mid \hot_\ap(\gamma)=1 \ \& \ \hot_\beta(\gamma)=0 \ \ \forall \beta\in\Pi_1\setminus\{\ap\}\} .
\] 
Take, for instance, $\Pi_1=\{\ap_2,\ap_4\}$ for $\GR{A}{6}$. Then both $\eus V(\ap_2)$ and 
$\eus V(\ap_4)$ are
abelian, of different dimension, but $0\ne [\eus V(\ap_2), \eus V(\ap_4)]=\eus V(\ap_2+\ap_3+\ap_4)\subset \g(2)$. Here
$6=\dim \eus V_{\ap_4}> \frac{1}{2}\dim\g(1)=5$.
\end{rmk}

Below, we elaborate on some numerology related to the numbers $q_\ap(i), \dim\g_\ap(i),
(\vp_\ap,\vp_\ap)$, $\delta_\ap(m)$,
etc. Recall that $m=\dim\g_\ap(1)$.

Let $C=(c_{\ap\beta})_{\ap,\beta\in\Pi}$ be the inverse of the Cartan matrix of $\g$. Then $\vp_\ap=\sum_{\beta\in\Pi} c_{\ap\beta}\beta$. Therefore, $(\vp_\ap,\vp_\ap)=c_{\ap\ap}(\vp_\ap,\ap)=
c_{\ap\ap}(\ap,\ap)/2=c_{\ap\ap}/2h^*r_\ap$.

\begin{prop}    \label{prop:delta-top}
For any $\ap\in\Pi$, we have $\delta_\ap(m)=q_\ap(1)^2 (\vp_\ap,\vp_\ap)$.
\end{prop}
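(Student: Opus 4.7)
The key observation is that since $m=\dim\g_\ap(1)$, the space $\bigwedge^{m}\!\g_\ap(1)$ is one-dimensional, so the ``maximal eigenvalue'' $\delta_\ap(m)$ is just the unique eigenvalue of $\eus C_\ap(0)$ on this one-dimensional $\g_\ap(0)$-module. A one-dimensional module of a reductive Lie algebra is a character whose weight is determined by the $\te$-weight; here that weight is the sum of all $\te$-weights of $\g_\ap(1)$, namely
\[
   \lb \;=\; |\Delta_\ap(1)| \;=\; q_\ap(1)\vp_\ap.
\]
So the plan is to identify $\bigwedge^{m}\!\g_\ap(1)$ as the simple (one-dimensional, hence highest-weight) $\g_\ap(0)$-module with highest weight $q_\ap(1)\vp_\ap$ and then invoke the standard formula for the Casimir eigenvalue.

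More precisely, I would apply the formula used in the proof of Theorem~\ref{thm:s-znach-1}: for a simple $\g_\ap(0)$-module with highest weight $\mu$, the $\eus C_\ap(0)$-eigenvalue is $(\mu,\mu+2\rho_\ap(0))$, where $2\rho_\ap(0)=|\Delta^+_\ap(0)|$. With $\mu=q_\ap(1)\vp_\ap$, this gives
\[
   \delta_\ap(m)\;=\;\bigl(q_\ap(1)\vp_\ap,\, q_\ap(1)\vp_\ap+2\rho_\ap(0)\bigr)
   \;=\;q_\ap(1)^2(\vp_\ap,\vp_\ap)+2q_\ap(1)(\vp_\ap,\rho_\ap(0)).
\]
The cross term vanishes because $(\vp_\ap,\rho_\ap(0))=0$: every root $\mu\in\Delta^+_\ap(0)$ is a $\BZ_{\ge 0}$-combination of simple roots in $\Pi\setminus\{\ap\}$, and $(\vp_\ap,\beta)=0$ for each such $\beta$. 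This yields $\delta_\ap(m)=q_\ap(1)^2(\vp_\ap,\vp_\ap)$, as desired.

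There is really no obstacle here; the only thing to double-check is the identification $|\Delta_\ap(1)|=q_\ap(1)\vp_\ap$, which is Remark~\ref{rem:refinement-q}(1), together with the trivial fact that a one-dimensional $\g_\ap(0)$-module of weight $\mu$ coincides with its own highest-weight module. An alternative route (should one prefer the decomposition of Theorem~\ref{thm:comm-dim}'s proof) is to write $\eus C_\ap(0)=\eus C_\es+h_\ap^2/(h_\ap,h_\ap)$: on $\bigwedge^m\!\g_\ap(1)$ the semisimple part $\es$ acts trivially, while $h_\ap$ acts by $q_\ap(1)\vp_\ap(h_\ap)=q_\ap(1)\cdot 2h^*r_\ap(\vp_\ap,\vp_\ap)$, and dividing by $(h_\ap,h_\ap)=(2h^*r_\ap)^2(\vp_\ap,\vp_\ap)$ gives the same answer. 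I would present the first derivation as it is shorter and matches the notation already established.
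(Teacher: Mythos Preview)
Your proof is correct and follows essentially the same approach as the paper: both identify the weight of $\bigwedge^m\!\g_\ap(1)$ as $q_\ap(1)\vp_\ap$ and apply the Casimir formula $(\lb,\lb+2\rho_\ap(0))$. Your route is in fact slightly more direct---you kill the cross term immediately via $(\vp_\ap,\rho_\ap(0))=0$, whereas the paper rewrites $2\rho_\ap(0)+q_\ap(1)\vp_\ap$ as $2\rho-\sum_{i\ge2}|\Delta_\ap(i)|$ and then uses $(\vp_\ap,2\rho)=q_\ap(\vp_\ap,\vp_\ap)$, which amounts to the same orthogonality.
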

\begin{proof}
The weight of the $1$-dimensional $\g_\ap(0)$-module $\bigwedge^m\!\g_\ap(1)$ is $|\Delta_\ap(1)|=q_\ap(1)\vp_\ap$. Since 
$(\vp_\ap,2\rho)=q_\ap(\vp_\ap,\vp_\ap)$, we obtain
\begin{multline*}
\delta_\ap(m)=(q_\ap(1)\vp_\ap,q_\ap(1)\vp_\ap+2\rho_\ap(0))=
q_\ap(1)(\vp_\ap, |\Delta_\ap(0)|+|\Delta_\ap(1)|) 
\\ =q_\ap(1)(\vp_\ap, 2\rho- \sum_{i\ge 2}|\Delta_\ap(i)|)
=q_\ap(1)(\vp_\ap, 2\rho- (q_\ap-q_\ap(1))\vp_\ap)
\\ =q_\ap(1) (\vp_\ap,\vp_\ap){\cdot}\bigl(q_\ap-(q_\ap-q_\ap(1))\bigr)=
q_\ap(1)^2{\cdot} (\vp_\ap,\vp_\ap) .   \qedhere
\end{multline*}
\end{proof}

\begin{cl}    \label{cor:delta-top-ab}  \leavevmode\par
{\sf (i)} \ If\/ $\vp_\ap$ is cominuscule, then  $(\vp_\ap,\vp_\ap)=m/2(h^*)^2$, $\delta_\ap(m)=m/2$,
and $m=c_{\ap\ap}h^*$.
\\ \indent
{\sf (ii)} \ If\/ $\theta$ is fundamental and $(\widehat\ap,\theta)\ne 0$, then 
$(\vp_{\widehat\ap},\vp_{\widehat\ap})=1/h^*$ and $\delta_{\widehat\ap}(m)=(h^*-2)^2/h^*$.
\end{cl}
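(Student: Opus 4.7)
The strategy for both parts is to combine Proposition~\ref{prop:delta-top} with the specific values of $q_\ap(1)$, $(\vp_\ap,\vp_\ap)$, and $\gamma_\ap(1)$ already supplied by the earlier material; no fresh computation is required beyond substitution.

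For part {\sf (i)}, cominuscule means $d_\ap=1$, so $\g_\ap(1)$ itself is an abelian subspace of dimension $m$. Theorem~\ref{thm:main3} together with Corollary~\ref{cor:2.2}({\sf ii}) then gives $\delta_\ap(m)=m\gamma_\ap(1)=m/2$. Also, $d_\ap=1$ forces $q_\ap(1)=q_\ap$, and substituting $\gamma_\ap(1)=1/2$ into Theorem~\ref{thm:s-znach-1} yields $q_\ap=h^* r_\ap$. Plugging both into Proposition~\ref{prop:delta-top} and using the identity $(\vp_\ap,\vp_\ap)=c_{\ap\ap}/(2h^* r_\ap)$ recorded just before it gives
\[
    m/2=\delta_\ap(m)=(h^* r_\ap)^2\cdot\frac{c_{\ap\ap}}{2h^* r_\ap}=\frac{c_{\ap\ap}h^* r_\ap}{2},
\]
so $m=c_{\ap\ap}h^* r_\ap$. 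To upgrade this to the asserted $m=c_{\ap\ap}h^*$ one needs $r_\ap=1$, i.e., that a cominuscule fundamental weight is always carried by a long simple root. This is the only step that is not completely formal; it is read off from the classification of cominuscule weights (the non-simply-laced cases contribute only $\ap_1\in\GR{B}{n}$ and $\ap_n\in\GR{C}{n}$, both long, while $\GR{F}{4}$ and $\GR{G}{2}$ admit no cominuscule weights). Once $r_\ap=1$ is in hand, the formula $(\vp_\ap,\vp_\ap)=m/(2(h^*)^2)$ follows from the same identity.

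For part {\sf (ii)}, the hypothesis that $\theta$ is fundamental means precisely $\theta=\vp_{\widehat\ap}$, because a dominant weight is fundamental exactly when it has a single nonzero coordinate in the basis of fundamental weights, and by definition of $\widehat\ap$ that coordinate is $(\theta,\widehat\ap^\vee)=1$. Hence $(\vp_{\widehat\ap},\vp_{\widehat\ap})=(\theta,\theta)=1/h^*$ is immediate. Theorem~\ref{thm:otmetki}({\sf 3$^o$}) supplies $d_{\widehat\ap}=2$, $q_{\widehat\ap}=h^*-1$, and $\widehat\ap\in\Pi_l$, while the identity $\Delta_{\widehat\ap}(2)=\{\theta\}$ displayed inside the proof of Corollary~\ref{cor:2.2}({\sf iii}) gives $q_{\widehat\ap}(2)=1$ and hence $q_{\widehat\ap}(1)=h^*-2$. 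Substituting into Proposition~\ref{prop:delta-top} then yields $\delta_{\widehat\ap}(m)=(h^*-2)^2/h^*$.

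The whole argument is bookkeeping layered on top of Proposition~\ref{prop:delta-top}; the only piece that genuinely requires any verification is the case-by-case observation in {\sf (i)} that cominuscule weights sit on long simple roots, which can be consulted directly in Example~\ref{ex:spisok-q}.
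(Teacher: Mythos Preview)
Your argument is correct and matches the paper's approach: both parts reduce to substituting the known values of $q_\ap(1)$ and $(\vp_\ap,\vp_\ap)$ into Proposition~\ref{prop:delta-top}. The paper is terser---it simply asserts $r_\ap=1$ in part~{\sf (i)} without comment---so your explicit case-check is a welcome addition; note, however, that a uniform argument is available: when $d_\ap=1$ the root $\theta$ lies in $\Delta_\ap(1)$, and since $\ap$ and $\theta$ are respectively the minimal and maximal roots of the simple $\g_\ap(0)$-module $\g_\ap(1)$, they are $W_\ap$-conjugate and hence of equal length.
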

\begin{proof}
{\sf (i)} \ Here $r_\ap=1$ and $\g_\ap(1)$ is commutative, hence $q_\ap=q_\ap(1)=h^*$, 
$\gamma_\ap(1)=1/2$, and $\delta_\ap(k)=k/2$ for every $k$. Then  $(h^*)^2(\vp_\ap,\vp_\ap)=m/2$, and we are done.
\\ \indent
{\sf (ii)} \ Here $q_{\widehat\ap}(1)=h^*-2$ (cf. the proof of Corollary~\ref{cor:2.2}(iii)) and 
$\vp_{\widehat\ap}=\theta$, i.e., $(\vp_{\widehat\ap},\vp_{\widehat\ap})=1/h^*$. Note also that here $c_{\widehat\ap\widehat\ap}=\hot_{\widehat\ap}(\theta)=2$.
\end{proof}

\begin{prop}     \label{prop:hot_ap-etc}
For any $\ap\in\Pi$  and $k\in\BN$, we have  
\[
    \hot_\ap(|\Delta_\ap(k)|)=k{\cdot} \dim\g_\ap(k)=c_{\ap\ap} q_\ap(k) .
\]
\end{prop}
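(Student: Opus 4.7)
The plan is to unpack both equalities directly from the definitions. The first equality is a one-line computation: by additivity of $\hot_\ap$,
$$\hot_\ap(|\Delta_\ap(k)|)=\sum_{\gamma\in\Delta_\ap(k)}\hot_\ap(\gamma).$$
Every $\gamma\in\Delta_\ap(k)$ satisfies $\hot_\ap(\gamma)=k$ by the very definition of $\Delta_\ap(k)$, and $\#\Delta_\ap(k)=\dim\g_\ap(k)$ since $\Delta_\ap(k)$ is the set of $\te$-weights of $\g_\ap(k)$ (all of multiplicity one). Thus the sum equals $k\cdot\dim\g_\ap(k)$.

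For the second equality I would use the refinement of Lemma~\ref{prop:edge-labels} recorded in Remark~\ref{rem:refinement-q}(1): since $\Delta_\ap(i)$ is the weight set of a $\g_\ap(0)$-module and $W_\ap$ stabilises it, $|\Delta_\ap(k)|$ is $W_\ap$-invariant and hence proportional to $\vp_\ap$, with proportionality constant $q_\ap(k)\in\BN$. Therefore
$$\hot_\ap(|\Delta_\ap(k)|)=q_\ap(k)\,\hot_\ap(\vp_\ap).$$
Expanding $\vp_\ap=\sum_{\beta\in\Pi}c_{\ap\beta}\beta$ via the inverse Cartan matrix (as recalled just before Proposition~\ref{prop:delta-top}) gives $\hot_\ap(\vp_\ap)=c_{\ap\ap}$, completing the identification with $c_{\ap\ap}q_\ap(k)$.

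There is no genuine obstacle here; the statement is essentially a bookkeeping identity combining (a) that $\ap$-height is constant on $\Delta_\ap(k)$, (b) that $|\Delta_\ap(k)|\in \BN\vp_\ap$, and (c) that the $\ap$-coordinate of $\vp_\ap$ in the simple root basis is $c_{\ap\ap}$. As a minor sanity check one may note the consistency with the global identity $\sum_{k\ge 1}c_{\ap\ap}q_\ap(k)=c_{\ap\ap}q_\ap=\hot_\ap(|\mathcal R_\ap|)=\sum_{k\ge 1}k\dim\g_\ap(k)$, which is just the weighted count of positive roots with positive $\ap$-height.
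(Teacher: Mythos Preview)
Your proof is correct and is essentially the paper's argument rephrased in terms of the coordinate functional $\hot_\ap$ rather than the pairing $(\,\cdot\,,\vp_\ap)$; the two differ only by the scalar $(\ap,\vp_\ap)$, and each step matches one-to-one.
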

\begin{proof} 
Since $(\nu,\vp_\ap)=k(\ap,\vp_\ap)$ for any $\nu\in \Delta_\ap(k)$ and $\dim\g_\ap(k)=\#
\Delta_\ap(k)$, we have  
$(|\Delta_\ap(k)|,\vp_\ap)=k{\cdot} \dim\g_\ap(k)(\ap,\vp_\ap)$. On the other hand, 
\[
  (|\Delta_\ap(k)|,\vp_\ap)=\bigl(\hot_\ap(|\Delta_\ap(k)|)\ap+\dots,\vp_\ap\bigr)=
  \hot_\ap(|\Delta_\ap(k)|){\cdot}(\ap,\vp_\ap) ,
\] 
which gives the first equality.  Likewise, 
\[
  (|\Delta_\ap(k)|,\vp_\ap)=q_\ap(k){\cdot}(\vp_\ap,\vp_\ap)=q_\ap(k){\cdot}c_{\ap\ap}(\ap,\vp_\ap) .  \qedhere
\]
\end{proof}
\begin{cl}
The ratio $\bigl(k{\cdot}\dim\g_\ap(k)\bigr)/q_\ap(k)=c_{\ap\ap}$ does not depend on $k$. In particular, any linear 
relation between the $q_\ap(i)$'s translates into a linear relation between the $\dim\g_\ap(i)$'s. 
\end{cl}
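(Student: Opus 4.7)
The plan is to read off both assertions directly from Proposition \ref{prop:hot_ap-etc}, so essentially no new work is required; the role of the corollary is just to repackage the identity $k\cdot\dim\g_\ap(k)=c_{\ap\ap}q_\ap(k)$ in a form convenient for later use.

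For the first sentence, I would simply observe that the proposition gives $k\cdot\dim\g_\ap(k)=c_{\ap\ap}\,q_\ap(k)$ for every $k\ge 1$ with $\g_\ap(k)\ne 0$. Since $q_\ap(k)>0$ for $1\le k\le d_\ap$ (cf.\ Remark~\ref{rem:refinement-q}(1)), dividing yields $(k\cdot\dim\g_\ap(k))/q_\ap(k)=c_{\ap\ap}$, and the right-hand side depends only on $\ap$, not on $k$. This is the asserted independence.

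For the second sentence, I would turn the identity into a dictionary between the two sequences. Write $q_\ap(k)=(k/c_{\ap\ap})\dim\g_\ap(k)$ or, equivalently, $\dim\g_\ap(k)=(c_{\ap\ap}/k)q_\ap(k)$. Then any linear relation $\sum_{i\ge 1}a_i\,q_\ap(i)=0$ is equivalent, after multiplying by $c_{\ap\ap}$ and using Proposition~\ref{prop:hot_ap-etc}, to $\sum_{i\ge 1}(i\,a_i)\dim\g_\ap(i)=0$, and conversely a relation $\sum b_i\dim\g_\ap(i)=0$ rewrites as $\sum(b_i/i)q_\ap(i)=0$. Thus relations among $\{q_\ap(i)\}$ and among $\{\dim\g_\ap(i)\}$ are in bijective correspondence via $a_i\mapsto i a_i$.

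There is no real obstacle here: both claims follow directly from the equality in Proposition~\ref{prop:hot_ap-etc} together with the positivity of $q_\ap(i)$ (to justify division) and of the $i$'s (to justify the translation of relations). The only minor point worth mentioning is that the ``linear relation'' statement is to be understood over $\BQ$ (or any field of characteristic zero), so that one may freely divide by $i$; this is automatic in our setting since $\cha\bbk=0$.
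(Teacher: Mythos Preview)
Your proposal is correct and matches the paper's approach: the corollary is stated without proof there, as it is an immediate consequence of the identity $k\cdot\dim\g_\ap(k)=c_{\ap\ap}\,q_\ap(k)$ from Proposition~\ref{prop:hot_ap-etc}. Your remarks on positivity of $q_\ap(k)$ and the bijection $a_i\leftrightarrow i\,a_i$ simply spell out what the paper leaves implicit.
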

{\bf Example}. By Proposition~\ref{prop:simmetri-d_ap}, one has $q_\ap(d-i)=q_\ap(i)$. Hence 
$(d-i){\cdot}\dim\g_\ap(d-i)=i{\cdot}\dim\g_\ap(i)$.
In particular, $\dim\g_\ap(d-1)=\dim\g_\ap(1)/(d-1)$.

\section{Maximal abelian subspaces and applications}
\label{sect:applic}

\noindent
We say that $\g_\ap(1)$ has an {\it abelian subspace of half-dimension}, if there is 
$\ah\subset\g_\ap(1)$ such that $[\ah,\ah]=0$ and $\dim\ah=\frac{1}{2}\dim \g_\ap(1)$. First we 
discuss some consequences of this property.

\begin{prop}    \label{prop:all-sigma(i)}
Suppose that $\g_\ap(1)$ has an abelian subspace of half-dimension, $m=\dim\g_\ap(1)$, and
$k\le m/2$. Then $\delta_\ap(k)=k\gamma_\ap(1)$ and
\beq    \label{eq:delta1(m-k)}
      \delta_\ap(m-k)=k\gamma_\ap(1)+(m-2k)\frac{q_\ap(1)}{2h^*r_\ap} .
\eeq
\end{prop}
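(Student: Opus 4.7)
The plan is to prove the two assertions in turn, reusing the machinery already set up in Theorem~\ref{thm:main3} and in the proof of Theorem~\ref{thm:comm-dim}. For the first equality $\delta_\ap(k)=k\gamma_\ap(1)$ with $k\le m/2$: if $\ah\subset\g_\ap(1)$ denotes the given abelian subspace of dimension $m/2$, then any $k$-dimensional subspace of $\ah$ is again abelian, so Theorem~\ref{thm:main3} forces the claimed equality.

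For the second formula I would imitate the decomposition used in the proof of Theorem~\ref{thm:comm-dim}. Writing $\g_\ap(0)=\es\oplus\langle h_\ap\rangle$, where $h_\ap$ acts as $i$ on $\g_\ap(i)$, split
\[
   \eus C_\ap(0)=\eus C_\es+h_\ap^2/(h_\ap,h_\ap),
\]
let $F_i$ denote the maximal $\eus C_\es$-eigenvalue on $\bigwedge^i\!\g_\ap(1)$, and set $\chi:=1/(h_\ap,h_\ap)$. Then $\delta_\ap(i)=F_i+i^2\chi$. The key structural input is $F_{m-k}=F_k$: indeed, $\bigwedge^m\!\g_\ap(1)$ is a one-dimensional $\es$-module, hence trivial (equivalently, $|\Delta_\ap(1)|=q_\ap(1)\vp_\ap$ vanishes on $\te\cap\es$), so as $\es$-modules $\bigwedge^{m-k}\!\g_\ap(1)\cong(\bigwedge^k\!\g_\ap(1))^*$, which has the same list of $\eus C_\es$-eigenvalues as $\bigwedge^k\!\g_\ap(1)$. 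Using the first part, $F_k=k\gamma_\ap(1)-k^2\chi$, and therefore
\[
   \delta_\ap(m-k)=F_k+(m-k)^2\chi=k\gamma_\ap(1)+m(m-2k)\chi.
\]

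To finish, I would identify $m\chi$ with $q_\ap(1)/(2h^*r_\ap)$ by cross-checking with Proposition~\ref{prop:delta-top} rather than by a direct computation of $(h_\ap,h_\ap)$. Evaluating the formula just derived at $k=0$ yields $\delta_\ap(m)=m^2\chi$; comparing with $\delta_\ap(m)=q_\ap(1)^2(\vp_\ap,\vp_\ap)$ from Proposition~\ref{prop:delta-top} and using $m=c_{\ap\ap}q_\ap(1)$ from Proposition~\ref{prop:hot_ap-etc} together with the identity $(\vp_\ap,\vp_\ap)=c_{\ap\ap}/(2h^*r_\ap)$ recorded before Proposition~\ref{prop:delta-top}, one extracts $m\chi=q_\ap(1)/(2h^*r_\ap)$, and substituting back gives~\eqref{eq:delta1(m-k)}.

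The only delicate point is the equality $F_{m-k}=F_k$: one needs to observe that $\bigwedge^{m-k}\!\g_\ap(1)$ and $(\bigwedge^k\!\g_\ap(1))^*$ are isomorphic $\es$-modules, and then invoke the standard fact that a module and its dual carry the same Casimir eigenvalue on each isotypic component. Everything else is routine numerology in $h^*$, $r_\ap$, and $c_{\ap\ap}$, with Proposition~\ref{prop:delta-top} conveniently absorbing the explicit value of $(h_\ap,h_\ap)$.
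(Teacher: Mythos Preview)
Your argument is correct and follows essentially the same route as the paper: invoke Theorem~\ref{thm:main3} for $k\le m/2$, use the splitting $\eus C_\ap(0)=\eus C_\es+h_\ap^2/(h_\ap,h_\ap)$ and the symmetry $F_k=F_{m-k}$ from the proof of Theorem~\ref{thm:comm-dim}, and then identify $m\chi$ via Proposition~\ref{prop:delta-top} together with $m=c_{\ap\ap}q_\ap(1)$ and $(\vp_\ap,\vp_\ap)=c_{\ap\ap}/(2h^*r_\ap)$. Your justification of $F_{m-k}=F_k$ via the $\es$-module duality $\bigwedge^{m-k}\!\g_\ap(1)\cong(\bigwedge^k\!\g_\ap(1))^*$ is in fact slightly more precise than the paper's phrasing, but the substance is identical.
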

\begin{proof}
The first relation follows from Theorem~\ref{thm:main3}. Next, we know that 
$\delta_\ap(k)=F_k+k^2\chi$, see the proof of Theorem~\ref{thm:comm-dim}. Hence
$F_k=k\gamma_\ap(1)-k^2\chi$. Since $F_k=F_{m-k}$, we obtain
\beq        \label{eq:delta2(m-k)}
  \delta_\ap(m-k)=F_{m-k}+(m-k)^2\chi=F_{k}+(m-k)^2\chi=k\gamma_\ap(1)+m(m-2k)\chi .
\eeq
As $F_0=F_m=0$, we compute $\chi$ and $\delta_\ap(m)$ using the numerology of Section~\ref{sect:3}:
\[
  m^2\chi=\delta_\ap(m)=q_\ap(1)^2{\cdot}(\vp_\ap,\vp_\ap)=
  q_\ap(1)^2{\cdot}c_{\ap\ap}{\cdot}(\ap,\ap)/2=\frac{m q_\ap(1)}{2h^*r_\ap} .
\] 
Here the relation $q_\ap(1){\cdot} c_{\ap\ap}=\dim\g_\ap(1)$ is used, see Prop.~\ref{prop:hot_ap-etc}.  Thus, 
$m\chi=\displaystyle \frac{q_\ap(1)}{2h^*r_\ap}$ and plugging this into Eq.~\eqref{eq:delta2(m-k)}
yields Eq.~\eqref{eq:delta1(m-k)}.
\end{proof}
{\bf Remark.} We obtain here a formula for $\delta_\ap(i)$ for {\bf all} $i\in\{1,\dots,m\}$. More generally, if $\max(\dim\ah)=r\le m/2$, then the same argument yields $\delta_\ap(i)$ for $i\le r$ and $i\ge m-r$.
\begin{cl}    \label{prop:gorka-versus}
Under the above assumptions,
\begin{itemize}
\item[\sf (i)] \ if $q_\ap > 2q_\ap(1)$, then $\max_{i}\{\delta_\ap(i)\}=\delta_\ap(m/2)$ and the sequence
$\{\delta_\ap(i)\}$ is unimodal;
\item[\sf (ii)] \ if $q_\ap= 2q_\ap(1)$, then the sequence $\{\delta_\ap(i)\}$ stabilises after $i=m/2$;
\item[\sf (iii)] \ if $q_\ap < 2q_\ap(1)$, then $\max_{i}\{\delta_\ap(i)\}=\delta_\ap(m)$ and the sequence
$\{\delta_\ap(i)\}$ strictly increases.
\end{itemize}
Furthermore, if $d_\ap\ge 3$, then case {\sf (i)} always occurs.
\end{cl}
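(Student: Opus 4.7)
The strategy is to convert Proposition~\ref{prop:all-sigma(i)} into an explicit piecewise-linear formula for $\delta_\ap(j)$ on all of $\{1,\dots,m\}$, compute the single discrete derivative on each piece, and then observe that the sign of the derivative on the upper half is controlled by the number $2q_\ap(1)-q_\ap$.

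First I would write, for $j\le m/2$,
\[
    \delta_\ap(j)=j\gamma_\ap(1)=\frac{jq_\ap}{2h^*r_\ap},
\]
which gives an increasing arithmetic progression with step $\gamma_\ap(1)=q_\ap/(2h^*r_\ap)>0$. For $j\ge m/2$, the substitution $k=m-j$ in Eq.~\eqref{eq:delta1(m-k)} together with $\gamma_\ap(1)=q_\ap/(2h^*r_\ap)$ produces
\[
    \delta_\ap(j)=\frac{1}{2h^*r_\ap}\bigl[(m-j)q_\ap+(2j-m)q_\ap(1)\bigr].
\]
Both pieces agree at $j=m/2$, so $\{\delta_\ap(j)\}_{j=1}^m$ is well-defined and piecewise linear in $j$.

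Next I would compute the discrete differences. For $j\le m/2-1$, $\delta_\ap(j+1)-\delta_\ap(j)=\gamma_\ap(1)>0$, so that branch is strictly increasing. For $j\ge m/2$ a direct subtraction in the second formula gives
\[
    \delta_\ap(j+1)-\delta_\ap(j)=\frac{2q_\ap(1)-q_\ap}{2h^*r_\ap}.
\]
This sign test immediately yields the three cases: if $q_\ap>2q_\ap(1)$, the upper branch strictly decreases, so the maximum is attained at $j=m/2$ and the whole sequence is unimodal; if $q_\ap=2q_\ap(1)$ the upper branch is constant; if $q_\ap<2q_\ap(1)$ the upper branch strictly increases, hence the maximum is at $j=m$ and the whole sequence is strictly increasing (the two slopes differ but both are positive).

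For the last assertion, the point is to check that $q_\ap>2q_\ap(1)$ whenever $d_\ap\ge 3$. Here I would invoke the symmetry $q_\ap(i)=q_\ap(d-i)$ for $1\le i\le d-1$ from Proposition~\ref{prop:simmetri-d_ap}. When $d=d_\ap\ge 3$, the indices $1,d-1,d$ are pairwise distinct and all $q_\ap(i)$ are positive, so
\[
    q_\ap=\sum_{i=1}^{d}q_\ap(i)\ge q_\ap(1)+q_\ap(d-1)+q_\ap(d)=2q_\ap(1)+q_\ap(d)>2q_\ap(1),
\]
which places us in case~{\sf (i)}. I do not expect any real obstacle: everything reduces to the two arithmetic progressions described above, and the only thing to keep track of carefully is the matching of the two branches at $j=m/2$ and the use of the symmetry of the $q_\ap(i)$'s (which requires $d\ge 2$, automatically satisfied when $d\ge 3$).
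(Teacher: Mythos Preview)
Your proof is correct and follows essentially the same approach as the paper: both observe that $\delta_\ap(j)$ is a strictly increasing arithmetic progression for $j\le m/2$, then read off the sign of the linear coefficient on the upper half from Eq.~\eqref{eq:delta1(m-k)} (you do this after the substitution $j=m-k$, the paper works directly with the coefficient of $k$), and both conclude the $d_\ap\ge 3$ case from $q_\ap(1)=q_\ap(d-1)$ via Proposition~\ref{prop:simmetri-d_ap}. Your presentation is a bit more explicit, but there is no substantive difference.
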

\begin{proof} 
The sequence $\{\delta_\ap(i)\}$ clearly increases for $1\le i\le m/2$.
By Theorem~\ref{thm:s-znach-1}, one has $\gamma_\ap(1)=q_\ap/2h^*r_\ap$. Hence the coefficient of 
$k$ in Eq.~\eqref{eq:delta1(m-k)} equals $(q_\ap-2q_\ap(1))/2h^*r_\ap$. This settles {\sf (i)--(iii)}.
\\ 
By Proposition~\ref{prop:simmetri-d_ap}, $q_\ap(1)=q_\ap(d_\ap-1)$ for $d_\ap\ge 2$. Hence $q_\ap>2q_\ap(1)$ whenever $d_\ap\ge 3$.
\end{proof}
\begin{ex}    \label{ex:starshe-korn}
1)  If $d_\ap=2$, then all three possibilities may occur, cf. the good cases $(\GR{D}{n}, \ap_i)$ for 
$2\le i\le n-2$ and sufficiently large $n$. (Use data from Table~\ref{table:d=2c}.)
\\   \indent
2) Suppose that $\theta$ is fundamental, i.e., $\theta=\vp_{\widehat\ap}$.  
Then $\widehat\ap\in\Pi_l$, $d_{\widehat\ap}=2$, and $\Delta_{\widehat\ap}(2)=\{\theta\}$. Since 
$\g_{\widehat\ap}({\ge }1)$ is a Heisenberg Lie algebra, see~\cite[Sect.\,2]{jos76}, $\g_{\widehat\ap}(1)$ has an abelian subspace of half-dimension 
and the above computation applies. Here $m=2h^*-4$, $q_{\widehat\ap}=h^*-1$, 
and $q_{\widehat\ap}(2)=1$. Since $\theta$ is fundamental, 
$h^*\ge 4$ and hence $q_{\widehat\ap} < 2q_{\widehat\ap}(1)$. Then $\gamma_{\widehat\ap}(1)=
(h^*-1)/2h^*$ and $\chi=1/(4h^*)$. Thus, for $k\le m/2=h^*-2$, we obtain
$\delta_{\widehat\ap}(k)=\displaystyle k\frac{(h^*-1)}{2h^*}$ \ and 
\[
   \delta_{\widehat\ap}(m-k)=k\frac{(h^*-1)}{2h^*}+\frac{m(m-2k)}{4h^*}=
   \frac{(h^*-2)^2}{h^*} - k{\cdot}\frac{h^*-3}{2h^*} .
\]
3) For $\GR{C}{n}$ and $n\ge 2$, we have $\theta=2\vp_1$ and $\widehat\ap=\ap_1$ is short. Here 
one computes  that $2q_{\ap_1}(1)>q_{\ap_1}$ for $n>2$ and $\chi=1/(4h^*)$.
\end{ex}
Another application of our theory, especially of Theorem~\ref{thm:main3}, is the following result.

\begin{thm}    \label{thm:abel-complem}
If\/ $\ah\subset\g_\ap(1)$ is an abelian subspace and $\dim\ah=(1/2)\dim\g_\ap(1)$, then there is an
abelian subspace $\tilde\ah$ such that $\ah\oplus\tilde\ah=\g_\ap(1)$.
\end{thm}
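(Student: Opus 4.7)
The plan is to use Theorem~\ref{thm:main3} to translate the problem into one about the maximal eigenspace of $\eus C_\ap(0)$ in $\bigwedge^{m/2}\g_\ap(1)$, where $m:=\dim\g_\ap(1)$. Set $w := \bigwedge^{m/2}\ah$. By Theorem~\ref{thm:main3} applied with $k=m/2$, $w$ is a nonzero element of the eigenspace $E \subseteq \bigwedge^{m/2}\g_\ap(1)$ for the maximal eigenvalue $(m/2)\gamma_\ap(1)$, and $E$ is spanned by the decomposable polyvectors $\bigwedge^{m/2}\ah'$ as $\ah'$ ranges over abelian subspaces of $\g_\ap(1)$ of dimension $m/2$.

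First I would reduce the theorem to the following claim: there exists $v\in E$ with $v\wedge w\ne 0$ in $\bigwedge^m\g_\ap(1)$. Writing such $v=\sum_i c_i\bigwedge^{m/2}\ah'_i$ as a combination of decomposable polyvectors attached to abelian half-dimensional $\ah'_i$, at least one summand must satisfy $\bigwedge^{m/2}\ah'_i\wedge w\ne 0$, which is equivalent to $\ah\oplus\ah'_i=\g_\ap(1)$; then $\tilde\ah := \ah'_i$ is the desired abelian complement.

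Next, to exhibit such a $v$, I would analyse the $\g_\ap(0)$-equivariant wedge pairing $\omega:E\otimes E\to\bigwedge^m\g_\ap(1)$ and its left radical $L:=\{u\in E\mid u\wedge E=0\}$, a $\g_\ap(0)$-submodule of $E$. By Proposition~\ref{prop:mult-free}, $E$ is multiplicity-free, so $L$ is determined by which simple summands it contains. If $L\ne E$, then since $G_\ap(0)$ permutes decomposable polyvectors attached to abelian half-dimensional subspaces among themselves, a Schur-type argument applied to the simple submodule of $E$ containing $w$ yields $w\notin L$, completing the reduction. Thus the crux is to establish $L\ne E$, equivalently to produce a transverse pair of $(m/2)$-dimensional abelian subspaces in $\g_\ap(1)$.

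The main obstacle is constructing such a transverse pair. Applying Borel's fixed-point theorem to the closed $G_\ap(0)$-invariant subvariety of $(m/2)$-dimensional abelian subspaces of $\g_\ap(1)$, there exists a $\be_\ap(0)$-stable one $\ah_+=\bigoplus_{\mu\in\Delta_+}\g^\mu$, with $\Delta_+$ an upper $\Delta^+_\ap(0)$-ideal of cardinality $m/2$ in $\Delta_\ap(1)$. The candidate complement $\ah_-:=\bigoplus_{\mu\in\Delta_-}\g^\mu$ for $\Delta_-:=\Delta_\ap(1)\setminus\Delta_+$ is automatically a vector-space complement to $\ah_+$, and it remains to verify that $\ah_-$ is itself abelian. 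I would argue this by contradiction: by maximality, $\ah_+$ coincides with its own centraliser in $\g_\ap(1)$ (for otherwise one would immediately obtain an abelian subspace of dimension $>m/2$, contradicting Theorem~\ref{thm:comm-dim}); so to each $\mu\in\Delta_-$ there corresponds a nonempty set $T_\mu\subseteq\Delta_+$ with $\mu+T_\mu\subseteq\Delta_\ap(2)$. Assuming for contradiction that $\mu_1+\mu_2\in\Delta_\ap(2)$ for some $\mu_1,\mu_2\in\Delta_-$, one then performs a careful combinatorial replacement in $\Delta_+$ (exchanging certain $\nu\in T_{\mu_1}\cup T_{\mu_2}$ for elements of $\Delta_-$ in a way that respects the upper-ideal structure) to construct an abelian subspace of $\g_\ap(1)$ of dimension strictly greater than $m/2$, once again contradicting Theorem~\ref{thm:comm-dim}. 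This combinatorial step is the heart of the proof and is expected to rely essentially on the fact that both $|\Delta_+|$ and $|\Delta_-|$ equal exactly $m/2$.
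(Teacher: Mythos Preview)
Your overall architecture matches the paper's: reduce to producing one transverse pair of half-dimensional abelian subspaces, obtain a $\be_\ap(0)$-stable abelian $\ah_+\subset\g_\ap(1)$ via Borel's fixed-point theorem, and then argue that the $\te$-stable complement $\ah_-$ is itself abelian. The reduction from the general $\ah$ to this situation is also done in the paper (slightly differently: the paper observes that transversality to a fixed $\tilde\ah_1$ is an open condition on the Grassmannian, so some point of $B_\ap(0){\cdot}\{\ah\}$ is transverse, and then one transports back by the group action).

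The genuine gap is the step you yourself flag as ``the heart of the proof''. You propose to show that $\ah_-$ is abelian by a combinatorial replacement argument inside $\Delta_\ap(1)$, but you do not carry it out, and it is not clear this route succeeds cleanly. The paper avoids combinatorics entirely here and instead uses the Casimir machinery already in place. Since $\ah_-$ is $\be_\ap(0)^-$-stable, the polyvector $\tilde y\in\bigwedge^{m/2}\ah_-$ is a \emph{lowest} weight vector in $\bigwedge^{m/2}\g_\ap(1)$, with weight $\mathsf{wt}(\tilde y)=q_\ap(1)\vp_\ap-\mathsf{wt}(y)$, where $y\in\bigwedge^{m/2}\ah_+$ is the highest weight vector with $\eus C_\ap(0)$-eigenvalue $\frac{m}{2}\gamma_\ap(1)$. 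A direct computation of $(w_{\ap,0}(\mathsf{wt}(\tilde y)),\,w_{\ap,0}(\mathsf{wt}(\tilde y))+2\rho_\ap(0))=(\mathsf{wt}(\tilde y),\,\mathsf{wt}(\tilde y)-2\rho_\ap(0))$, using only $(\vp_\ap,\rho_\ap(0))=0$, $(\vp_\ap,\gamma)=(\vp_\ap,\ap)$ for $\gamma\in\Delta_\ap(1)$, and the identities from Proposition~\ref{prop:delta-top} and Proposition~\ref{prop:hot_ap-etc}, shows that the $\eus C_\ap(0)$-eigenvalue on $\tilde y$ is again $\frac{m}{2}\gamma_\ap(1)$. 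By Theorem~\ref{thm:main3} this forces $\ah_-$ to be abelian. This replaces your unfinished combinatorial step with a short explicit calculation.

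A minor point: your ``Schur-type'' reduction is slightly imprecise. The decomposable vector $w=\bigwedge^{m/2}\ah$ need not lie in a single simple summand of $E$, so there is no canonical ``simple submodule containing $w$''. What works is that the $G_\ap(0)$-orbit of $w$ spans $E$ (decomposable abelian polyvectors span $E$ by Theorem~\ref{thm:main3}), so it cannot be contained in the proper submodule $L$; hence some $g{\cdot}w$ pairs nontrivially with $E$, and transporting back by $g^{-1}$ gives an abelian complement for $\ah$.
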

\begin{proof}
As above, $m=\dim\g_\ap(1)$ and  $\be_\ap(0)=\be\cap\g_\ap(0)$.
\\ \indent
1.  Assume  that $\ah$ is a $\be_\ap(0)$-{\it stable\/} abelian subspace. In particular, $\ah$ is 
$\te$-stable. Therefore, there is a unique $\te$-stable complement $\tilde\ah$ to $\ah$ in $\g_\ap(1)$.
Then $\tilde\ah$ is $\be_\ap(0)^-$-stable.
Choose nonzero (poly)vectors $y\in\bigwedge^{m/2}\ah$ and $\tilde y\in\bigwedge^{m/2}\tilde\ah$.
Then $y$ (resp. $\tilde y$) is a highest (resp. lowest) weight vector in the $\g_\ap(0)$-module
$\bigwedge^{m/2}\g_\ap(1)$.
If $\mathsf{wt}(\cdot)$ stands for the $\te$-weight of a (poly)vector, then
\[
          \mathsf{wt}(y) + \mathsf{wt}(\tilde y)=|\Delta_\ap(1)|=q_\ap(1)\vp_\ap .
\]
As was already computed in Proposition~\ref{prop:delta-top},
\[
    \delta_\ap(m)=(q_\ap(1)\vp_\ap, q_\ap(1)\vp_\ap+2\rho_\ap(0))=q_\ap(1)^2(\vp_\ap,\vp_\ap)=\frac{mq_\ap(1)}{2h^*r_\ap} .
\]
Since $y$ is a highest weight vector in $\bigwedge^{m/2}\g_\ap(1)$ and $\ah$ is an abelian subspace,
\[
  \eus C_\ap(0)(y)=(\mathsf{wt}(y),\mathsf{wt}(y)+2\rho_\ap(0)){\cdot}y=\frac{m}{2}\gamma_\ap(1){\cdot}y.
\]
On the other hand, $\mathsf{wt}(\tilde y)$ is anti-dominant w.r.t. $\be_\ap(0)$. Hence the weight
$w_{\ap,0}(\mathsf{wt}(\tilde y))$ is already dominant and the $\eus C_\ap(0)$-eigenvalue of 
$\tilde y$ equals
\begin{multline*}
  \bigl(w_{\ap,0}(\mathsf{wt}(\tilde y)),w_{\ap,0}(\mathsf{wt}(\tilde y))+2\rho_\ap(0)\bigr)=
  \bigl(\mathsf{wt}(\tilde y),\mathsf{wt}(\tilde y)-2\rho_\ap(0)\bigr) \\
  =(q_\ap(1)\vp_\ap-\mathsf{wt}(y), q_\ap(1)\vp_\ap-\mathsf{wt}(y)-2\rho_\ap(0)) \\
  =q_\ap(1)^2(\vp_\ap,\vp_\ap)-2\bigl(q_\ap(1)\vp_\ap, \mathsf{wt}(y)+\rho_\ap(0)\bigr)+\bigl(\mathsf{wt}(y),\mathsf{wt}(y)+2\rho_\ap(0)\bigr)\\
  = \frac{mq_\ap(1)}{2h^*r_\ap}-2\bigl(q_\ap(1)\vp_\ap, \mathsf{wt}(y)\bigr)+\frac{m}{2}\gamma_\ap(1)\\
  =\frac{mq_\ap(1)}{2h^*r_\ap}-2q_\ap(1){\cdot}\frac{m}{2}{\cdot}(\vp_\ap,\ap)+\frac{m}{2}\gamma_\ap(1)
  = \frac{m}{2}\gamma_\ap(1) .
\end{multline*}
Here we used the facts that $(\vp_\ap,\rho_\ap(0))=0$ and 
$(\vp_\ap,\gamma)=(\vp_\ap,\ap)=(\ap,\ap)/2$ for any
$\gamma\in\Delta_\ap(1)$. By Theorem~\ref{thm:main3}, the equality
$\eus C_\ap(0)(\tilde y)=\frac{m}{2}\gamma_\ap(1){\cdot}\tilde y$ for an $m/2$-vector $\tilde y$
means that the $m/2$-dimensional subspace $\tilde \ah$ is abelian.
\\ \indent 
2. If $\ah$ is not $\be_\ap(0)$-stable, then we consider the $B_\ap(0)$-orbit of $\{\ah\}$ in the
Grassmannian of $m/2$-dimensional subspaces of $\g_\ap(1)$. By the Borel fixed-point theorem, the
closure of this orbit contains a $B_\ap(0)$-fixed point, i.e., a $\be_\ap(0)$-stable (abelian) subspace,
say $\ah_1$. If $\tilde\ah_1$ is the complementary abelian subspace for $\ah_1$, as in part~1, then,
by continuity,  it is also a complementary subspace for some element of the orbit 
$B_\ap(0){\cdot}\{\ah\}$.
\end{proof}

Previous results show that it is helpful to know whether $\g_\ap(1)$ has an abelian 
subspace of half-dimension, if $d_\ap{>}1$. We say that $\ap\in\Pi$ is {\it good} if this is the case; 
otherwise, $\ap$ is {\it bad}. In many cases, a $(\BZ,\ap)$-grading is also the Dynkin grading associated 
with a {\it strictly odd\/} nilpotent element of $\g$, see~\cite[Sect.\,1]{e-j-k}. Then the relevant good cases 
have been determined in~\cite{e-j-k}. However, some work is still needed for the $(\BZ,\ap)$-gradings that are not Dynkin. For instance, if $\g$ is exceptional, then one has to handle the 
possibilities $(\GR{E}{7}, \ap_3 \text{ or } \ap_7)$  and $(\GR{E}{6}, \ap_2 \text{ or } \ap_4)$. Combining
our computations with \cite{e-j-k}, we describe below the bad cases for all $\g$. For each bad case, the maximal dimension of an abelian subspace, $\dim\ah_{\sf max}$, is given.
Note that in order to compute $\dim\ah_{\sf max}$, it suffices to consider only $\be_\ap(0)$-stable
abelian subspaces of $\g_\ap(1)$, cf. part 2) in the proof of Theorem~\ref{thm:abel-complem}.

\textbullet\  For the classical series, we have $d_\ap\le 2$. If $\g=\spn$ or $\sone$, then 
all $\ap\in\Pi$ with $d_\ap=2$ are good. If $\g=\sono$, $n\ge 3$, then the bad cases occur for $\ap_i$ with $3\le i\le n$. Here $\dim\g_{\ap_i}(1)=2i(n-i)+i$ 
and $\dim\ah_{\sf max}=i(n-i)+1$. Note also that, for $\sone$ and $\sono$, the $(\BZ,\ap_i)$-grading is 
Dynkin and associated with a strictly odd nilpotent if and only if $i$ is even (and $d_{\ap_i}=2$).

\textbullet\  For the exceptional algebras, we gather the bad cases in Table~\ref{table:bad-cases},
where we write $m_\ap$ for $\dim\g_\ap(1)$.
\begin{table}[ht]
\caption{Exceptional Lie algebras, the bad cases}   \label{table:bad-cases}
\begin{center}
\begin{tabular}{>{$}c<{$} >{$}c<{$} >{$}c<{$} >{$}c<{$} >{$}c<{$} c ||>{$}c<{$} >{$}c<{$} >{$}c<{$} >{$}c<{$}  >{$}c<{$} c | }
\g & \ap & d_\ap & m_\ap & \dim\ah_{\sf max} & \cite{e-j-k} & \g & \ap & d_\ap & m_\ap & \dim\ah_{\sf max} 
& \cite{e-j-k}\\ \hline \hline
\GR{E}{7} & \ap_3 & 3 & 30 &12 & - 
    & \GR{E}{8} & \ap_3 & 4 & 48 &16 & +   \\
      & \ap_7 & 2 & 35 &15 & - 
     &  & \ap_4 & 5 & 40 & 16 & +      \\  \cline{1-6} 
   \GR{F}{4} & \ap_1 & 2 & 8 & 2 & + 
      &  & \ap_7 & 2 & 64 &22 & +    \\   
       & \ap_2 & 4 & 6 & 2 & + 
       &  & \ap_8 & 3 & 56 & 21 & +\\   \hline
\end{tabular}
\end{center}
\end{table}

The data in Table~\ref{table:bad-cases} also mean that the non-Dynkin cases 
$(\GR{E}{6}, \ap_2 \text{ or } \ap_4)$ are good, cf. also Example~\ref{F4-ap1}(2). 
The signs $+/-$ indicate whether that item represents a Dynkin grading (=\,is considered 
in~\cite{e-j-k}).

Our methods for constructing abelian subspaces of $\g(1)$, partly  for arbitrary $\BZ$-gradings, are 
described below. This provides another approach to some of calculations in~\cite{e-j-k} and also
natural descriptions of abelian subspaces of maximal dimension.

\begin{lm}    \label{lm:ab-for-d=2}
Let $\g=\bigoplus_{i=-2}^2\g(i)$ be a $\BZ$-grading of height~2 and $\be(0)$ a Borel 
subalgebra of\/ $\g(0)$. If\/ $\ah$ is a $\be(0)$-stable abelian subspace of\/ $\g(1)$, then 
$\ah\oplus\g(2)$ is an abelian $\be$-ideal of\/ $\g$, where $\be=\be(0)\oplus\g(1)\oplus\g(2)$.
In particular, if\/ $\dim\ah$ is maximal, then $\ah\oplus\g(2)$ is a maximal abelian $\be$-ideal.
\end{lm}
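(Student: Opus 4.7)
The plan is to treat the two claims of the lemma in order.

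For the first claim, I would verify directly that $\ah\oplus\g(2)$ is abelian and a $\be$-ideal, using the sparsity of the $\BZ$-grading. Height $2$ means $\g(i)=0$ for $i\ge 3$, so $[\ah,\g(2)]\subset\g(3)=0$ and $[\g(2),\g(2)]\subset\g(4)=0$, while $[\ah,\ah]=0$ by hypothesis; hence $\ah\oplus\g(2)$ is abelian. For the ideal property, I would decompose $\be=\be(0)\oplus\g(1)\oplus\g(2)$ and check the six relevant bracket pairs: $[\be(0),\ah]\subset\ah$ by the $\be(0)$-stability of $\ah$; $[\be(0),\g(2)]\subset\g(2)$ since $\g(2)$ is a $\g(0)$-module; $[\g(1),\ah]\subset\g(2)$; and all the remaining combinations land in $\g(i)$ with $i\ge 3$, hence vanish.

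For the second claim, my plan is to take an arbitrary abelian $\be$-ideal $\eus I\supseteq\ah\oplus\g(2)$ and analyse its graded pieces $\eus I_i:=\eus I\cap\g(i)$. Since $\te\subset\be(0)\subset\be$, the ideal $\eus I$ is $\te$-stable and hence decomposes along the grading: $\eus I=\eus I_0\oplus\eus I_1\oplus\eus I_2$, and the standard fact that abelian ideals of a Borel subalgebra of a semisimple Lie algebra lie in its nilradical yields $\eus I_0\subset[\be(0),\be(0)]$. Clearly $\eus I_2=\g(2)$ and $\eus I_1\supseteq\ah$. The key observation is that $\eus I_1$ is itself a $\be(0)$-stable abelian subspace of $\g(1)$: stability under $\be(0)$ follows from $[\be(0),\eus I]\subset\eus I$ combined with $[\be(0),\g(1)]\subset\g(1)$, and $[\eus I_1,\eus I_1]\subset[\eus I,\eus I]=0$. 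By maximality of $\dim\ah$, this forces $\eus I_1=\ah$.

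The hard part will be ruling out the residual possibility $\eus I_0\ne 0$. Any nonzero weight vector $e_\gamma\in\eus I_0$ must commute with both $\ah$ and $\g(2)$ and satisfy $[e_\gamma,\g(1)]\subset\ah$. The natural strategy is to leverage such an $e_\gamma$ to produce a strictly larger $\be(0)$-stable abelian subspace of $\g(1)$---for instance, by examining $\ker(\ad e_\gamma)|_{\g(1)}$, whose square-zero property is evident from the above, or by invoking the $\g(0)$-module duality $\g(-1)\simeq\g(1)^{*}$ coming from the Killing form---and thereby contradict the maximality of $\ah$. Should this strategy fail in degenerate cases (e.g.\ when $\g(2)$ is one-dimensional and the semisimple part of $\g(0)$ acts trivially on it), the appropriate reading of ``maximal abelian $\be$-ideal'' is maximal among abelian $\be$-ideals contained in $\g(1)\oplus\g(2)$, which is exactly what the argument of the preceding paragraph establishes.
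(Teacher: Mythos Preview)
The paper states this lemma without proof. Your direct verification of the first claim---that $\ah\oplus\g(2)$ is an abelian $\be$-ideal---is exactly the routine check the paper leaves to the reader, and it is correct. Your argument that $\eus I_1=\ah$ for any abelian $\be$-ideal $\eus I\supseteq\ah\oplus\g(2)$ is also fine.

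Your hesitation about the ``in particular'' clause is fully justified: as literally stated, it is \emph{false}, and your proposed strategy of ruling out $\eus I_0\ne 0$ cannot work in general. Take $\g$ of type $\GR{B}{2}$ with the $(\BZ,\ap_2)$-grading (so $\ap_2$ is short). Then $\Delta_{\ap_2}(1)=\{\ap_2,\ap_1+\ap_2\}$ and $\Delta_{\ap_2}(2)=\{\ap_1+2\ap_2\}$. Since $[\g(1),\g(1)]=\g(2)\ne 0$, every abelian subspace of $\g(1)$ is one-dimensional; the unique $\be(0)$-stable one is $\ah=\g^{\ap_1+\ap_2}$, so $\dim(\ah\oplus\g(2))=2$. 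But the three root spaces for $\{\ap_1,\,\ap_1+\ap_2,\,\ap_1+2\ap_2\}$ span a three-dimensional abelian $\be$-ideal strictly containing $\ah\oplus\g(2)$, with nonzero component $\eus I_0=\g^{\ap_1}$. Hence $\ah\oplus\g(2)$ is not a maximal abelian $\be$-ideal.

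What the paper actually \emph{uses} afterwards is only the first claim: since $\ah\oplus\g(2)$ is an abelian $\be$-ideal, one gets $\dim\ah\le\dim\eus I-\dim\g(2)$ for a suitable maximal abelian $\be$-ideal $\eus I$, and the classification of such $\eus I$ in the cited reference then bounds $\dim\ah$. Your fallback reading---maximal among abelian $\be$-ideals contained in $\g(1)\oplus\g(2)$---is the correct statement that your argument actually proves, and it suffices for the paper's applications.
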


Since the maximal abelian $\be$-ideals are known \cite[Sect.\,4]{adv01}, one readily obtains an upper 
bound on $\dim\ah$. Actually, this allows us to determine $\dim\ah_{\sf max}$ for all $\BZ$-gradings of 
height~$2$. The next observation applies to $(\BZ,\ap)$-gradings of any height. 

\begin{prop}     \label{prop:d_ap-&-d_beta}
Given $\ap\in\Pi$, suppose that $\g_\ap(2)\cap \g_\beta(2)=\{0\}$ for some $\beta\in\Pi$. Then 
$\ah_{\ap,\beta}:=\g_\ap(1)\cap\g_\beta(1)$ is abelian. Moreover, if also $\g_\ap(1)\cap \g_\beta(2)=\{0\}$, 
then $\ah_{\ap,\beta}$ is a $\be_\ap(0)$-stable abelian subspace of $\g_\ap(1)$. 
\end{prop}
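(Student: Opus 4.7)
The plan is to pass to the $\BZ^2$-grading $\g=\bigoplus_{i,j}\g(i,j)$ indexed by $(\hot_\ap,\hot_\beta)$ (this refines both the $(\BZ,\ap)$- and $(\BZ,\beta)$-gradings), and then argue purely via root-space bookkeeping, since every relevant subspace of $\g$ in the statement is a sum of root spaces $\g^\gamma$.

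For abelianness, observe that $\ah_{\ap,\beta}=\bigoplus\g^\gamma$, where $\gamma$ runs over $\Delta^+$ with $\hot_\ap(\gamma)=\hot_\beta(\gamma)=1$. Given two such roots $\gamma_1,\gamma_2$, if $\gamma_1+\gamma_2\notin\Delta$ then $[\g^{\gamma_1},\g^{\gamma_2}]=0$ automatically; if $\gamma_1+\gamma_2\in\Delta$, then $\hot_\ap(\gamma_1+\gamma_2)=\hot_\beta(\gamma_1+\gamma_2)=2$, so $\g^{\gamma_1+\gamma_2}\subset \g_\ap(2)\cap\g_\beta(2)=\{0\}$ by hypothesis. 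Hence $[\ah_{\ap,\beta},\ah_{\ap,\beta}]=0$.

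For $\be_\ap(0)$-stability, I use that $\be_\ap(0)$ is generated as a Lie algebra by $\te$ together with the simple root spaces $\g^\delta$ for $\delta\in\Pi\setminus\{\ap\}$, so it suffices to check that each of these generators maps $\ah_{\ap,\beta}$ into itself. The torus $\te$ preserves every root space and hence every $\te$-stable sum of them, including $\ah_{\ap,\beta}$. For a simple root $\delta\in\Pi\setminus\{\ap\}$ and $\gamma\in\Delta$ with $\hot_\ap(\gamma)=\hot_\beta(\gamma)=1$: if $\delta\neq\beta$, then $\hot_\beta(\delta)=0$, so $[\g^\delta,\g^\gamma]\subset\g^{\delta+\gamma}$ still has $\hot_\ap=\hot_\beta=1$, landing in $\ah_{\ap,\beta}$; if $\delta=\beta$ (which forces $\beta\neq\ap$), then $\delta+\gamma$ has $\hot_\ap=1$ and $\hot_\beta=2$, so $[\g^\beta,\g^\gamma]\subset \g_\ap(1)\cap\g_\beta(2)=\{0\}$ by the second hypothesis. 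In either case the generator preserves $\ah_{\ap,\beta}$.

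There are no serious obstacles; the only degenerate possibility is $\beta=\ap$, in which case the first hypothesis reads $\g_\ap(2)=\{0\}$, i.e.\ $d_\ap=1$, so $\ah_{\ap,\ap}=\g_\ap(1)$ is already abelian and is a $\g_\ap(0)$-module. The cleanest presentation will state the whole argument once using the bigraded decomposition, which keeps it uniform and makes the role of each hypothesis transparent.
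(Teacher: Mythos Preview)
Your proof is correct. The paper itself omits the argument entirely, stating only ``The proof is straightforward and left to the reader,'' so your $\BZ^2$-grading bookkeeping is exactly the kind of direct verification the author had in mind; the reduction to simple-root generators for the $\be_\ap(0)$-stability step is a clean way to avoid tracking the full $\beta$-height of arbitrary elements of $\Delta^+_\ap(0)$.
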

The proof  is straightforward and left to the reader. 

\noindent
There are interesting instances of such phenomenon and we provide below some illustrations 
to our method. It turns out {\sl a posteriori} that the two assumptions of the above proposition imply
that $d_\beta<d_\ap$. However, even if Proposition~\ref{prop:d_ap-&-d_beta} applies, then the
abelian subspace $\ah_{\ap,\beta}$ does not necessarily have the maximal dimension.

\begin{rmk}      
\label{ex:instances}
{\sf (i)} \ Note that $\g_\ap(1)\cap\g_\beta(1)\ne\{0\}$ for {\bf all} pairs 
$\{\ap,\beta\}\subset\Pi$. For, take the unique chain in the Dynkin diagram joining $\ap$ 
and $\beta$. The sum of simple roots in this chain is a root, denoted by $\mu_{\ap,\beta}$, and 
it is clear that $\mu_{\ap,\beta}\in \Delta_\ap(1)\cap\Delta_\beta(1)$. Clearly,  
$\h=\g_\ap(0)\cap\g_\beta(0)$ is a Levi subalgebra in $\p_\ap\cap \p_\beta$ and the set of simple roots of $\h$ is $\Pi\setminus\{\ap,\beta\}$.
By~\cite[Theorem\,0.1]{ko10} (cf. Section~\ref{subs:Z-grad-versus}), 
$\g_\ap(i)\cap\g_\beta(j)$ is a simple $\h$-module
for any $(i,j)$. Obviously, $\mu_{\ap,\beta}$ is the lowest weight of the $\h$-module $\ah_{\ap,\beta}$, so it is an easy task to compute $\dim\ah_{\ap,\beta}$ for any pair $\{\ap,\beta\}\subset\Pi$.
\\ \indent
{\sf (ii)} \ if $d_\beta=1$, then $\g_\beta(1)$ is a maximal abelian $\be$-ideal and the assumptions of
Proposition~\ref{prop:d_ap-&-d_beta} are satisfied. 
\\ \indent
{\sf (iii)} \ Another possibility for applying Proposition~\ref{prop:d_ap-&-d_beta} is that in which 
$d_\ap\ge 3$ (hence $\g$ is exceptional) and 
$\beta=\widehat\ap$ is the unique simple root such that $(\theta,\widehat\ap)\ne 0$. Then 
$\Delta_{\widehat\ap}(2)=\{\theta\}$, while $\hot_\ap(\theta)\ge 3$. Hence
$\Delta_{\widehat\ap}(2)\cap(\Delta_\ap(1)\cup\Delta_\ap(2))=\varnothing$.
\end{rmk}

\begin{ex}    \label{ex:illustr}
1) Let $\theta$ be a multiple of a fundamental weight (i.e., $\Delta$ is not of type $\GR{A}{n}$, 
$n\ge 2$) and, as usual, $(\theta,\widehat\ap)\ne 0$. For the
$(\BZ,\widehat\ap)$-grading, one has $d_{\widehat\ap}=2$,
$\g_{\widehat\ap}(2)=\g^\theta$, and $\g_{\widehat\ap}({\ge}1)$ is a Heisenberg Lie algebra. Here 
$\dim\g_{\widehat\ap}(1)=2h^*-4$ and it follows from~\cite[Sect.\,3]{jems} that, for any maximal
abelian $\be$-ideal $\eus I$, we have $\dim\bigl(\eus I\cap\g_{\widehat\ap}({\ge}1)\bigr)=h^*-1$. Hence
$\dim\bigl(\eus I\cap\g_{\widehat\ap}(1)\bigr)=h^*-2=(1/2)\dim\g_{\widehat\ap}(1)$. Thus, 
$\g_{\widehat\ap}(1)\cap\eus I$ is an abelian $\be_{\widehat\ap}(0)$-stable subspace of $\g_\ap(1)$
of half-dimension for {\bf any} maximal abelian ideal $\eus I$. Actually, different $\eus I$'s yield
different subspaces $\g_{\widehat\ap}(1)\cap\eus I$.
\\ \indent
2) If $\g$ is exceptional, then $\widehat\ap$ is an {\bf extreme} root in the Dynkin diagram. Let $\ap\in\Pi$ be the unique root adjacent to $\widehat\ap$. Then 
$1=(\theta,{\widehat\ap}^\vee)=d_{\widehat\ap}(\widehat\ap,{\widehat\ap}^\vee)+d_\ap(\ap,{\widehat\ap}^\vee)=4-d_\ap$. 
Hence $d_\ap=3$  and therefore $\ah_{\ap,\widehat\ap}$ is a $\be_\ap(0)$-stable abelian subspace of 
$\g_\ap(1)$, cf. Proposition~\ref{prop:d_ap-&-d_beta} and Remark~\ref{ex:instances}(iii). We claim that 
$(\g,\ap)$ is a good case.
For, in this case, $\g_\ap(0)'=\tri\dotplus\q$, where $\tri$ corresponds to $\widehat\ap$ and the simple
roots of the semisimple algebra $\q$ are $\Pi\setminus\{\ap,\widehat\ap\}$. Here $\g_\ap(1)\simeq \bbk^2\otimes V$ as 
$\g_\ap(0)'$-module, where 
$\bbk^2$ is the standard $\tri$-module and $V$ is a $\q$-module. Therefore, if 
$\gamma\in\Delta_\ap(1)$, then $\hot_{\widehat\ap}(\gamma)\in\{0,1\}$; and if
$\hot_{\widehat\ap}(\gamma)=0$, then $\gamma+\widehat\ap\in \Delta_\ap(1)$, and vice versa.
It follows that  $\Delta_\ap(1)\cap \Delta_{\widehat\ap}(1)=
\{\gamma\in\Delta_\ap(1)\mid \hot_{\widehat\ap}(\gamma)=1\}$ contains exactly half of the roots in 
$\Delta_\ap(1)$. Thus, $\dim\ah_{\ap,\widehat\ap}=\frac{1}{2}\dim\g_\ap(1)$.
\\ \indent
3) For $\GR{E}{n}$, one verifies that if $\beta\in\Pi$ is {\bf any} extreme root of the Dynkin diagram and 
$\ap$ is the unique root adjacent to $\beta$, then $\ah_{\ap,\beta}$ is abelian and 
$\dim\ah_{\ap,\beta}=\frac{1}{2}\dim\g_\ap(1)$. The last equality is again explained by the fact that here  
$\g_{\ap}(0)'\simeq\tri\dotplus\q$ and $\g_\ap(1)\simeq \bbk^2\otimes V$.
\end{ex}

\begin{ex}   \label{F4-ap1}
1) For $(\GR{F}{4},\ap_1)$, we have $d_{\ap_1}=2$, $\dim\g_{\ap_1}(1)=8$, and $\dim\g_{\ap_1}(2)=7$. 
If $\eus I$ is an abelian $\be$-ideal, then $\dim \eus I\le 9$. Hence $\dim\ah\le 9-7=2$. Actually, 
$\dim(\eus I\cap\g_{\ap_1}(1))=2$, if $\dim\eus I=9$.
\\ \indent
2) For $(\GR{E}{6},\ap_2)$, we have $d_{\ap_2}=2$ and $\dim\g_{\ap_2}(1)=20$. Here $d_{\ap_1}=1$ 
and hence $\g_{\ap_1}(1)$ is a (maximal) abelian $\be$-ideal. Since 
$\dim (\g_{\ap_1}(1)\cap\g_{\ap_2}(1))=10$, this is a good case.
\\ \indent
3) For $(\GR{E}{7},\ap_7)$, we have $d_{\ap_7}=2$, $\dim\g_{\ap_7}(1)=35$, and $\dim\g_{\ap_7}(2)=7$. 
Here $d_{\ap_1}=1$ and $\g_{\ap_1}(1)$ is the maximal abelian ideal of maximal dimension $27$. In this 
case, $\Pi\setminus \{\ap_1,\ap_7\}$ is the Dynkin diagram of type $\GR{A}{5}$ and
$\g_{\ap_7}(1)\cap\g_{\ap_1}(1)$ is the simple $SL_6$-module $\bigwedge^2(\bbk^6)$, of dimension 15.
The minimal (resp. maximal) root in $\Delta_{\ap_7}(1)\cap\Delta_{\ap_1}(1)$ is
\raisebox{-1.7ex}{\begin{tikzpicture}[scale= .7, transform shape]
\node (a) at (0,0) {\bf 1};
\node (b) at (.3,0) {\bf 1};
\node (c) at (.6,0) {\bf 1};
\node (d) at (.9,0) {\bf 1};
\node (e) at (1.2,0) {\bf 0};
\node (f) at (1.5,0) {\bf 0};
\node (g) at (.9,-.5) {\bf 1};
\end{tikzpicture}} (resp. 
\raisebox{-1.7ex}{\begin{tikzpicture}[scale= .7, transform shape]
\node (a) at (0,0) {\bf 1};
\node (b) at (.3,0) {\bf 2};
\node (c) at (.6,0) {\bf 3};
\node (d) at (.9,0) {\bf 3};
\node (e) at (1.2,0) {\bf 2};
\node (f) at (1.5,0) {\bf 1};
\node (g) at (.9,-.5) {\bf 1};
\end{tikzpicture}}\!\!). 
For the other maximal abelian ideals $\eus I$, one obtains 
$\dim(\g_{\ap_1}(1)\cap \eus I)\le 15$. 
\end{ex}

\begin{rmk}   \label{rem:vsegda-if-d>3}
If $\g$ is exceptional and $d_\ap\ge 3$, then one can always find $\beta\in\Pi$ such that $d_\beta<d_\ap$,  Proposition~\ref{prop:d_ap-&-d_beta} applies, and $\ah_{\ap,\beta}$ has the required dimension,
i.e., $(1/2)\dim\g_\ap(1)$ in the good cases and the numbers $\dim\ah_{\sf max}$ from 
Table~\ref{table:bad-cases} in the bad cases. 
For instance, one takes 

\textbullet\ 
for $\GR{E}{6}$: \ $\beta=\ap_6$ if $\ap=\ap_3$;

\textbullet\ 
for $\GR{E}{7}$: \ $\beta=\ap_6$ if $\ap=\ap_3$ or $\ap_5$;  
$\beta=\ap_7$ if $\ap=\ap_4$; 

\textbullet\ 
for $\GR{E}{8}$: \ $\beta=\ap_1$ if $\ap=\ap_2$ or $\ap_3$ or $\ap_8$; 
$\beta=\ap_7$ if $\ap=\ap_4$ or $\ap_6$; 
$\beta=\ap_8$ if $\ap=\ap_5$;

\textbullet\ 
for $\GR{F}{4}$: \ $\beta=\ap_4$ if $\ap=\ap_2$ or $\ap_3$.
\end{rmk}

\section{Variations on themes of the "strange formula"}
\label{sect:FdV}

\noindent
Let $\g$ be a reductive algebraic Lie algebra.
For any orthogonal $\g$-module $\eus V$, there is another $\g$-module, denoted
$\spin(\eus V)$. Roughly speaking, one takes the spinor representation of $\sov$ and restricts it to 
$\g\subset\sov$.
It has the property that 
\[  \textstyle
\bigwedge^\bullet \eus V\simeq \begin{cases}
\spin(\eus V)\otimes\spin(\eus V), & \text{ if $\dim\eus V$ is even} \\
2(\spin(\eus V)\otimes\spin(\eus V)), & \text{ if $\dim\eus V$ is odd}   \end{cases} \ ,
\] 
see~\cite[Section\,2]{tg01}.  Moreover, extracting further a numerical  factor from the $\g$-module 
$\spin(\eus V)$, one 
can uniformly write $\spin(\eus V)=2^{[m(0)/2]}\spin_0(\eus V)$ and then  \\[.4ex]
\centerline{
$\bigwedge^\bullet \eus V\simeq 2^{m(0)}{\cdot}\bigl(\spin_0(\eus V)\otimes\spin_0(\eus V)\bigr)$,}
\\[.8ex]
where $m(0)$ is the multiplicity of the zero weight in $\eus V$. There are only few orthogonal simple 
$\g$-modules
$\eus V$ such that $\spin_0(\eus V)$ is again simple, see~\cite[Section\,3]{tg01}. A notable example is 
that $\spin_0(\g)=\eus V_\rho$ for any simple Lie algebra $\g$, cf. Introduction.

From now on, $\g$ is again a simple Lie algebra. Let $\g=\g_0\oplus\g_1$ be a $\BZ_2$-grading. 
The corresponding involution of $\g$ is denoted by $\sigma$. Write 
$\eus C_0\in \eus U(\g_0)$ for the Casimir element associated with $\Phi\vert_{\g_0}$.
Then the $\eus C_0$-eigenvalue on $\g_1$ equals $1/2$, see~\cite{jlms01} and Proposition~\ref{prop:lms01}.

The $\g_0$-module $\g_1$ is orthogonal, and we are interested now in the $\spin$-construction for 
$\eus V=\g_1$. Then $m(0)=\rk\g-\rk\g_0$, and $m(0)=0$ if and only if $\sigma$ is an inner involution. 
Hence $\spin_0(\g_1)=\spin(\g_1)$ whenever $\sigma$ is inner. There is an explicit description of the 
irreducible constituents of $\spin_0(\g_1)$ in~\cite[Sections 5,\,6]{tg01}. This also implies that 
$\spin_0(\g_1)$ is always reducible if $\sigma$ is inner.
Although $\spin_0(\g_1)$ can be highly reducible, it is proved in~\cite[Theorem\,7.7]{tg01} that
$\eus C_0$ acts scalarly on $\spin_0(\g_1)$ and the corresponding eigenvalue is 
\[
    \gamma_{\spin_0(\g_1)}=(\rho,\rho)-(\rho_0,\rho_0), 
\]
where $\rho_0$ is the half-sum of positive roots of 
$\g_0$. Of course, we adjust here the Cartan subalgebras, $\te_0\subset \g_0$  and $\te\subset \g$ such that $\te_0\subset\te$. Then we can assume that $\te^*_0\subset \te^*$, etc.
In this section, we show that the $\eus C_0$-eigenvalue $\gamma_{\spin_0(\g_1)}$ has another nice uniform expression and that this is related to the ``strange formula of Freudenthal--de Vries''  (={\sf\bfseries sfFdV}).

\begin{thm}    \label{thm:FdV-inner}
Let $\sigma$ be an inner involution of $\g$ and $\g=\g_0\oplus\g_1$  the corresponding
$\BZ_2$-grading. Then
\[
    \gamma_{\spin_0(\g_1)}=(\dim\g_1)/16 .
\] 
\end{thm}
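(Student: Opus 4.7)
The plan is to exploit the correspondence between inner involutions and $(\BZ,\ap)$-gradings with $d_\ap\le 2$ (Kac's theorem), and then to reduce the claimed identity $\gamma_{\spin_0(\g_1)}=(\dim\g_1)/16$ to the formula $\gamma_{\spin_0(\g_1)}=(\rho,\rho)-(\rho_0,\rho_0)$ cited from \cite{tg01} together with the numerology of Sections~\ref{sect:some-prop}--\ref{sect:3}. Since $\sigma$ is inner, we have $\rk\g=\rk\g_0$ and $\spin_0(\g_1)=\spin(\g_1)$, so a single Cartan subalgebra $\te\subset\g_0\subset\g$ serves both algebras and the decomposition of $\Delta$ is directly accessible.

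By Kac's classification, two cases arise. First, if $d_\ap=1$ (Hermitian symmetric case), then $\g_0=\g_\ap(0)$ and $\g_1=\g_\ap(-1)\oplus\g_\ap(1)$. I would compute $2\rho-2\rho_0=q_\ap\vp_\ap$ using Lemma~\ref{prop:edge-labels}, so that
\[
   4\bigl((\rho,\rho)-(\rho_0,\rho_0)\bigr)=(2\rho-2\rho_0,\,2\rho+2\rho_0)=q_\ap^2(\vp_\ap,\vp_\ap),
\]
because $\vp_\ap\perp\Delta^+_\ap(0)$. Next I invoke Corollary~\ref{cor:delta-top-ab}(i) (cominuscule case): $(\vp_\ap,\vp_\ap)=m/(2(h^*)^2)$ and $q_\ap=h^*$, where $m=\dim\g_\ap(1)$. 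Substitution gives $(\rho,\rho)-(\rho_0,\rho_0)=m/8=\dim\g_1/16$.

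Second, if $d_\ap=2$, then $\g_0=\g_\ap(-2)\oplus\g_\ap(0)\oplus\g_\ap(2)$ and $\g_1=\g_\ap(-1)\oplus\g_\ap(1)$. Here I take the Borel $\be_0=\be_\ap(0)\oplus\g_\ap(2)$ of $\g_0$, yielding $2\rho_0=2\rho_\ap(0)+q_\ap(2)\vp_\ap$. Since $2\rho=2\rho_\ap(0)+q_\ap\vp_\ap$ and $q_\ap=q_\ap(1)+q_\ap(2)$, one obtains $2\rho-2\rho_0=q_\ap(1)\vp_\ap$ and $2\rho+2\rho_0=4\rho_\ap(0)+(q_\ap(1)+2q_\ap(2))\vp_\ap$, so
\[
   4\bigl((\rho,\rho)-(\rho_0,\rho_0)\bigr)=q_\ap(1)\bigl(q_\ap(1)+2q_\ap(2)\bigr)(\vp_\ap,\vp_\ap).
\]
The key simplification comes from Corollary~\ref{cor:sravnenie-2}: for $d_\ap=2$, $q_\ap+q_\ap(2)=h^*r_\ap$, hence $q_\ap(1)+2q_\ap(2)=h^*r_\ap$. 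Combining this with $(\vp_\ap,\vp_\ap)=c_{\ap\ap}/(2h^*r_\ap)$ and Proposition~\ref{prop:hot_ap-etc} (namely $q_\ap(1)c_{\ap\ap}=\dim\g_\ap(1)$) yields $4\bigl((\rho,\rho)-(\rho_0,\rho_0)\bigr)=\dim\g_\ap(1)/2$, i.e., $(\rho,\rho)-(\rho_0,\rho_0)=\dim\g_\ap(1)/8=\dim\g_1/16$.

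The main obstacle is the bookkeeping in the $d_\ap=2$ case: one must verify that $\be_\ap(0)\oplus\g_\ap(2)$ is indeed a Borel of the reductive (in fact semisimple or with a $1$-dimensional centre absorbed into $\te$) algebra $\g_0$, so that $2\rho_0$ really is the sum of positive roots of $(\g_0,\te)$ relative to this choice. The rest is a clean manipulation linking $q_\ap$, $q_\ap(i)$, $c_{\ap\ap}$, and $(\vp_\ap,\vp_\ap)$ via the identities already established in the paper; no case-by-case check is needed, and the two branches $d_\ap\in\{1,2\}$ can even be unified by noting that both reduce to $(\rho-\rho_0,\rho+\rho_0)$ being a scalar multiple of $(\vp_\ap,\vp_\ap)$ determined by $q_\ap(1)$.
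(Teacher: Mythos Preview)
Your proposal is correct and follows essentially the same route as the paper: both use Kac's correspondence between inner involutions and $(\BZ,\ap)$-gradings with $d_\ap\le 2$, write $\rho-\rho_0=\tfrac12 q_\ap(1)\vp_\ap$, arrive at $(\rho,\rho)-(\rho_0,\rho_0)=\tfrac14 q_\ap(1)(q_\ap(1)+2q_\ap(2))(\vp_\ap,\vp_\ap)$, and invoke Corollary~\ref{cor:sravnenie-2} to replace $q_\ap(1)+2q_\ap(2)$ by $h^*r_\ap$. The only cosmetic differences are that the paper treats $d_\ap=1$ and $d_\ap=2$ in one stroke (setting $q_\ap(2)=0$ when $d_\ap=1$) and finishes via the elementary identity $(|\Delta_\ap(1)|,\vp_\ap)=\dim\g_\ap(1)\,(\ap,\ap)/2$, whereas you split the two cases and close with $(\vp_\ap,\vp_\ap)=c_{\ap\ap}/(2h^*r_\ap)$ together with Proposition~\ref{prop:hot_ap-etc}; your worry about $\be_\ap(0)\oplus\g_\ap(2)$ being a Borel of $\g_0$ is harmless, since $\Delta^+_\ap(0)\cup\Delta_\ap(2)$ is visibly a positive system in the root system of $\g^{[2]}=\g_0$ (Lemma~\ref{lm:g^k-ss}).
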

\begin{proof} Our argument relies on the theory developed in Section~\ref{sect:5/2}
and a relationship between involutions (=\, $\BZ_2$-gradings) and
$(\BZ{,}\ap)$-gradings of height at most $2$.
\\ \indent
Suppose that $d_\ap=\hot_\ap(\theta)\le 2$ and let
$\g=\bigoplus_{i=-d}^d\g_\ap(i)$ be the corresponding $\BZ$-grading. Letting $\g_0=\g_\ap(-2)\oplus \g_\ap(0)\oplus\g_\ap(2)$
and $\g_1=\g_\ap(-1)\oplus\g_\ap(1)$, we obtain a $\BZ_2$-grading (obvious). Since $\rk\g=\rk\g_0$, 
this involution is inner. The point is that {\bf all inner} involutions of $\g$ are obtained in this way, as 
follows from Kac's description in~\cite{kac}, cf. also~\cite[Ch.\,3,\ \S3.7]{t41}. 
Different simple 
roots $\ap,\beta$ with $d_\ap=d_\beta=2$ lead to ``one and the same'' $\BZ_2$-grading if and only if there is an automorphism of the extended Dynkin diagram of $\g$ that takes 
$\ap$ to $\beta$. 
If $d_\ap=1$, then $\g_0=\g_\ap(0)$ is not semisimple, whereas $\g_0$ is semisimple for $d_\ap=2$. 
The  subalgebras $\g_0$ corresponding to $\ap$ with $d_\ap=2$ are indicated in Tables~\ref{table:d=2c}
and \ref{table:d=2e}.

We can express $\rho_0$ and $\rho_1=\rho-\rho_0$ via the data related to the $\BZ$-grading.
That is, $\rho_0=\frac{1}{2}\bigl(|\Delta_\ap^+(0)|+|\Delta_\ap(2)|\bigr)$ and 
$\rho_1=\frac{1}{2}|\Delta_\ap(1)|=\frac{1}{2}q_\ap(1)\vp_\ap$. Since $(\vp_\ap,\mu)=0$ for any
$\mu\in \Delta^+_\ap(0)$, we have $(\rho_1,|\Delta_\ap^+(0)|)=0$ and therefore
\[
    (\rho,\rho)-(\rho_0,\rho_0)=(\rho_1,2\rho_0+\rho_1)=
    (\rho_1, |\Delta_\ap(2)|+\rho_1)=
    \frac{1}{4}\bigl(q_\ap(1)\vp_\ap, (q_\ap(1)+2q_\ap(2))\vp_\ap\bigr).
\]
Now, $q_\ap(1)+2q_\ap(2)=q_\ap+q_\ap(2)=h^*r_\ap$, see Corollary~\ref{cor:sravnenie-2} with $d_\ap=2$.
For $d_\ap=1$, one has $r_\ap=1$ and again $q_\ap(1)+2q_\ap(2)=q_\ap=h^*$, see Theorem~\ref{thm:otmetki}(2$^o$). So, if $d_\ap\le 2$, then
\begin{multline*}
  \gamma_{\spin_0(\g_1)}=\frac{h^*r_\ap}{4}(|\Delta_\ap(1)|, \vp_\ap)=
  \frac{h^*r_\ap}{4}\sum_{\mu\in\Delta_\ap(1)}(\mu,\vp_\ap)=\frac{h^*r_\ap}{4}\sum_{\mu\in\Delta_\ap(1)}
  (\ap,\vp_\ap) \\
  =\frac{h^*r_\ap}{4}\dim\g_\ap(1)\frac{(\ap,\ap)}{2}=\frac{1}{8}\dim\g_\ap(1)=\frac{1}{16}\dim\g_1 .
\end{multline*}
Here we use the fact that $\hot_\ap(\mu)=1$ for any $\mu\in \Delta_\ap(1)$ and hence
$(\mu,\vp_\ap)=(\ap,\vp_\ap)$.
\end{proof}

Actually, the previous result holds for any involution of $\g$, see below. This can be regarded as both an 
application and generalisation of the {\sf\bfseries sfFdV}. 
However, whereas the proof of Theorem~\ref{thm:FdV-inner} does not refer to the {\sf\bfseries sfFdV}, the 
general argument below, which applies to arbitrary involutions, explicitly relies on the {\sf\bfseries sfFdV}. 

\begin{thm}      \label{thm:general-g_1}
For any involution of a simple Lie algebra $\g$, we have $\gamma_{\spin_0(\g_1)}=(\dim\g_1)/{16}$.
\end{thm}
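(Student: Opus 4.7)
The plan is to combine the formula $\gamma_{\spin_0(\g_1)} = (\rho,\rho)-(\rho_0,\rho_0)$ from \cite[Theorem~7.7]{tg01} with the \textbf{sfFdV}, applied both to $\g$ and, componentwise, to each simple ideal of $\g_0$. The bridging identity I aim to establish is
\[
(\rho_0,\rho_0) = \frac{1}{24}\,\tr_{\g_0}(\eus C_0),
\]
after which the result falls out by a direct substitution using Proposition~\ref{prop:lms01}.

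First I would decompose $\g_0 = \z \oplus \bigoplus_{i} \ka_i$, with $\z = \z(\g_0)$ and $\ka_i$ the simple ideals. Since each $\ka_i$ is perfect and $\ka_j \subset \g^{\ka_i}$ for $j\ne i$, this decomposition is $\Phi$-orthogonal, so $\rho_0 = \sum_i \rho_{\ka_i}$ and $(\rho_0,\rho_0) = \sum_i (\rho_{\ka_i},\rho_{\ka_i})$. By Proposition~\ref{prop:transition}, the form $(\cdot,\cdot)$ on $\te_{\ka_i}^*\subset\te^*$ equals $T_i (\cdot,\cdot)_{\ka_i}$ with $T_i = h^*(\ka_i)/(h^*\cdot\ind(\ka_i\hookrightarrow\g))$, so applying \textbf{sfFdV} inside $\ka_i$ yields $(\rho_{\ka_i},\rho_{\ka_i}) = T_i\dim\ka_i/24$. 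Crucially, the same factor $T_i$ also arises as the eigenvalue of $\eus C_{\ka_i}^{(0)}$ on the adjoint representation of $\ka_i$: since $\Phi|_{\ka_i} = (1/T_i)\Phi_{\ka_i}$, passing to the dual bases gives $\eus C_{\ka_i}^{(0)} = T_i\tilde{\eus C}_{\ka_i}$, and $\tilde{\eus C}_{\ka_i}$ acts as $1$ on $\ka_i$ (cf.\ Section~\ref{sect:prelim-Cas}). Writing $\eus C_0 = \eus C_\z + \sum_i\eus C_{\ka_i}^{(0)}$ (compatible with the $\Phi$-orthogonal splitting) and observing that $\z$ and the other $\ka_j$ are trivial $\ka_i$-modules, I obtain $\tr_{\g_0}(\eus C_0) = \sum_i T_i\dim\ka_i$, and the displayed identity follows.

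To finish, Proposition~\ref{prop:lms01}(iv) says $\eus C_0$ acts on $\g_1$ as the scalar $1/2$, so combined with Proposition~\ref{prop:lms01}(i), which gives $\tr_\g(\eus C_0)=\dim\g_0$, one gets $\tr_{\g_0}(\eus C_0) = \dim\g_0 - \dim\g_1/2$. Substituting into the bridging identity and invoking \textbf{sfFdV} for $\g$,
\[
\gamma_{\spin_0(\g_1)} = (\rho,\rho)-(\rho_0,\rho_0) = \frac{\dim\g - \dim\g_0 + \dim\g_1/2}{24} = \frac{(3/2)\dim\g_1}{24} = \frac{\dim\g_1}{16}.
\]
The main obstacle, and the real content of the argument, is the simultaneous application of \textbf{sfFdV} to $\g$ and to each simple ideal of $\g_0$ through appropriately scaled bilinear forms; the dual role of $T_i$ — as both a form ratio and a Casimir eigenvalue on the adjoint — is what makes the two ingredients fit together and causes the transition factors to cancel in the final tally.
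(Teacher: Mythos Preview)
Your proof is correct and follows essentially the same approach as the paper: decompose $\g_0$ into its centre and simple ideals, use the transition factors $T_i$ together with the \textsf{\bfseries sfFdV} for each simple ideal to compute $(\rho_0,\rho_0)$, and combine with Proposition~\ref{prop:lms01}(i),(iv) and the \textsf{\bfseries sfFdV} for $\g$. Your presentation makes the ``bridging identity'' $(\rho_0,\rho_0)=\frac{1}{24}\tr_{\g_0}(\eus C_0)$ explicit and highlights the dual role of $T_i$, but the ingredients and the flow of the argument match the paper's proof.
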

\begin{proof}
Write $\g_0=(\bigoplus_i\h_i)\oplus \ce$ as the sum of simple ideals $\{\h_i\}$ and possible centre $\ce$.
To prove the assertion, we need basically the following three facts on $\eus C_0=\eus C_{\g_0}$:
\begin{itemize}
\item $\tr_\g(\eus C_0)=\dim\g_0$, see~\cite{jlms01} and  Proposition~\ref{prop:lms01}{\sf (i)};
\item  the $\eus C_0$-eigenvalue on $\g_1$ equals $1/2$, see  Proposition~\ref{prop:lms01}{\sf (iv)};
\item  the usual {\sf\bfseries sfFdV} for $\g$ and for the simple ideals of $\g_0$.
\end{itemize}
Let $\eus C_{\h_i}$ be the "canonical" Casimir element for $\h_i$. Then $\eus C_{\h_i}$ has the eigenvalue $1$ on $\h_i$. Since $\h_i$ is an ideal of $\g_0$,
there is a transition factor, $T_i$, between the eigenvalues of
$\eus C_{\h_i}$ and $\eus C_0$ on the $\h_i$-modules, cf. the proof of Theorem~\ref{thm:gen-formula}.
Actually, we even know that $T_i=\frac{h^*(\h_i)}{h^*{\cdot}\ind(\h_i\hookrightarrow\g)}$, but this precise 
value is of no importance in the rest of the argument.
Since the $\eus C_0$-eigenvalue on $\h_i$ is $T_i$, we have
$\tr_{\h_i}{\eus C_0}=T_i {\cdot}\dim\h_i$.  Hence
\[
   \dim\g_0= \tr_\g(\eus C_0)=\tr_{\g_1}(\eus C_0)+\tr_{\g_0}(\eus C_0)=
   \frac{1}{2}\dim\g_1+\textstyle \sum_i T_i {\cdot}\dim\h_i .
\]
On the other hand, let $\rho_i$ be the half-sum of the positive roots of $\h_i$.
Then $\rho_0=\sum_i\rho_i$, $(\rho_i,\rho_j)=0$ for $i\ne j$, and  
$(\rho_i,\rho_i)=T_i{\cdot} (\dim\h_i/24)$ in view of the {\sf\bfseries sfFdV} for $\h_i$.
Thus,
\begin{multline*}
 \gamma_{\spin_0(\g_1)}=(\rho,\rho)-(\rho_0,\rho_0)=
 \frac{1}{24}\dim\g- \frac{1}{24}(\textstyle \sum_i T_i\dim\h_i)   \displaystyle
  \\  = \frac{1}{24}\dim\g-  \frac{1}{24} (\dim\g_0-\frac{1}{2}\dim\g_1)=\frac{1}{16}\dim\g_1 .
  \qedhere
\end{multline*}
\end{proof}

\begin{rmk}      \label{rem:g+g}
The adjoint representation of $\g$ can be regarded as the isotropy representation related to the
permutation, $\tau$, of the summands in $\tilde\g=\g\dotplus\g$. Here $\tilde\g$ is not simple, 
but $\tilde\g_0\simeq \g$ is. In this situation, there is an analogue of Theorem~\ref{thm:general-g_1} for
$(\tilde\g, \tau)$, and we demonstrate below that it is equivalent
to the {\sf\bfseries sfFdV} for $\g$.  In other words, under proper adjustments of bilinear forms and
$\eus C_{\tilde\g_0}$,  
the formula of Theorem~\ref{thm:general-g_1} for $(\tilde\g,\tau)$ transforms  into 
the {\sf\bfseries sfFdV} for $\g$, and vice versa. 
One of the main reasons is that, for the orthogonal
$\g$-module $\g$, one has $\spin_0(\g)=\eus V_\rho$, see~\cite[(5.9)]{ko61} and~\cite[(2.5)]{tg01}. 
\\ \indent
Recall that $\g=\ut\oplus\te\oplus\ut^-$ and $\be=\ut\oplus\te$. Then
$\tilde\be=\be\dotplus\be$ and $\tilde\te=\te\dotplus\te$.
In what follows, various objects related to the two factors of $\tilde\g$ will be marked with the  
superscripts `$(1)$' and `$(2)$'. As above,  $\Phi$ is the Killing form on $\g$ and $(\ ,\ )$ is the induced 
(canonical) bilinear form on $\te^*$.
Let $\tilde\Phi=\Phi^{(1)}\dotplus\Phi^{(2)}$ be the invariant bilinear form on $\tilde\g$. The induced
bilinear form on $\tilde\te^*$ is denoted by $(\ ,\ )^{\sim}$.  Then the Casimir element 
$\eus C_{\tilde\g_0}$ is defined via the restriction of $\tilde\Phi$ to $\tilde\g_0$. We have 
$\tilde\rho=\rho^{(1)}+\rho^{(2)}$ and these two summands are orthogonal w.r.t. $(\ ,\ )^{\sim}$; \ 
hence 
$(\tilde\rho ,\tilde\rho )^{\sim}=(\rho^{(1)} ,\rho^{(1)})^{\sim}+
(\rho^{(2)} ,\rho^{(2)})^{\sim}=2(\rho,\rho)$. 
The Cartan subalgebra $\tilde\te_0$
of $\tilde\g_0$ is diagonally imbedded in $\tilde\te$, hence so are the roots of $\tilde\g_0$ in 
$\tilde\te_0^*$. In particular, both components of $\tilde\rho_0$ are equal to $\rho$. 
Identifying ${\tilde\g_0}$ with $\g$ via the projection to the first component, one readily obtains that
$\tilde\Phi\vert_{\tilde\g_0}=2\Phi$. It then follows from Lemma~\ref{lm:dve-formy} that one obtains 
the relation with the {\bf inverse} coefficient for the corresponding canonical bilinear forms.
This yields our key equality
\[
    (\tilde\rho_0 ,\tilde\rho_0 )^{\sim}=\frac{1}{2}(\rho,\rho) .
\]
Afterwards, using the {\sf\bfseries sfFdV} for $\g$, we obtain
\[
    (\tilde\rho ,\tilde\rho )^{\sim}-(\tilde\rho_0 ,\tilde\rho_0 )^{\sim}=2(\rho,\rho)-\frac{1}{2}(\rho,\rho)=
    \frac{3}{2}(\rho,\rho)=\frac{1}{16}\dim\g=\frac{1}{16}\dim\tilde\g_1 .
\]
And by \cite[Theorem\,7.7]{tg01}, the $\eus C_{\tilde\g_0}$-eigenvalue on $\spin_0(\tilde\g_1)$ equals
$(\tilde\rho ,\tilde\rho )^{\sim}-(\tilde\rho_0 ,\tilde\rho_0 )^{\sim}$. Thus, the equality of 
Theorem~\ref{thm:general-g_1} remains true for the involution $\tau$ of $\tilde\g$, and 
we have just shown that this equality is equivalent to the {\sf\bfseries sfFdV}.
\end{rmk}
This certainly means that it is of great interest to find a proof of Theorem~\ref{thm:general-g_1} and 
its analogue for the semisimple Lie algebra $\tilde\g=\g\dotplus\g$ that is independent of the
{\sf\bfseries sfFdV}.

\appendix
\section{The eigenvalues $\gamma_\ap(i)$ for $\ap\in\Pi$ with $d_\ap\ge 2$}   
\label{sect:App}

\noindent
Tables~\ref{table:d=2c}--\ref{table:d>5}  below 
provide the eigenvalues $\{\gamma_\ap(i)\}$ and integers $\{q_\ap(i)\}$ for the $(\BZ{,}\ap)$-gradings 
of all simple Lie algebras with $d_\ap\ge 2$. For, if $d_\ap=1$, then $\gamma_\ap(1)=1/2$ and
$q_\ap=q_\ap(1)=h^*$.
In the last column of Tables~\ref{table:d=2c}
and \ref{table:d=2e}, we point out the fixed-point (semisimple) subalgebra $\g_0$ for the
corresponding inner involution of $\g$,  cf. Section~\ref{sect:FdV}.

\begin{table}[ht]
\caption{Classical Lie algebras, $d_\ap=2$}   \label{table:d=2c}
\begin{center}
\begin{tabular}{>{$}c<{$} |>{$}c<{$} >{$}c<{$}| >{$}c<{$} >{$}c<{$}| >{$}c<{$} >{$}c<{$}  |>{$}c<{$} c }
\g,\ap & \gamma_\ap(1) & \gamma_\ap(2)  & q_\ap(1) & q_\ap(2) & h^* & r_\ap &  \g_0 & 
  \\ \hline\hline
\GR{B}{n},\ap_i  \rule{0pt}{3.8ex} & \displaystyle\frac{2n{-}i}{2(2n{-}1)} & \displaystyle\frac{i{-}1}{2n{-}1} & 2n{-}2i{+}1 & i{-}1 & 2n{-}1 & 1 &  \GR{D}{i}{\times}\GR{B}{n-i} & $2{\le} i{\le} n{-}1$ \\  
\GR{B}{n},\ap_n \rule{0pt}{3.8ex} & \displaystyle\frac{n}{2(2n{-}1)} & \displaystyle\frac{2n{-}2}{2(2n{-}1)} & 2 & 2n{-}2 & 2n{-}1 & 2 &  \GR{D}{n} & \\  
\GR{C}{n},\ap_i \rule{0pt}{3.8ex} & 
\displaystyle\frac{2n{+}1{-}i}{4(n{+}1)} & \displaystyle\frac{i{+}1}{2(n{+}1)} & 2n{-}2i & i{+}1 & n{+}1 & 2 &  \GR{C}{i}{\times}\GR{C}{n-i} & $1{\le} i{\le} n{-}1$ \\  
\GR{D}{n},\ap_i \rule{0pt}{3.8ex} & \displaystyle\frac{2n{-}1{-}i}{2(2n{-}2)} & 
\displaystyle\frac{i{-}1}{2n{-}2} & 2n{-}2i & i{-}1 & 2n{-}2 & 1 &  \GR{D}{i}{\times}\GR{D}{n-i}  
& $2{\le} i{\le} n{-}2$ \\  
\hline 
\end{tabular}
\end{center}
\end{table}

Recall that the numbering of simple roots follows \cite{t41}.

\begin{table}[ht]
\caption{Exceptional Lie algebras, $d_\ap=2$}   \label{table:d=2e}
\begin{center}
\begin{tabular}{>{$}c<{$} |>{$}c<{$} >{$}c<{$}| >{$}c<{$} >{$}c<{$} >{$}c<{$} >{$}c<{$}  |>{$}c<{$} }
\g,\ap & \gamma_\ap(1) & \gamma_\ap(2)  & q_\ap(1) & q_\ap(2) & h^* & r_\ap &  \g_0
  \\ \hline\hline
\GR{E}{6},\ap_2  \rule{0pt}{2.4ex} & 9/24 & 6/24 & 6 & 3 & 12 & 1 &  \GR{A}{5}\times \GR{A}{1}\\  
\GR{E}{6},\ap_6 \rule{0pt}{2.4ex} & 11/24 & 2/24 & 10 & 1 & 12 & 1 &  \GR{A}{5}\times \GR{A}{1}\\  
\GR{E}{7},\ap_2 \rule{0pt}{2.4ex} & 13/36 & 10/36 & 8 & 5 & 18 & 1 & \GR{D}{6}\times \GR{A}{1}\\  
\GR{E}{7},\ap_6 \rule{0pt}{2.4ex} & 17/36 & 2/36 & 16 & 1 & 18 & 1 & \GR{D}{6}\times \GR{A}{1} \\  
\GR{E}{7},\ap_7 \rule{0pt}{2.4ex} & 14/36 & 8/36 & 10 & 4 & 18 & 1 &  \GR{A}{7} \\ 
\GR{E}{8},\ap_1 \rule{0pt}{2.4ex} & 29/60 & 2/60 & 28 & 1 & 30 & 1 & \GR{E}{7}\times \GR{A}{1} \\ 
\GR{E}{8},\ap_7 \rule{0pt}{2.4ex} & 23/60 & 14/60 & 16 & 7 & 30 & 1 & \GR{D}{8} \\
\GR{F}{4},\ap_1 \rule{0pt}{2.4ex} & 11/36 & 14/36 & 4 & 7 & 9 & 2 & \GR{B}{4} \\ 
\GR{F}{4},\ap_4 \rule{0pt}{2.4ex} & 4/9 & 1/9 & 7 & 1 & 9 & 1 & \GR{C}{3}\times \GR{A}{1}   \\ 
\GR{G}{2},\ap_2 \rule{0pt}{2.4ex} & 3/8 & 2/8 & 2 & 1 & 4 & 1 & \GR{A}{1}\times \GR{A}{1}  \\  \hline 
\end{tabular}
\end{center}
\end{table}

\begin{table}[ht]
\caption{Exceptional Lie algebras, $d_\ap=3$}   \label{table:d=3}
\begin{center}
\begin{tabular}{>{$}c<{$}|>{$}c<{$}>{$}c<{$}>{$}c<{$}|>{$}c<{$}>{$}c<{$}>{$}c<{$}|>{$}c<{$}>{$}c<{$}}
\g,\ap & \gamma_\ap(1) & \gamma_\ap(2) & \gamma_\ap(3) & q_\ap(1) & q_\ap(2) & q_\ap(3) & h^* & r_\ap \\ \hline\hline
\GR{E}{6}, \ap_3 &  7/24 & 6/24 & 3/24 & 3 & 3 &1 & 12 & 1 \\
\GR{E}{7}, \ap_3 &  5/18 & 4/18 & 3/18 & 4 & 4 & 2 & 18 & 1 \\
\GR{E}{7}, \ap_5 &  11/36 & 10/36 & 3/36 & 5 & 5 & 1 & 18 & 1 \\
\GR{E}{8}, \ap_2 &  19/60 & 18/60 & 3/60 & 9 & 9 & 1 & 30 & 1 \\
\GR{E}{8}, \ap_8 &  17/60 & 14/60 & 9/60 & 7 & 7 & 3 & 30 & 1 \\
\GR{F}{4}, \ap_3 &  5/18 &  4/18 & 3/18 & 2 & 2 & 1 & 9 & 1 \\
\GR{G}{2}, \ap_1 &  5/24 & 2/24 & 9/24 & 1 & 1& 3 & 4 & 3 \\ \hline
\end{tabular}
\end{center}
\end{table}
\begin{table}[ht]
\caption{Exceptional Lie algebras, $d_\ap=4$}   \label{table:d=4}
\begin{center}
\begin{tabular}{>{$}c<{$}|>{$}c<{$}>{$}c<{$}>{$}c<{$}>{$}c<{$}|>{$}c<{$}>{$}c<{$}>{$}c<{$}>{$}c<{$}|>{$}c<{$}>{$}c<{$}}
\g,\ap  & \gamma_\ap(1) & \gamma_\ap(2) & \gamma_\ap(3) & \gamma_\ap(4) & q_\ap(1) & q_\ap(2) & q_\ap(3) 
 & q_\ap(4) & h^* & r_\ap \\ \hline\hline
\GR{E}{7}, \ap_4 &  4/18 & 4/18 & 3/18 & 2/18 & 2 &3 &2 & 1 & 18 & 1 \\
\GR{E}{8}, \ap_3 &  7/30 & 6/30 & 6/30 & 2/30 & 4 & 5 & 4 & 1 & 30 & 1 \\
\GR{E}{8}, \ap_6 &  13/60 & 14/60 & 9/60 & 8/60 & 3 & 5 & 3 & 2 & 30 & 1 \\
\GR{F}{4}, \ap_2 &  7/36 &  10/36 & 3/36 & 8/36 & 1 & 3 & 1 & 2 & 9 & 2 \\  \hline
\end{tabular}
\end{center}
\end{table}

\begin{table}[ht]
\caption{Exceptional Lie algebras, $d_\ap\ge 5$}   \label{table:d>5}
\begin{center}
\begin{tabular}{>{$}c<{$}|>{$}c<{$}>{$}c<{$}>{$}c<{$}>{$}c<{$}>{$}c<{$}>{$}c<{$}|>{$}c<{$}>{$}c<{$}}
\g,\ap  & \gamma_\ap(1) & \gamma_\ap(2) & \gamma_\ap(3) & \gamma_\ap(4) & \gamma_\ap(5) & \gamma_\ap(6) &  q_\ap(i) & d_\ap \\ \hline\hline
\GR{E}{8}, \ap_4 &  11/60 & 10/60 & 9/60 & 8/60 & 5/60 & - & 2,3,3,2,1 & 5  \\
\GR{E}{8}, \ap_5 &  9/60 & 10/60 & 9/60 & 8/60 & 5/60 & 6/60 & 1,2,2,2,1,1 & 6 \\ \hline
\end{tabular}
\end{center}
\end{table}

\vskip1ex
{\small {\bf Acknowledgements.}
Part of this work was done while it was still possible for me to enjoy a friendly environment 
of the Max-Planck-Institut f\"ur Mathematik (Bonn).

\end{document}